\numberwithin{equation}{section}
\theoremstyle{plain}
\newtheorem{thm}{Theorem}[section]
\newtheorem{lem}[thm]{Lemma}
\newtheorem{cor}[thm]{Corollary}
\newtheorem{prop}[thm]{Proposition}
\theoremstyle{definition}
\newtheorem{defn}[thm]{Definition}
\newtheorem{rem}[thm]{Remark}
\newtheorem{?}[thm]{Problem}
\theoremstyle{definition}
\newtheorem*{nt*}{Notation}
\theoremstyle{plain}
\newtheorem{asu}{Assumption}[section]
\newcommand{\inner}[2]{\left\langle {#1}, {#2} \right\rangle}
\newcommand{\norm}[1]{\left\lVert {#1} \right\rVert}
\newcommand{\abs}[1]{\left\lvert {#1} \right\rvert}
\newcommand{\calC}{\mathcal C}
\newcommand{\calE}{\mathcal E}
\newcommand {\C} {\mathbb C}
\newcommand {\R} {\mathbb R}
\newcommand {\Z} {\mathbb Z}
\begin{document}

\title[]{The gradient descent method from the perspective of fractional calculus}

\author{Pham Viet Hai}%
\address[P. V. Hai]{Faculty of Mathematics, Mechanics and Informatics, University of Science, Vietnam National University, Hanoi, Vietnam.}%
\address[P. V. Hai]{Thang Long Institute of Mathematics and Applied Sciences, Thang Long University, Hanoi, Vietnam.}
\email{phamviethai@hus.edu.vn}

\author{Joel A. Rosenfeld}
\address[Joel A. Rosenfeld]{Department of Mathematics and Statistics, University of South Florida, Tampa, Florida.}
\email{rosenfeldj@usf.edu}

\subjclass[2010]{90C25}

\keywords{gradient method, fractional calculus}

\begin{abstract}
Motivated by gradient methods in optimization theory, we give methods based on $\psi$-fractional derivatives of order $\alpha$ in order to solve unconstrained optimization problems. The convergence of these methods is analyzed in detail. This paper also presents an Adams-Bashforth-Moulton (ABM) method for the estimation of solutions to equations involving $\psi$-fractional derivatives. Numerical examples using the ABM method show that the fractional order $\alpha$ and weight $\psi$ are tunable parameters, which can be helpful for improving the performance of gradient descent methods.
\end{abstract}

\maketitle

\section{Introduction}
\subsection{Optimization Problems}
Given a continuously differentiable function $f:\R^d\to\R$, we are interested in solving the unconstrained optimization problem:
\begin{equation}\label{op-pro}
\min_{x\in\R^d}f(x),
\end{equation}
where $d\in\Z_{\geq 0}$. From classical multivariate calculus, if the objective function $f(\cdot)$ attains its local minimum at $y_*$, then $y_*$ is a \emph{stationary point}, that is
\begin{equation}\label{op-condition}
\nabla f(y_*)=0.
\end{equation}
Furthermore, if it is also assumed that $f(\cdot)$ is convex, then any stationary point must be a global minimum of $f(\cdot)$. Thus, \eqref{op-pro} may be reduced to the problem of finding stationary points. If the description of the gradient $\nabla f(\cdot)$ is simple, one can seek all stationary points and, among them, all global minima, by solving \eqref{op-condition}. However, in practice solving \eqref{op-condition} may be difficult.

Gradient descent is the modal first-order iterative algorithm for solving \eqref{op-pro}. Low computational complexity makes gradient descent an ideal algorithm for large-scale problems with medium accuracy. Gradient descent originates from the observation that if the objective function, $f(\cdot)$, is defined and differentiable in a neighborhood of a point $\omega$, then the direction of greatest decrease from $\omega$ is the negative gradient of $f$ at $\omega$. With this observation in mind, the gradient descent algorithm starts with a prediction $x_0$ for a local minimum of $f(\cdot)$, and constructs the sequence $x(0),x(1),x(2),\cdots$ such that
\begin{equation}\label{diff-eq-dis}
x(k+1)=x(k)-\beta(k)\nabla f(x(k)),\quad x(0)=x_0\in\R^d,
\end{equation}
where the \emph{step size} $\beta:\Z_{\geq 0}\to\R_{\geq 0}$ is allowed to vary at every iteration $k$. The result of the gradient descent algorithm is a monotonic sequence
$$
f(x(0))\geq f(x(1))\geq f(x(2))\geq\cdots
$$
and it is expected that the sequence $\{x(k)\}_{k=0}^\infty$ converges to a local minimum. In practice, the stopping criterion is usually of the form $\norm{\nabla f(x)}\leq\varepsilon$, where $\epsilon > 0$. More details can be found in the monographs \cite{BV, YN, ruszczynski2006nonlinear}.

Rearranging \eqref{diff-eq-dis} results as
$$
\dfrac{x(k+1)-x(k)}{\beta(k)}=-\nabla f(x(k)).
$$
Hence, \eqref{diff-eq-dis} may be viewed as the discretization of the ordinary differential equation
\begin{equation}\label{GM-ODE}
y'(t)=-\nabla f(y(t)),\quad y(0)=y_0\in\R^d,
\end{equation}
by the explicit Euler scheme with step size $\beta(k)$. The system \eqref{GM-ODE} is called \emph{continuous gradient method}. The analogy between difference equations and differential equations has been recognized and exploited very regularly. Many results related to difference equations can carry over to corresponding results for differential equations and vice versa. For instance, solutions of equations \eqref{diff-eq-dis} and \eqref{GM-ODE} converge to the unique stationary point at an exponential rate under the assumption that $f(\cdot)$ is both strongly convex and smooth \cite{ruszczynski2006nonlinear, scieur2017integration}. Furthermore, if a convergence result is proved for a continuous method, then various finite difference schemes for the solution of this Cauchy problem may be constructed.

Although in many applications of optimization the discrete time method is modally employed, continuous gradient methods are employed frequently in the design of feedback controllers for nonlinear dynamical systems. These methods are combined with Lyapunov techniques to give theoretical guarantees for the stability of closed loop control systems (cf. \cite{dixon2013nonlinear}), and in the case of an unconstraint affine-quadratic regulator, the optimal controller is expressed in terms of the negative gradient of the optimal value function (cf. \cite[Equation 1.13]{kamalapurkar2018reinforcement}).

The advantages of fractional calculus (FC) have been recognized and exploited very regularly by many researchers. Although in the early stage FC was considered as a branch of pure mathematics, in recent decades it spreads into many other areas of science. Applications to viscoelastic models and connections to anomalous relaxation and rheological models can be found in  \cite{gao2017general, yang2017new, zbMATH07051303, yang2019newgeneralcalculi, yang2019newnonconventional, yang2019newmathematicalmodels, yang2017newlocal, zbMATH07092994}. The present manuscript employs methods from fractional order calculus to improve the convergence rate of continuous time optimization methods. Improvements in convergence rates over continuous time gradient methods using classical calculus are realized through Mittag-Leffler convergence rates using $\psi-$fractional derivatives.

\subsection{Fractional calculus}
While inquiry into the FC dates back to the inception of differential calculus, serious study and rigorous definitions did not appear until the ninteenth and twentieth centuries. 
Several monographs and surveys describe the progress in the area of FC. Prominent are the books \cite{diethelm2010analysis, kilbas2006north} as well as the survey \cite{valerio2013fractional}, which includes a historical review and notes concerning scientists that contributed to the development of FC. Some more recent work concerning fractional calculus can be found in \cite{rosenfeld2017approximating,yang2019newnonconventional,yang2019newgeneralcalculi,yang2017new,liu2020generalized, yang2017exactlocal, yang2018newcomputational, yang2015new}.

The following are distinct approaches to FC by Riemann-Liouville and Hadamard.
\begin{defn}\label{def-1}
Given $0<\alpha\leq 1$, the \emph{Riemann-Liouville integral} of a function $x$ is defined as
$$
I_a^\alpha x(t)\triangleq\dfrac{1}{\Gamma(\alpha)}\int\limits_a^t (t-\tau)^{\alpha-1}x(\tau)\,d\tau,
$$
where $\Gamma(\alpha)\triangleq\int\limits_0^\infty\tau^{\alpha-1}e^{-\tau}\,d\tau$. The \emph{Riemann-Liouville and Caputo fractional derivatives} are defined as, respectively
$$
{}^{RL}\!D_a^\alpha\triangleq\frac{d}{dt}\circ I_a^{1-\alpha}\text{, and } {}^C\!D_a^\alpha\triangleq I_a^{1-\alpha}\circ\frac{d}{dt}.
$$
\end{defn}

\begin{defn}\label{def-2}
Given $0<\alpha\leq 1$, the \emph{Hadamard integral} of a function $x$ is defined as
$$
H_1^\alpha x(t)\triangleq\dfrac{1}{\Gamma(\alpha)}\int\limits_1^t \left(\ln\dfrac{t}{\tau}\right)^{\alpha-1}x(\tau)\,\dfrac{d\tau}{\tau}.
$$
The \emph{Hadamard fractional derivative} is defined as
$$
{}^H\!D_1^\alpha\triangleq t\frac{d}{dt}\circ H_1^{1-\alpha}.
$$
\end{defn}

In \cite{chen2017study, wei2020generalization}, a fractional order gradient method was developed. Owing to the ``memory'' characteristic of fractional and nonlocal derivatives, the gradient method designed in this way does not converge to the real extreme point. To overcome the disadvantage on memory of fractional derivatives, three viable solutions were developed, containing the fixed memory step, the higher order truncation, and the variable fractional order. Numerical examples in \cite{chen2017study, wei2020generalization} show that all of the developed methods converge quickly.

In \cite{liang2019fractional}, Liang et al. developed a fractional differential equation, which generalizes equation \eqref{GM-ODE}. Specifically, these authors replaced the usual derivative $y'(\cdot)$ in \eqref{GM-ODE} with Caputo fractional derivatives ${}^C\!D_a^\alpha$, where $0<\alpha<1$. The fractional order is a adjustable parameter, which can be helpful for improving the performance. Both theoretical analysis and numerical experiments in \cite{liang2019fractional} reveal that this fractional differential equation may possess faster or slower convergence rate than equation \eqref{GM-ODE}, depending on specific problems. We emphasize that the paper \cite{liang2019fractional} deals only with Caputo fractional derivatives and the cases of Riemann-Liouville or Hadamard are not considered.

\subsection{Aim and Content}
Recently, several concepts of fractional derivatives have been proposed, studied and applied to practical problems. In \cite{almeida2017caputo}, the definition of the \emph{$\psi$-fractional derivatives} unifies a large class of fractional derivatives. Leveraging the $\psi$-derivative, the present manuscript provides fractional differential equations that resolve a class of unconstrained optimization problems. Significantly, the convergence rate of these differential equations is analyzed in Section \ref{sec-RL} and Section \ref{caputo-type-sec}. Convergence rates outperforming integer order gradient descent methods are obtained in Theorem \ref{1} through the selection of particular $\psi$. In the case of non-strongly convex $f(\cdot)$, a convergence rate of $O(\psi(t)^{-\lambda})$ is obtained. Subsequently, under the assumption on the strong convexity, Mittag-Leffler convergence, a general type of exponential convergence, is demonstrated, and conditions when an exponential convergence occurs are also established.

The rest of the paper is organized as follows. Section \ref{pre} is devoted to recalling basic knowledge of convex analysis and $\psi$-fractional derivatives. What makes these $\psi$-fractional derivatives interesting is the fact that they are really generalizations of the well-known concepts, including Definition \ref{def-1} and \ref{def-2}. Section \ref{sec-auxi} contains several technical observations which will be later on referred to, including the chain rule, and a Jensen-type inequality. Motivated by gradient methods in optimization theory, in Section \ref{sec-RL} we give fractional differential equations of Riemann-Liouville type to solve unconstrained optimization problems. In parallel, we study fractional differential equations of Caputo type in Section \ref{caputo-type-sec}. Section \ref{de-num-me} gives a generalization of the ABM method of \cite{diethelm2010analysis} for $\psi$-Caputo derivatives, and this numerical method is utilized to implement the optimization procedures of this manuscript. The paper concludes with suggestions of future research in Section \ref{conl-sec}.

\subsection{Notations}
Throughout the paper, we denote by $\Z$, $\R$, $\C$ by the set of integers, real numbers, complex numbers, respectively. For a set $A\subseteq\R$, $A_{\geq\delta}$ stands for the set $\{x\in A: x\geq\delta\}$. 
Let $\R^d$ be the set of real $d$-dimensional vectors endowed with the Euclidean inner product $\inner{\cdot}{\cdot}$ and the standard Euclidean norm $\|\cdot\|$. The gradient of the function $f(\cdot)$ at $x$ is denoted as $\nabla f(x)$. For two symmetric matrices $A,B\in\R^{d\times d}$, writing $A\preceq B$ means that $\langle Ax,x\rangle\leq \langle Bx,x\rangle$ for all $x\in\R^d$. 

\section{Preliminaries}\label{pre}
\subsection{Convex analysis}
This section provides an exposition on convex functions that will be leveraged in the sequel.
\begin{defn}
A function $f:\R^d\to\R$ is of class $\calC^m$ for $m\in\Z_{\geq 0}$ (written $f(\cdot)\in \calC^m$) if the partial derivatives
$$
\dfrac{\partial^{m_1+\cdots +m_d} f}{\partial x_1^{m_1}\cdots\partial x_d^{m_d}}
$$
all exist and are continuous for every selection $m_1,\ldots, m_d$ with $m_1+\cdots+m_d\leq m$.
\end{defn}

Several assumptions concerning the objective function will be used, including Lipschitz conditions on the gradient, which may be achieved by bounding the Hessian of a twice differentiable function, convexity of the objective function, and a uniform lower bound on the Hessian. These assumptions are summarized in Assumption \ref{asu-f}. Assumption \ref{Sfne0} is also included to ensure that the optimization problem is well posed.
\begin{asu}\label{asu-f}
The function $f:\R^d\to\R$ is
\begin{enumerate}
\item of class $\calC^1$ and $L_f$-smooth, i.e.
$$
\norm{\nabla f(x)-\nabla f(y)}\leq L_f\norm{x-y},\quad\forall x,y\in\R^d.
$$
\item of class $\calC^1$ and convex, i.e.
$$
f(x)\geq f(y)+\langle \nabla f(y),x-y\rangle,\quad\forall x,y\in\R^d.
$$
\item of class $\calC^2$ and there exists a constant $m_f>0$ such that
\[
m_f I_d\preceq\nabla^2f(x),\quad\forall x\in\R^d.
\]
\end{enumerate}
\end{asu}

\begin{asu}\label{Sfne0}
The set $S(f)\triangleq\{z\in\R^d: \nabla f(z)=0\}\ne\emptyset$.
\end{asu}

\begin{defn}
A function $f \in \calC^1$ is \emph{strongly convex} with parameter $m_f>0$ if
\begin{equation}\label{str-conv-2}
f(x)\geq f(y)+\langle \nabla f(y),x-y\rangle+\frac{m_f}{2}\|x-y\|^2,\quad\forall x,y\in\R^d.
\end{equation}
\end{defn}

Convexity has several useful consequences.
\begin{lem}[{\cite[Ineq. (2.1.7)-(2.1.9)]{YN}}]
If the function $f(\cdot)$ is convex and $M_f$-smooth, then
\begin{equation}\label{str-conv-1}
f(x)+\inner{\nabla f(x)}{y-x}+\dfrac{1}{2M_f}\norm{\nabla f(x)-\nabla f(y)}^2\leq f(y),\quad\forall x,y\in\R^d,
\end{equation}
\begin{equation}\label{Nesterov-ineq-218}
\langle \nabla f(x)-\nabla f(y),x-y\rangle\geq\dfrac{1}{M_f}\|\nabla f(x)-\nabla f(y)\|^2,\quad\forall x,y\in\R^d,
\end{equation}
and
\begin{equation}\label{Nesterov-ineq-219}
\inner{\nabla f(x)-\nabla f(y)}{x-y}\leq L\norm{x-y}^2,\quad\forall x,y\in\R^d.
\end{equation}
\end{lem}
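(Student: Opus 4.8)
The plan is to derive all three inequalities from the single quadratic upper bound furnished by $M_f$-smoothness, the so-called descent lemma. First I would establish that smoothness implies
\[
f(y)\leq f(x)+\inner{\nabla f(x)}{y-x}+\frac{M_f}{2}\norm{y-x}^2,\quad\forall x,y\in\R^d,
\]
by writing $f(y)-f(x)-\inner{\nabla f(x)}{y-x}=\int_0^1\inner{\nabla f(x+t(y-x))-\nabla f(x)}{y-x}\,dt$ and bounding the integrand through Cauchy--Schwarz together with the Lipschitz hypothesis $\norm{\nabla f(x)-\nabla f(y)}\leq M_f\norm{x-y}$.

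The crux is inequality \eqref{str-conv-1}, and I would prove it by an auxiliary-function device. Fix $x$ and set $\phi(z)\triangleq f(z)-\inner{\nabla f(x)}{z}$. Since $\phi$ differs from $f$ by a linear term, it is convex and $M_f$-smooth, and its gradient $\nabla\phi(z)=\nabla f(z)-\nabla f(x)$ vanishes at $z=x$; hence $x$ is a global minimizer of $\phi$. Applying the descent lemma to $\phi$ and then minimizing the right-hand side over the displacement (the minimum of $\phi(y)+\inner{\nabla\phi(y)}{z-y}+\frac{M_f}{2}\norm{z-y}^2$ over $z$ is attained at $z=y-\frac{1}{M_f}\nabla\phi(y)$) yields
\[
\phi(x)\leq\phi(y)-\frac{1}{2M_f}\norm{\nabla\phi(y)}^2.
\]
Unwinding the definition of $\phi$ and rearranging reproduces \eqref{str-conv-1}.

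With \eqref{str-conv-1} in hand, the remaining two inequalities are quick. For \eqref{Nesterov-ineq-218} I would write \eqref{str-conv-1} once as stated and once with $x$ and $y$ interchanged, add the two, and cancel $f(x)+f(y)$; using the identity $\inner{\nabla f(x)}{y-x}+\inner{\nabla f(y)}{x-y}=-\inner{\nabla f(x)-\nabla f(y)}{x-y}$, the sum collapses exactly to the co-coercivity estimate \eqref{Nesterov-ineq-218}. Inequality \eqref{Nesterov-ineq-219} is immediate from Cauchy--Schwarz followed by the Lipschitz bound on $\nabla f$, with $L$ playing the role of the gradient's Lipschitz constant. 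The only genuinely delicate step is the proof of \eqref{str-conv-1}: the descent lemma alone gives only an upper bound on $f(y)$, and extracting the lower bound involving $\norm{\nabla f(x)-\nabla f(y)}^2$ requires exploiting convexity through the global minimality of the shifted function $\phi$ rather than any direct manipulation.
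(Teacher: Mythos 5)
Your proposal is correct, and it is essentially the canonical argument: the paper gives no proof of this lemma, citing Nesterov's book instead, and your derivation (descent lemma, then the auxiliary function $\phi(z)=f(z)-\inner{\nabla f(x)}{z}$ minimized at $x$ to get \eqref{str-conv-1}, then symmetrization for \eqref{Nesterov-ineq-218} and Cauchy--Schwarz plus Lipschitz continuity for \eqref{Nesterov-ineq-219}) is precisely the proof found in that cited source. Nothing is missing; your remark that $L$ in \eqref{Nesterov-ineq-219} should be read as the gradient's Lipschitz constant $M_f$ correctly resolves the paper's notational slip.
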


\subsection{Fractional calculus}
In this section, some basic definitions and techniques related to $\psi$-fractional calculus are presented. To simplify notation, we denote
$$
J_\psi\triangleq \dfrac{1}{\psi'(t)}\cdot\dfrac{d}{dt}.
$$

\begin{defn}[\cite{almeida2017caputo}]\label{defn-frac-deri}
Let $0<\alpha\leq 1$. The \emph{$\psi$-Riemann-Liouville integral} of a function $x$ is defined by
$$
(I_{a,\psi}^\alpha x)(t)\triangleq\dfrac{1}{\Gamma(\alpha)}\int\limits_a^t (\psi(t)-\psi(\tau))^{\alpha-1}\psi'(\tau)x(\tau)\,d\tau.
$$
The \emph{$\psi$-Riemann-Liouville and $\psi$-Caputo fractional derivatives} are defined as, respectively
$$
{}^{RL}\!D_{a,\psi}^\alpha\triangleq J_\psi\circ I_{a,\psi}^{1-\alpha}\text{ and }^C\!D_{a,\psi}^\alpha\triangleq I_{a,\psi}^{1-\alpha}\circ J_\psi.
$$
\end{defn}

\begin{defn}[\cite{almeida2017caputo}]
The \emph{$\psi$-Riemann-Liouville and $\psi$-Caputo fractional derivatives} of a $d$-dimensional vector function $\mathbf{x}(t)=(x_1(t),\cdots,x_d(t))^T$ are defined component-wise as
$$
{}^{RL}\!D_{a,\psi}^\alpha\mathbf{x}(t)\triangleq({}^{RL}\!D_{a,\psi}^\alpha x_1(t),\cdots,{}^{RL}\!D_{a,\psi}^\alpha x_d(t))^T,
$$
$$
{}^C\!D_{a,\psi}^\alpha\mathbf{x}(t)\triangleq({}^C\!D_{a,\psi}^\alpha x_1(t),\cdots,{}^C\!D_{a,\psi}^\alpha x_d(t))^T,
$$
respectively.
\end{defn}

\begin{rem}
Several well known fractional derivatives manifest as particular cases of $\psi$-fractional derivatives. For example, given appropriate choices of the kernel, $\psi$, we obtain the Caputo fractional derivative (when $\psi(t)=t$) and Hadamard fractional derivative (when $\psi(t)=\ln t$).
\end{rem}

In \cite{almeida2018fractional}, Almeida et al. investigated fractional differential equations with the $\psi$-Caputo derivative:
\begin{equation}\label{FO-DE}
{}^C\!D_{a,\psi}^\alpha x(t)=g(t,x(t)),\quad t\geq a,\quad x(a)=x_0,
\end{equation}
where $\alpha\in (0,1]$, and $g:\R_{\geq 0}\times\R^d\to\R^d$ is a continuous vector-valued function. The results regarding the existence and uniqueness of solutions of \eqref{FO-DE} were established by using fixed point theorems, which agrees with the approach for establishing existence and uniqueness in the classical setting (cf. \cite{coddington1955theory}).

\begin{defn}
\normalfont
A continuous function $x:\R_{\geq 0}\to\R^d$ is called a \emph{solution} of equation \eqref{FO-DE} if it satisfies this equation for every $t\geq a$. In this case, $x_0$ is called the \emph{initial value} of the solution $x(\cdot)$.
\end{defn}

A closely related fractional differential equation to \eqref{FO-DE} arises from the Riemann-Liouville setting,
\begin{equation}\label{FO-DE-RL}
{}^{RL}\!D_{a,\psi}^\alpha y(t)=g(t,y(t)),\quad t\geq a,\quad I_{a,\psi}^{1-\alpha}y(a+)=y_0,
\end{equation}
where a function is called a solution to \eqref{FO-DE-RL} if it satisfies the equation for all $t > a$ as was the case for \eqref{FO-DE}.

\begin{prop}\label{inte-eq-equiv}
Let $0<\alpha\leq 1$. Then the following assertions hold.
\begin{enumerate}
\item A continuous function $x(\cdot)$ is a solution of equation \eqref{FO-DE} if and only if it satisfies
$$
x(t)=x(a)+\dfrac{1}{\Gamma(\alpha)}\int\limits_a^t(\psi(t)-\psi(s))^{\alpha-1}\psi'(s)g(s,x(s))\,ds,\quad t\geq a.
$$
\item A continuous function $y(\cdot)$ is a solution of equation \eqref{FO-DE-RL} if and only if it satisfies
$$
y(t)=\dfrac{y_0}{\Gamma(\alpha)}(\psi(t)-\psi(a))^{\alpha-1}+\dfrac{1}{\Gamma(\alpha)}\int\limits_a^t(\psi(t)-\psi(s))^{\alpha-1}\psi'(s)g(s,y(s))\,ds,\quad t\geq a.
$$
\end{enumerate}
\end{prop}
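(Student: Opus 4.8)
The plan is to reduce both equivalences to their classical counterparts (the case $\psi(t)=t$) by a change of variables, and then transport the resulting integral equations back to the $\psi$-setting. I will use throughout the standing hypotheses on the weight, namely that $\psi\in\calC^1$ is strictly increasing, so that $\psi^{-1}$ exists and is differentiable; these are exactly the conditions under which $J_\psi$ and $I_{a,\psi}^\alpha$ are well defined.

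First I would record the substitution rule. Setting $u=\psi(t)$ and $v=\psi(s)$, and writing $X\triangleq x\circ\psi^{-1}$, the relation $dv=\psi'(s)\,ds$ turns the $\psi$-Riemann--Liouville integral into the ordinary one,
$$
(I_{a,\psi}^\alpha x)(t)=\frac{1}{\Gamma(\alpha)}\int_{\psi(a)}^{u}(u-v)^{\alpha-1}X(v)\,dv=(I_{\psi(a)}^\alpha X)(u),
$$
while the chain rule gives $(J_\psi x)(t)=X'(u)$. Composing these two observations yields ${}^C\!D_{a,\psi}^\alpha x(t)=({}^C\!D_{\psi(a)}^\alpha X)(u)$ and ${}^{RL}\!D_{a,\psi}^\alpha x(t)=({}^{RL}\!D_{\psi(a)}^\alpha X)(u)$; in other words, the $\psi$-operators of Definition \ref{defn-frac-deri} are conjugated to the standard operators of Definition \ref{def-1} by the map $x\mapsto x\circ\psi^{-1}$.

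With this dictionary in hand, equation \eqref{FO-DE} becomes the classical Caputo problem ${}^C\!D_{\psi(a)}^\alpha X(u)=G(u,X(u))$ with $X(\psi(a))=x_0$, where $G(u,\xi)\triangleq g(\psi^{-1}(u),\xi)$, and equation \eqref{FO-DE-RL} becomes the classical Riemann--Liouville problem with transformed initial condition $(I_{\psi(a)}^{1-\alpha}Y)(\psi(a)+)=y_0$. For each of these I would invoke the known equivalence between a fractional Cauchy problem and its Volterra integral equation (cf.\ \cite{diethelm2010analysis, kilbas2006north}). Undoing the substitution $u=\psi(t)$, $v=\psi(s)$ in the two integral equations then produces precisely the formulas claimed, the singular prefactor $(\psi(t)-\psi(a))^{\alpha-1}$ in part (2) being the image of the classical kernel $(u-\psi(a))^{\alpha-1}$.

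I expect the point requiring the most care to be the Riemann--Liouville case, where both implications must reproduce the initial data correctly. In the backward direction one applies ${}^{RL}\!D_{a,\psi}^\alpha$ to the candidate solution and must verify both that the singular term lies in the kernel of ${}^{RL}\!D_{a,\psi}^\alpha$ and that $I_{a,\psi}^{1-\alpha}$ of it returns $y_0$ as $t\to a+$; after the change of variables these reduce to the standard power-function computations ${}^{RL}\!D_{\psi(a)}^\alpha(u-\psi(a))^{\alpha-1}=0$ and $I_{\psi(a)}^{1-\alpha}(u-\psi(a))^{\alpha-1}=\Gamma(\alpha)$, which follow from the Beta-integral evaluation of fractional integrals of powers. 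The Caputo case is comparatively routine and can in fact be handled directly, without passing to the classical setting: applying $I_{a,\psi}^\alpha$ to \eqref{FO-DE} and using the semigroup identity $I_{a,\psi}^\alpha\circ I_{a,\psi}^{1-\alpha}=I_{a,\psi}^1$ together with $I_{a,\psi}^1(J_\psi x)(t)=x(t)-x(a)$ gives the forward implication, while the backward implication uses the same identities plus the vanishing of $(I_{a,\psi}^\alpha g)(a+)$ for continuous $g$.
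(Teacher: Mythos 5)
Your proposal is correct, but it takes a genuinely different route from the paper. The paper's proof is essentially a citation: item (1) is quoted from \cite[Theorem 2]{almeida2018fractional}, and item (2) is deduced by applying, directly in the $\psi$-setting, the $\psi$-analogue of the fundamental theorem of fractional calculus, namely the identity $I_{a,\psi}^\alpha\circ{}^{RL}\!D_{a,\psi}^\alpha y(t)=y(t)-\frac{1}{\Gamma(\alpha)}(I_{a,\psi}^{1-\alpha}y)(a+)\,[\psi(t)-\psi(a)]^{\alpha-1}$ from \cite{jarad2019generalized}. You instead observe that the map $x\mapsto x\circ\psi^{-1}$ conjugates the operators of Definition \ref{defn-frac-deri} to the classical operators of Definition \ref{def-1}, transport both Cauchy problems to the classical Caputo and Riemann--Liouville settings, invoke the standard equivalence with Volterra integral equations \cite{diethelm2010analysis,kilbas2006north}, and pull the resulting integral equations back through the substitution $u=\psi(t)$. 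Both arguments are sound. What the paper's route buys is brevity and consistency with the $\psi$-fractional toolkit used throughout (the same identity from \cite{jarad2019generalized} recurs in the convergence proofs of Sections \ref{sec-RL} and \ref{caputo-type-sec}). What your route buys is structural transparency: the conjugation explains \emph{why} every classical statement has a $\psi$-counterpart --- it is precisely the mechanism underlying results such as \cite[Theorem 2]{almeida2018fractional} --- and your direct semigroup argument for the Caputo case (using $I_{a,\psi}^\alpha\circ I_{a,\psi}^{1-\alpha}=I_{a,\psi}^1$ and $I_{a,\psi}^1(J_\psi x)(t)=x(t)-x(a)$) in effect reproves the cited theorem rather than assuming it. Two points do require the care you flag: the dictionary needs exactly Assumption \ref{asu-psi}(1), so that $\psi^{-1}$ is $\calC^1$ and the initial conditions transport correctly ($t\to a+$ iff $u\to\psi(a)+$); and in the Riemann--Liouville case the classical equivalence must be invoked in the weighted class allowing the blow-up $(u-\psi(a))^{\alpha-1}$ at the left endpoint, with your power-function computations ${}^{RL}\!D_{\psi(a)}^\alpha(u-\psi(a))^{\alpha-1}=0$ and $I_{\psi(a)}^{1-\alpha}(u-\psi(a))^{\alpha-1}=\Gamma(\alpha)$ ensuring the initial datum $y_0$ is reproduced in both directions.
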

\begin{proof}
The first item was proved in \cite[Theorem 2]{almeida2018fractional}. Meanwhile, the second one can be obtained by using
$$
I_{\ell,\psi}^\alpha\circ{}^{RL}\!D_{\ell,\psi}^\alpha y(t)=y(t)-\dfrac{1}{\Gamma(\alpha)}I_{\ell,\psi}^{1-\alpha}y(a+)[\psi(t)-\psi(\ell)]^{\alpha-1}.
$$
\end{proof}

Similar to the exponential function frequently used in the solutions of integer-order systems, a function frequently used in fractional calculus is the \emph{Mittag-Leffler function} defined as
\[
\calE_{\alpha,\beta}(z)\triangleq\sum\limits_{j\geq 0}\dfrac{z^j}{\Gamma(j\alpha+\beta)},\quad z\in\C,
\]
where $\alpha,\beta>0$. In order to simplify notation, we write $\calE_\alpha(\cdot)$ in stead of $\calE_{\alpha,1}$. The following propositions gather some well-known inequalities on fractional calculus, which are required in later proofs.

\begin{prop}[\cite{schneider1996completely}]\label{E-CM}
The function $\calE_{\alpha,\beta}(-t)$ is completely monotone, that is
\[
(-1)^m\dfrac{d^m}{dt^m}\calE_{\alpha,\beta}(-t)\geq 0,\quad\forall t\geq 0,\forall m\in\Z_{\geq 0},
\]
if and only if $0<\alpha\leq 1$, $\beta\geq\alpha$.
\end{prop}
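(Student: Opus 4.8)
The plan is to route everything through Bernstein's theorem: a function $g$ on $[0,\infty)$ obeys $(-1)^m g^{(m)}(t)\ge 0$ for all $t\ge 0$ and all $m\in\Z_{\geq 0}$ if and only if it is the Laplace transform of a nonnegative Radon measure, and this representing measure is unique. Accordingly I would recast the proposition as the single statement that
\[
\calE_{\alpha,\beta}(-t)=\int_0^\infty e^{-rt}\,K_{\alpha,\beta}(r)\,dr,\qquad t\ge 0,
\]
holds for a density $K_{\alpha,\beta}$ that is nonnegative on $(0,\infty)$ exactly when $0<\alpha\le 1$ and $\beta\ge\alpha$.

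For the sufficiency direction I would produce $K_{\alpha,\beta}$ explicitly. Starting from the Hankel-contour representation of $\calE_{\alpha,\beta}$ and collapsing the contour onto the branch cut along the negative real axis (equivalently, inverting the Laplace transform of $\calE_{\alpha,\beta}(-\,\cdot\,)$ across its cut), one recovers $K_{\alpha,\beta}$ as a boundary-value jump. Its denominator reduces to the strictly positive quantity $r^{2\alpha}+2r^\alpha\cos(\pi\alpha)+1=\lvert 1+r^\alpha e^{i\pi\alpha}\rvert^{2}$, which is positive for every $r>0$ once $0<\alpha<1$, so the whole question becomes the sign of the numerator, which is assembled from $\sin(\pi\beta)$ and $\sin(\pi(\beta-\alpha))$ weighted by powers of $r$. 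In the strip $\alpha\le\beta\le 1$ both sine factors are nonnegative and positivity is immediate; for $\beta>1$ the naive factorwise estimate fails, and I would instead bootstrap from the base strip using a recurrence in $\beta$ that preserves complete monotonicity. The degenerate endpoint $\alpha=1$, where the density collapses to a point mass (e.g.\ $\calE_{1,1}(-t)=e^{-t}$), I would handle by a separate limiting argument.

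For the necessity I would argue by asymptotics, using only that a completely monotone function is nonnegative, bounded, and nonincreasing. If $0<\beta<\alpha\le1$, the large-$t$ expansion gives $\calE_{\alpha,\beta}(-t)\sim t^{-1}/\Gamma(\beta-\alpha)$; since $\beta-\alpha\in(-1,0)$ forces $\Gamma(\beta-\alpha)<0$, the function is eventually strictly negative, contradicting $\calE_{\alpha,\beta}(-t)\ge 0$, so $\beta\ge\alpha$ is necessary. If $\alpha>1$, the same expansion acquires oscillatory exponential terms coming from the $\alpha$-th roots that migrate into the principal sheet (for $\alpha>2$ these are even unbounded), producing sign changes incompatible with monotonicity; hence $0<\alpha\le1$ is forced.

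The principal obstacle is the sufficiency direction: establishing the representation rigorously (justifying the contour deformation, the interchange of integrals, and convergence at the origin and at infinity) and, above all, proving that the numerator of $K_{\alpha,\beta}$ never changes sign on $(0,\infty)$ with threshold exactly $\beta=\alpha$---delicate precisely because for $\beta>1$ the individual sine factors can be negative, so the argument must exploit the finer structure of the density rather than crude factorwise bounds. By comparison the necessity direction is soft, resting on the classical Mittag-Leffler asymptotics together with the elementary positivity and monotonicity forced by complete monotonicity.
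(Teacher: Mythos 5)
The first thing to note is that the paper does not prove this proposition at all: it is imported verbatim from Schneider \cite{schneider1996completely}, so your attempt can only be measured against the standard literature arguments (Pollard, Schneider, Gorenflo--Mainardi), which is indeed what your outline tries to reconstruct. Within that outline there is one genuine error, in the sufficiency direction. The density whose denominator is $r^{2\alpha}+2r^{\alpha}\cos(\pi\alpha)+1$ and whose numerator is built from $\sin(\pi\beta)$ and $\sin(\pi(\beta-\alpha))$ is \emph{not} the Bernstein density of $\calE_{\alpha,\beta}(-t)$: it is the inverse Laplace transform of $p^{\alpha-\beta}/(p^{\alpha}+1)$, i.e.\ the spectral density of the different function $t^{\beta-1}\calE_{\alpha,\beta}(-t^{\alpha})$. (Observe that $\calE_{\alpha,\beta}(-t)$ is entire in $t$, so it has no branch cut to collapse onto; the cut in your computation belongs to $p^{\alpha}$, and what the cut-collapse inverts is the transform pair above.) Nonnegativity of that density is equivalent to complete monotonicity of $t^{\beta-1}\calE_{\alpha,\beta}(-t^{\alpha})$, which holds precisely on the strip $0<\alpha\le\beta\le 1$, not on $\beta\ge\alpha$. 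Concretely, at $\alpha=\tfrac12$, $\beta=\tfrac32$ your numerator is $r^{1/2}\sin(3\pi/2)+\sin(\pi)=-r^{1/2}$, so under the same sign convention that makes the $\beta=1$ case positive, the ``density'' is strictly negative for every $r>0$; correspondingly $t^{1/2}\calE_{1/2,3/2}(-t^{1/2})=1-e^{t}\mathrm{erfc}(\sqrt{t})$ is increasing, hence not completely monotone, even though $\calE_{1/2,3/2}(-t)=(1-e^{t^{2}}\mathrm{erfc}(t))/t$ \emph{is} completely monotone, as the proposition asserts. So for $\beta>1$ it is not merely that ``the factorwise estimate fails'': the density you are analyzing is genuinely negative, and no finer study of its sign can close the argument. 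Your fallback, a CM-preserving recurrence in $\beta$, is therefore not a refinement but the load-bearing step, and it needs a base case established by other means.

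The clean repair runs as follows: (i) Pollard's classical theorem that $\calE_{\alpha}(-t)$ is completely monotone for $0<\alpha\le 1$; (ii) the identity $\calE_{\alpha,\alpha}(-t)=-\alpha\frac{d}{dt}\calE_{\alpha}(-t)$, which shows $\calE_{\alpha,\alpha}(-t)$ is completely monotone; (iii) the Beta-integral subordination
\[
\calE_{\alpha,\beta}(z)=\frac{1}{\Gamma(\beta-\alpha)}\int_0^1 (1-u)^{\beta-\alpha-1}u^{\alpha-1}\calE_{\alpha,\alpha}(zu^{\alpha})\,du,\qquad \beta>\alpha,
\]
which exhibits $\calE_{\alpha,\beta}(-t)$ as a positive mixture of the completely monotone functions $t\mapsto\calE_{\alpha,\alpha}(-tu^{\alpha})$ and so settles all $\beta\ge\alpha$ at once, with no sign analysis of any explicit density. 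On the necessity side, your $\beta<\alpha$ argument is correct as stated: $\calE_{\alpha,\beta}(-t)\sim t^{-1}/\Gamma(\beta-\alpha)$ with $\beta-\alpha\in(-1,0)$ forces eventual negativity. But your $\alpha>1$ argument is looser than you suggest: for $1<\alpha<2$ and $\beta>\alpha$ the exponential contributions are \emph{damped} on the real axis and the algebraic leading term $t^{-1}/\Gamma(\beta-\alpha)$ is positive, so no sign change of the function itself need occur. The robust argument is complex-analytic: a completely monotone function extends to a bounded analytic function on each half-plane $\{\re z\ge\delta\}$, whereas for $\alpha>1$ the growth sector $\abs{\arg w}<\alpha\pi/2$ of $\calE_{\alpha,\beta}(w)$, written with $w=-z$, intersects $\{\re z>0\}$, so $\calE_{\alpha,\beta}(-z)$ is unbounded there; this rules out every $\alpha>1$ uniformly in $\beta$.
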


\begin{prop}[{\cite[Theorem 4.5]{diethelm2010analysis}}]\label{->0}
Let $\alpha>0$ and $\gamma\in\C$. The limit $\lim\limits_{t\to\infty}\calE_{\alpha}(-\gamma t^\alpha)=0$ holds if $\abs{\arg(\gamma)}<\alpha\pi/2$.
\end{prop}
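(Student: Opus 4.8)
The plan is to reduce the claim to the large-argument asymptotics of the Mittag-Leffler function and then to track the argument of $-\gamma t^\alpha$ as $t\to\infty$. Recall the classical two-sector asymptotic expansion of $\calE_\alpha$ \cite{diethelm2010analysis}: for $0<\alpha<2$ and any fixed $\mu$ with $\alpha\pi/2<\mu<\min\{\pi,\alpha\pi\}$, one has, as $\abs{z}\to\infty$,
\[
\calE_\alpha(z)=\frac1\alpha\exp(z^{1/\alpha})-\sum_{k=1}^{N}\frac{z^{-k}}{\Gamma(1-\alpha k)}+O(\abs{z}^{-N-1})\quad(\abs{\arg z}\le\mu),
\]
while
\[
\calE_\alpha(z)=-\sum_{k=1}^{N}\frac{z^{-k}}{\Gamma(1-\alpha k)}+O(\abs{z}^{-N-1})\quad(\mu\le\abs{\arg z}\le\pi).
\]
The dichotomy reflects the fact that the exponentially large term $\frac1\alpha\exp(z^{1/\alpha})$ is present precisely when $\re(z^{1/\alpha})>0$, i.e.\ when $\abs{\arg z}<\alpha\pi/2$; just outside this sector it disappears and only algebraically decaying terms survive.

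First I would set $z=z(t)=-\gamma t^\alpha$. Since $t^\alpha>0$, we have $\arg z=\arg(-\gamma)$, hence $\abs{\arg z}=\pi-\abs{\arg\gamma}$ (independent of $t$), while $\abs{z}=\abs{\gamma}\,t^\alpha\to\infty$. The hypothesis $\abs{\arg\gamma}<\alpha\pi/2$ then yields
\[
\abs{\arg z}=\pi-\abs{\arg\gamma}>\pi-\frac{\alpha\pi}{2}\ge\frac{\alpha\pi}{2},
\]
where the last inequality uses $\alpha\le 1$. Because $\abs{\arg z}$ lies strictly between $\alpha\pi/2$ and $\pi$, I may fix a single $\mu\in(\alpha\pi/2,\min\{\pi,\alpha\pi\})$ with $\mu\le\abs{\arg z}$, so that $z(t)$ belongs to the second sector of the expansion for every $t>0$.

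In that sector the exponential contribution is absent, so with $N=1$,
\[
\calE_\alpha(-\gamma t^\alpha)=-\frac{(-\gamma t^\alpha)^{-1}}{\Gamma(1-\alpha)}+O\!\left((\abs{\gamma}\,t^\alpha)^{-2}\right),
\]
and since $\abs{z(t)}\to\infty$ every term on the right tends to $0$; thus $\lim_{t\to\infty}\calE_\alpha(-\gamma t^\alpha)=0$. (When $\alpha=1$ the factors $1/\Gamma(1-k)$ all vanish and one recovers $\calE_1(z)=e^z\to0$ directly.)

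The genuine analytic content is hidden in the asymptotic expansion itself, whose proof goes through a Hankel-type contour integral for $\calE_\alpha$ and a deformation separating the residue at the pole from an exponentially small remainder; I would invoke it rather than reprove it. The only delicate bookkeeping left is the argument count, and it is here that $\alpha\le 1$ is essential: it is exactly what forces $\abs{\arg(-\gamma)}=\pi-\abs{\arg\gamma}$ above $\alpha\pi/2$ and thereby places $-\gamma t^\alpha$ in the decay sector. For $\alpha>1$ the stated hypothesis no longer guarantees this (e.g.\ $\calE_2(-\gamma t^2)=\cos(\sqrt{\gamma}\,t)$ for $\gamma>0$ does not decay), so the argument, and the conclusion, rely on the range $0<\alpha\le1$ in which the proposition is applied.
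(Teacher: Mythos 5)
You should know that the paper contains no proof of this proposition to compare against: the statement is imported verbatim from Diethelm's book, and the bracketed citation \emph{is} the paper's entire justification. Your argument is therefore best judged as a reconstruction of the cited result, and as such it is correct and essentially the standard proof: setting $z(t)=-\gamma t^{\alpha}$, the quantity $\abs{\arg z}=\pi-\abs{\arg\gamma}$ is independent of $t$ while $\abs{z}\to\infty$; the hypothesis together with $\alpha\le 1$ places $z(t)$ in the sector $\abs{\arg z}>\alpha\pi/2$, where the two-sector expansion has no exponential term, and the surviving algebraic terms give $\calE_\alpha(-\gamma t^{\alpha})=O(t^{-\alpha})\to 0$. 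Invoking the expansion rather than reproving it is at the same level of rigor as the paper's own citation. Your closing caveat is moreover a genuine catch, not pedantry: as literally stated, for all $\alpha>0$, the proposition is false (take $\alpha=2$, $\gamma=1$: then $\abs{\arg\gamma}=0<\alpha\pi/2$ but $\calE_2(-t^2)=\cos t$ does not tend to zero), and your inequality $\pi-\alpha\pi/2\ge\alpha\pi/2$ is precisely where the restriction $\alpha\le 1$ enters; the sharp hypothesis for $0<\alpha<2$ would be $\abs{\arg\gamma}<\pi-\alpha\pi/2$, equivalently $\abs{\arg(-\gamma)}>\alpha\pi/2$, which your hypothesis implies exactly when $\alpha\le 1$. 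Since the paper only ever applies the proposition inside Lemma \ref{limpsiEalpha=Gamma/L}, with $0<\alpha\le 1$ and real $\gamma=L>0$, your restricted version covers every use actually made of it.
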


Several assumptions concerning the weight $\psi$ will be used.
\begin{asu}\label{asu-psi}
The function $\psi:\R_{\geq\ell}\to\R_{\geq 0}$ is 
\begin{enumerate}
\item strictly increasing, of class $\calC^1$ with $\psi'>0$.
\item $\sup\{\psi(t):t\geq\ell\}=\infty$.
\item of class $\calC^2$ with $\psi''\geq 0$.
\end{enumerate}
\end{asu}

\section{Auxiliary results}\label{sec-auxi}
This section contains several technical observations which will be later on referred to.

\subsection{Chain rule}
In this subsection, we show that the fractional-order derivative of a composite function is different from its integer-order derivative.

\begin{prop}\label{lem-C}
Let $\alpha\in (0,1]$ and $\ell\in\R_{\geq 0}$. For $t\geq\ell$, let us define the function $\zeta_t$ by setting
\begin{equation}\label{zetats}
\zeta_t(s)\triangleq g(f(s))-g(f(t))-\inner{\nabla g(f(t))}{f(s)-f(t)}.
\end{equation}
Then the following identity holds for every $t\geq\ell$
\begin{eqnarray*}
&&\Gamma(1-\alpha)\left({}^C\!D_{\ell,\psi}^\alpha g(f(t))-\inner{\nabla g(f(t))}{{}^C\!D_{\ell,\psi}^\alpha f(t)}\right)\\
&&=-\dfrac{\psi'(\ell)\zeta_t(\ell)}{(\psi(t)-\psi(\ell))^\alpha}-\int\limits_\ell^t\zeta_t(s)\left[\dfrac{\psi''(s)}{(\psi(t)-\psi(s))^\alpha}+\dfrac{\alpha\psi'(s)^2}{(\psi(t)-\psi(s))^{\alpha+1}}\right]\,ds.
\end{eqnarray*}
Consequently, if the function $g(\cdot)$ is convex and if the function $\psi(\cdot)$ satisfies Assumptions \ref{asu-psi}(1,3), then we obtain the following chain rule.
$$
{}^C\!D_{\ell,\psi}^\alpha g(f(t))\leq\inner{\nabla g(f(t))}{{}^C\!D_{\ell,\psi}^\alpha f(t)}.
$$
If the function $g(\cdot)$ is concave (i.e. $-g(\cdot)$ is convex) and if the function $\psi(\cdot)$ satisfies Assumptions \ref{asu-psi}(1,3), then
$$
{}^C\!D_{\ell,\psi}^\alpha g(f(t))\geq\inner{\nabla g(f(t))}{{}^C\!D_{\ell,\psi}^\alpha f(t)}.
$$
\end{prop}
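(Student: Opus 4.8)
The plan is to collapse the left-hand side into a single scalar improper integral and then integrate by parts. First I would unwind the two $\psi$-Caputo derivatives. Since $J_\psi=\frac{1}{\psi'}\frac{d}{dt}$ while the outer operator $I_{\ell,\psi}^{1-\alpha}$ reinserts a factor $\psi'$, these two weights cancel, leaving the representations
\[
\Gamma(1-\alpha)\,{}^C\!D_{\ell,\psi}^\alpha g(f(t))=\int_\ell^t(\psi(t)-\psi(s))^{-\alpha}\,\frac{d}{ds}g(f(s))\,ds,
\]
\[
\Gamma(1-\alpha)\inner{\nabla g(f(t))}{{}^C\!D_{\ell,\psi}^\alpha f(t)}=\int_\ell^t(\psi(t)-\psi(s))^{-\alpha}\inner{\nabla g(f(t))}{f'(s)}\,ds,
\]
the second obtained componentwise and then pairing with the constant vector $\nabla g(f(t))$ under the integral sign. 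Subtracting and using the ordinary chain rule $\frac{d}{ds}g(f(s))=\inner{\nabla g(f(s))}{f'(s)}$, the integrand collapses to $(\psi(t)-\psi(s))^{-\alpha}\zeta_t'(s)$, where $\zeta_t'(s)=\inner{\nabla g(f(s))-\nabla g(f(t))}{f'(s)}$ is the genuine $s$-derivative of $\zeta_t$. Under the smoothness implicit in the statement (so that $g\circ f$ is differentiable and $\zeta_t$ is twice differentiable near $t$) this improper integral converges at $s=t$, since $\zeta_t'(s)=O(s-t)$ there while the kernel is only $O((t-s)^{-\alpha})$.

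Next I would integrate by parts, keeping $\zeta_t$ itself undifferentiated and differentiating the kernel. The organizing observation is that the bracket appearing in the statement, $\frac{\psi''(s)}{(\psi(t)-\psi(s))^\alpha}+\frac{\alpha\psi'(s)^2}{(\psi(t)-\psi(s))^{\alpha+1}}$, is exactly the total derivative $\frac{d}{ds}\big[\frac{\psi'(s)}{(\psi(t)-\psi(s))^\alpha}\big]$; recognizing this is what dictates the primitive to use and is what makes the two-term form emerge. Integration by parts then transfers the $s$-derivative off $\zeta_t$, producing a boundary contribution at the two endpoints together with the integral $-\int_\ell^t\zeta_t(s)\frac{d}{ds}\big[\frac{\psi'(s)}{(\psi(t)-\psi(s))^\alpha}\big]\,ds$ of the statement.

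The delicate point, and the main obstacle, is the upper endpoint $s=t$, where the kernel is singular. Here I would use not merely that $\zeta_t(t)=0$ but the sharper estimate $\zeta_t(s)=O((t-s)^2)$ as $s\to t$: a second-order Taylor expansion of $g\circ f$ at $t$ shows that the linear part of $\zeta_t$ cancels by construction, leaving quadratic vanishing. Since $\psi(t)-\psi(s)\sim\psi'(t)(t-s)$, the boundary factor multiplied by $\zeta_t(s)$ is $O((t-s)^{2-\alpha})\to 0$ for $\alpha\le 1$, so the endpoint $s=t$ contributes nothing; only $s=\ell$ survives, yielding the boundary term $-\psi'(\ell)\zeta_t(\ell)(\psi(t)-\psi(\ell))^{-\alpha}$. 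Assembling these gives the claimed identity. (For $\alpha=1$ the prefactor $\Gamma(1-\alpha)$ degenerates and the statement reduces to the classical chain rule, so the substantive range is $\alpha\in(0,1)$.)

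With the identity in hand the two inequalities are sign computations. Convexity of $g$ is precisely the first-order condition $g(f(s))\ge g(f(t))+\inner{\nabla g(f(t))}{f(s)-f(t)}$, that is, $\zeta_t(s)\ge 0$ for every $s$. Under Assumptions \ref{asu-psi}(1,3) one has $\psi'>0$ and $\psi''\ge 0$, so both summands of the bracket and the boundary factor $\psi'(\ell)(\psi(t)-\psi(\ell))^{-\alpha}$ are nonnegative; hence the entire right-hand side is $\le 0$. Dividing by $\Gamma(1-\alpha)>0$ gives ${}^C\!D_{\ell,\psi}^\alpha g(f(t))\le\inner{\nabla g(f(t))}{{}^C\!D_{\ell,\psi}^\alpha f(t)}$, and the concave case follows by applying this to $-g$, which reverses every inequality. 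I would stress that convexity enters only through the sign of $\zeta_t$, while $\psi''\ge 0$ is used exactly to keep the $\psi''$-term of the bracket nonnegative; the monotonicity $\psi'>0$ alone would not control it.
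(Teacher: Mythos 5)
There is a genuine gap: your two main steps are mutually inconsistent, and as written they cannot produce the identity in the Proposition. In your first display you unwind ${}^C\!D_{\ell,\psi}^\alpha=I_{\ell,\psi}^{1-\alpha}\circ J_\psi$ and cancel the weight $\psi'$ against $1/\psi'$, so your integrand after subtraction is $(\psi(t)-\psi(s))^{-\alpha}\zeta_t'(s)$. But integration by parts does not let you pick a primitive at will: the derivative is transferred onto the kernel that actually multiplies $\zeta_t'(s)$, namely $(\psi(t)-\psi(s))^{-\alpha}$, whose $s$-derivative is $\alpha\psi'(s)(\psi(t)-\psi(s))^{-\alpha-1}$. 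Your computation therefore yields
\begin{equation*}
-\frac{\zeta_t(\ell)}{(\psi(t)-\psi(\ell))^{\alpha}}\;-\;\alpha\int_\ell^t\frac{\psi'(s)\,\zeta_t(s)}{(\psi(t)-\psi(s))^{\alpha+1}}\,ds,
\end{equation*}
with no $\psi''$ term and no factor $\psi'(\ell)$ in the boundary term --- not the stated identity. It is true, as you observe, that the bracket in the statement equals $\frac{d}{ds}\bigl[\psi'(s)(\psi(t)-\psi(s))^{-\alpha}\bigr]$, but to obtain it by parts the integrand must be $\psi'(s)(\psi(t)-\psi(s))^{-\alpha}\zeta_t'(s)$, i.e.\ the $\psi'$-weighted kernel. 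That is precisely what the paper's proof uses: it writes $\Gamma(1-\alpha)\bigl({}^C\!D_{\ell,\psi}^\alpha g(f(t))-\inner{\nabla g(f(t))}{{}^C\!D_{\ell,\psi}^\alpha f(t)}\bigr)=\int_\ell^t(\psi(t)-\psi(s))^{-\alpha}\psi'(s)\,\zeta_t'(s)\,ds$, keeping the weight $\psi'(s)$, and only then integrates by parts, which is why the boundary term $\psi'(\ell)\zeta_t(\ell)(\psi(t)-\psi(\ell))^{-\alpha}$ and the $\psi''$ term appear.

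A symptom of the mismatch is your own closing remark: in the identity your steps actually produce, the hypothesis $\psi''\geq 0$ would be superfluous (the only term to control is $\alpha\psi'(s)(\psi(t)-\psi(s))^{-\alpha-1}\zeta_t(s)\geq 0$), whereas the Proposition explicitly invokes Assumption \ref{asu-psi}(3), which matters only because differentiating the \emph{weighted} kernel creates the $\psi''$ term. So to prove the statement as written you must start from the weighted representation, as the paper does; alternatively, you would have to confront the fact that your unweighted representation leads to a different identity altogether, rather than silently switching kernels between your first and second steps. The remaining ingredients of your argument are sound and essentially match the paper's: your Taylor-remainder estimate for the vanishing of the boundary contribution at $s=t$ (where even $\zeta_t(s)=o(t-s)$ suffices) plays the role of the paper's L'Hospital computation, and the sign analysis deriving the convex and concave inequalities from the identity is the same as the paper's.
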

\begin{proof}
Taking into account the definition of ${}^C\!D_{\ell,\psi}^\alpha$, we can write
\begin{eqnarray*}
&&\Gamma(1-\alpha)\left({}^C\!D_{\ell,\psi}^\alpha g(f(t))-\inner{\nabla g(f(t))}{{}^C\!D_{\ell,\psi}^\alpha f(t)}\right)\\
&&=\int\limits_\ell^t(\psi(t)-\psi(s))^{-\alpha}\psi'(s)\inner{\nabla g(f(s))-\nabla g(f(t))}{f'(s)}\,ds\\
&&=\int\limits_\ell^t(\psi(t)-\psi(s))^{-\alpha}\psi'(s)\zeta_t'(s)\,ds\\
&&=\int\limits_\ell^t(\psi(t)-\psi(s))^{-\alpha}\psi'(s)\,d\zeta_t(s),
\end{eqnarray*}
which implies, by integration parts, that
\begin{eqnarray*}
&&\Gamma(1-\alpha)\left({}^C\!D_{\ell,\psi}^\alpha g(f(t))-\inner{\nabla g(f(t))}{{}^C\!D_{\ell,\psi}^\alpha f(t)}\right)\\
&&=\lim\limits_{s\to t}(\psi(t)-\psi(s))^{-\alpha}\psi'(s)\zeta_t(s)-(\psi(t)-\psi(\ell))^{-\alpha}\psi'(\ell)\zeta_t(\ell)\\
&&-\int\limits_\ell^t\zeta_t(s)[(\psi(t)-\psi(s))^{-\alpha}\psi''(s)+\alpha\psi'(s)^2 (\psi(t)-\psi(s))^{-\alpha-1}]\,ds.
\end{eqnarray*}
Thus, we obtain the desired result because
\begin{eqnarray*}
\lim\limits_{s\to t}(\psi(t)-\psi(s))^{-\alpha}\zeta_t(s)
&=&\lim\limits_{s\to t}\dfrac{\zeta_t(s)}{(\psi(t)-\psi(s))^\alpha}\\
&=&\lim\limits_{s\to t}\dfrac{\zeta_t'(s)}{-\alpha\psi'(s)(\psi(t)-\psi(s))^{\alpha-1}}\quad\text{(by L'Hospital's Rule)}\\
&=&0\quad\text{(as $0<\alpha\leq 1$)}.
\end{eqnarray*}
\end{proof}

\begin{rem}
Proposition \ref{lem-C} generalizes existing inequalities in \cite{aguila2014lyapunov, chen2017convex} and contains both existing works as special cases. Specially, if $\psi(t)=t$, Proposition \ref{lem-C} is reduced to the original inequality in \cite[Theorem 1]{chen2017convex}.
\end{rem}

\begin{prop}\label{prop-H}
Let $\alpha\in (0,1]$ and $b\in\R_{\geq 0}$. For $t\geq b$, let $\zeta_t$ be the function given in \eqref{zetats}. Then the following identity holds for every $t\geq b$
\begin{eqnarray*}
&&\Gamma(1-\alpha)\left({}^{RL}\!D_{b,\psi}^\alpha g(f(t))-\inner{\nabla g(f(t))}{{}^{RL}\!D_{b,\psi}^\alpha f(t)}\right)\\
&&=[\psi(t)-\psi(b)]^{-\alpha}[g(f(t))-\inner{\nabla g(f(t))}{f(t)}]\\
&&-\alpha\int\limits_b^t[\psi(t)-\psi(s)]^{-\alpha-1}\psi'(s)\zeta_t(s)\,ds.\\
\end{eqnarray*}
\end{prop}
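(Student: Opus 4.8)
The plan is to mirror the strategy behind Proposition \ref{lem-C}, the only genuinely new difficulty being that for the Riemann--Liouville derivative the operator $J_\psi=\frac{1}{\psi'}\frac{d}{dt}$ sits \emph{outside} the fractional integral, so one must first differentiate a singular integral before any cancellation involving $\zeta_t$ becomes visible.

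First I would unfold the definition ${}^{RL}\!D_{b,\psi}^\alpha=J_\psi\circ I_{b,\psi}^{1-\alpha}$ and write, for a generic $\calC^1$ function $x$,
\[
\Gamma(1-\alpha)\,{}^{RL}\!D_{b,\psi}^\alpha x(t)=\frac{1}{\psi'(t)}\frac{d}{dt}\int\limits_b^t(\psi(t)-\psi(s))^{-\alpha}\psi'(s)x(s)\,ds.
\]
The integrand is singular at $s=t$, so a naive Leibniz rule produces two divergent boundary contributions that must cancel. To make this rigorous I would de-singularize first: using $(\psi(t)-\psi(s))^{-\alpha}\psi'(s)\,ds=-\frac{1}{1-\alpha}\,d_s\big[(\psi(t)-\psi(s))^{1-\alpha}\big]$, one integration by parts turns the integral into $\frac{x(b)}{1-\alpha}(\psi(t)-\psi(b))^{1-\alpha}+\frac{1}{1-\alpha}\int_b^t(\psi(t)-\psi(s))^{1-\alpha}x'(s)\,ds$, whose integrand now vanishes at $s=t$. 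Differentiating this (Leibniz is now legitimate, since $\alpha<1$ keeps $(\psi(t)-\psi(s))^{-\alpha}$ integrable) and dividing by $\psi'(t)$ yields the clean splitting
\[
\Gamma(1-\alpha)\,{}^{RL}\!D_{b,\psi}^\alpha x(t)=x(b)(\psi(t)-\psi(b))^{-\alpha}+\int\limits_b^t(\psi(t)-\psi(s))^{-\alpha}x'(s)\,ds,
\]
which is exactly the Riemann--Liouville--Caputo decomposition in this $\psi$-setting.

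Next I would apply this identity with $x=g\circ f$ and, componentwise, with $x=f$, and form the combination tested against $\nabla g(f(t))$. Since $\frac{d}{ds}g(f(s))=\inner{\nabla g(f(s))}{f'(s)}$, the integrands combine into $\inner{\nabla g(f(s))-\nabla g(f(t))}{f'(s)}=\zeta_t'(s)$, while the boundary terms at $s=b$ combine into $(\psi(t)-\psi(b))^{-\alpha}\big[g(f(b))-\inner{\nabla g(f(t))}{f(b)}\big]$. Using the algebraic identity $g(f(b))-\inner{\nabla g(f(t))}{f(b)}=\zeta_t(b)+\big[g(f(t))-\inner{\nabla g(f(t))}{f(t)}\big]$, this is the point where the target term $[\psi(t)-\psi(b)]^{-\alpha}\big[g(f(t))-\inner{\nabla g(f(t))}{f(t)}\big]$ first appears, accompanied by a stray summand $(\psi(t)-\psi(b))^{-\alpha}\zeta_t(b)$.

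Finally I would integrate $\int_b^t(\psi(t)-\psi(s))^{-\alpha}\zeta_t'(s)\,ds$ by parts once more. The $s=t$ boundary term vanishes by the very L'Hospital computation used in Proposition \ref{lem-C} (here $\zeta_t(t)=0=\zeta_t'(t)$, so $(\psi(t)-\psi(s))^{-\alpha}\zeta_t(s)\to0$ as $s\to t$), the $s=b$ boundary term equals $-(\psi(t)-\psi(b))^{-\alpha}\zeta_t(b)$ and cancels the stray summand from the previous step, and the surviving integral is precisely $-\alpha\int_b^t[\psi(t)-\psi(s)]^{-\alpha-1}\psi'(s)\zeta_t(s)\,ds$. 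Collecting terms gives the asserted identity. The main obstacle is the middle step: justifying differentiation of the singular Riemann--Liouville integral and confirming that the divergent pieces of the naive Leibniz rule genuinely cancel—the preliminary integration by parts that lowers the singularity from $(\psi(t)-\psi(s))^{-\alpha}$ to $(\psi(t)-\psi(s))^{1-\alpha}$ is the cleanest way to sidestep this.
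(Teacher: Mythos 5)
Your proof is correct and follows essentially the same route as the paper's: both rest on the decomposition $\Gamma(1-\alpha)\,{}^{RL}\!D_{b,\psi}^\alpha x(t)=x(b)(\psi(t)-\psi(b))^{-\alpha}+\int_b^t(\psi(t)-\psi(s))^{-\alpha}x'(s)\,ds$, followed by the same integration by parts, the same L'Hospital argument killing the $s\to t$ boundary term, and the same algebraic identity merging $\zeta_t(b)$ into the stated boundary expression. The only difference is that you derive this decomposition from first principles (de-singularizing integration by parts plus Leibniz differentiation), whereas the paper obtains it by applying the composition rule of \cite[Corollary 1]{jarad2019generalized} to the Newton--Leibniz representation $f(t)=f(b)+I_{b,\psi}^1(f'/\psi')(t)$; your version is self-contained, which is a modest plus.
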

\begin{proof}
By the Newton-Leibniz formula, one has
$$
f(t)=f(b)+\int\limits_b^t f'(s)\,ds=f(b)+I_{b,\psi}^1(f'/\psi')(t),
$$
and so by \cite[Corollary 1]{jarad2019generalized}
\begin{eqnarray*}
{}^{RL}\!D_{b,\psi}^\alpha f(t)
&=&{}^{RL}\!D_{b,\psi}^\alpha[f(b)](t)+{}^{RL}\!D_{b,\psi}^\alpha\circ I_{b,\psi}^1(f'/\psi')(t)\\
&=&\dfrac{f(b)}{\Gamma(1-\alpha)}[\psi(t)-\psi(b)]^{-\alpha}+I_{b,\psi}^{1-\alpha}(f'/\psi')(t)\\
&=&\dfrac{1}{\Gamma(1-\alpha)}\left[f(b)[\psi(t)-\psi(b)]^{-\alpha}+\int\limits_b^t[\psi(t)-\psi(s)]^{-\alpha}f'(s)\,ds\right].
\end{eqnarray*}
Hence, we can write
\begin{eqnarray*}
&&\Gamma(1-\alpha)\left({}^{RL}\!D_{\ell,\psi}^\alpha g(f(t))-\inner{\nabla g(f(t))}{{}^{RL}\!D_{\ell,\psi}^\alpha f(t)}\right)\\
&&=[\psi(t)-\psi(b)]^{-\alpha}[g(f(b))-\inner{\nabla g(f(t))}{f(b)}]\\
&&+\int\limits_b^t[\psi(t)-\psi(s)]^{-\alpha}\inner{\nabla g(f(s))-\nabla g(f(t))}{f'(s)}\,ds\\
&&=[\psi(t)-\psi(b)]^{-\alpha}[g(f(b))-\inner{\nabla g(f(t))}{f(b)}]\\
&&+\int\limits_b^t[\psi(t)-\psi(s)]^{-\alpha}\,d\zeta_t(s),
\end{eqnarray*}
which implies, by integration parts, that
\begin{eqnarray*}
&&\Gamma(1-\alpha)\left({}^{RL}\!D_{\ell,\psi}^\alpha g(f(t))-\inner{\nabla g(f(t))}{{}^{RL}\!D_{\ell,\psi}^\alpha f(t)}\right)\\
&&=[\psi(t)-\psi(b)]^{-\alpha}[g(f(b))-\inner{\nabla g(f(t))}{f(b)}]+\lim\limits_{s\to t}[\psi(t)-\psi(s)]^{-\alpha}\zeta_t(s)\\
&&-[\psi(t)-\psi(b)]^{-\alpha}\zeta_t(b)-\alpha\int\limits_b^t[\psi(t)-\psi(s)]^{-\alpha-1}\psi'(s)\zeta_t(s)\,ds\\
&&=[\psi(t)-\psi(b)]^{-\alpha}[g(f(t))-\inner{\nabla g(f(t))}{f(t)}]\\
&&-\alpha\int\limits_b^t[\psi(t)-\psi(s)]^{-\alpha-1}\psi'(s)\zeta_t(s)\,ds.
\end{eqnarray*}
\end{proof}

Based on Proposition \ref{prop-H}, we can directly obtain the following corollary.
\begin{cor}\label{lem-H}
Let $\alpha\in (0,1]$, $b\in\R_{\geq 0}$ and $\psi$ be a function satisfying Assumption \ref{asu-psi}(1). If the function $g$ satisfies Assumption \ref{asu-f}(2), then
$$
{}^{RL}\!D_{b,\psi}^\alpha g(f(t))\leq\inner{\nabla g(f(t))}{{}^{RL}\!D_{b,\psi}^\alpha f(t)}+\dfrac{g(0)}{\Gamma(1-\alpha)}(\psi(t)-\psi(b))^{-\alpha}.
$$
\end{cor}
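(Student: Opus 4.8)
The plan is to take the exact identity furnished by Proposition~\ref{prop-H} as the starting point and to observe that convexity of $g$ simultaneously controls \emph{both} terms on its right-hand side. Since that identity expresses $\Gamma(1-\alpha)\bigl({}^{RL}\!D_{b,\psi}^\alpha g(f(t))-\inner{\nabla g(f(t))}{{}^{RL}\!D_{b,\psi}^\alpha f(t)}\bigr)$ as the sum of a boundary term $[\psi(t)-\psi(b)]^{-\alpha}\bigl(g(f(t))-\inner{\nabla g(f(t))}{f(t)}\bigr)$ and the integral $-\alpha\int_b^t[\psi(t)-\psi(s)]^{-\alpha-1}\psi'(s)\zeta_t(s)\,ds$, it suffices to bound the boundary term from above and to check that the integral term has the correct sign; dividing by $\Gamma(1-\alpha)>0$ then yields the claim.

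First I would bound the boundary term. Applying the convexity inequality of Assumption~\ref{asu-f}(2) with $x=0$ and $y=f(t)$ gives $g(0)\geq g(f(t))+\inner{\nabla g(f(t))}{0-f(t)}$, that is, $g(f(t))-\inner{\nabla g(f(t))}{f(t)}\leq g(0)$. Because $\psi$ is strictly increasing by Assumption~\ref{asu-psi}(1), we have $\psi(t)-\psi(b)>0$ for $t>b$, so the factor $[\psi(t)-\psi(b)]^{-\alpha}$ is strictly positive and the boundary term is therefore at most $[\psi(t)-\psi(b)]^{-\alpha}g(0)$. Next I would handle the integral. Applying the same convexity inequality, this time with $x=f(s)$ and $y=f(t)$, shows that $\zeta_t(s)=g(f(s))-g(f(t))-\inner{\nabla g(f(t))}{f(s)-f(t)}\geq 0$ for every $s\in[b,t]$. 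By Assumption~\ref{asu-psi}(1) the factors $[\psi(t)-\psi(s)]^{-\alpha-1}$ and $\psi'(s)$ are strictly positive on $(b,t)$, so the integrand is non-negative, hence the integral is non-negative, and the term $-\alpha\int_b^t(\cdots)\,ds$ is $\leq 0$.

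Combining these two estimates bounds the right-hand side of the identity from above by $[\psi(t)-\psi(b)]^{-\alpha}g(0)$. Since $0<\alpha<1$ forces $\Gamma(1-\alpha)>0$, dividing through by $\Gamma(1-\alpha)$ and rearranging produces exactly
\[
{}^{RL}\!D_{b,\psi}^\alpha g(f(t))\leq\inner{\nabla g(f(t))}{{}^{RL}\!D_{b,\psi}^\alpha f(t)}+\dfrac{g(0)}{\Gamma(1-\alpha)}(\psi(t)-\psi(b))^{-\alpha}.
\]
The argument is essentially mechanical, and I do not anticipate a genuine obstacle: the only point worth flagging is that convexity does double duty here, controlling the boundary term through the tangent inequality evaluated at the origin and fixing the sign of $\zeta_t$ in the integrand through the tangent inequality evaluated at $f(s)$. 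The sole technical caveat is the positivity of $\Gamma(1-\alpha)$, which is why the statement is naturally read with $\alpha<1$; the endpoint $\alpha=1$, where $\Gamma(1-\alpha)$ degenerates, would have to be treated as a limiting case rather than directly.
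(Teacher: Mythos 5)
Your proof is correct and follows essentially the same route as the paper: invoke the identity of Proposition~\ref{prop-H}, bound the boundary term by $g(0)$ via the convexity inequality at $x=0$, $y=f(t)$, and discard the integral term using $\zeta_t\geq 0$ together with the positivity of $\psi'$ and the kernel. The paper's own proof is terser (it leaves the sign of the integral term and the positivity of $\Gamma(1-\alpha)$ implicit), so your write-up is simply a more explicit version of the same argument.
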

\begin{proof}
It follows from Assumption \ref{asu-f}(2), that
$$
g(f(t))-\inner{\nabla g(f(t))}{f(t)}\leq g(0)-\inner{\nabla g(f(t))}{0}=g(0).
$$
Hence, we can use Proposition \ref{prop-H} to get the desired result.
\end{proof}

\begin{rem}
Proposition \ref{prop-H} generalises existing inequalities in \cite{liu2016asymptotical} and contains both existing works as its special cases. Specifically, if $g(x)=\frac{\norm{x}^2}{2}$ and $\psi(t)=t$, Proposition \ref{prop-H} is reduced to the original inequality in \cite[Theorem 1]{liu2016asymptotical}.
\end{rem}

\subsection{Jensen-type inequality}
Denote $[c,\ell]_d\triangleq [c,\ell]\times\cdots\times [c,\ell]$ ($d$ times). The following result can be viewed as the Jensen-type inequality for fractional integral.
\begin{prop}\label{jense-inq}
Let $\varphi: [c,\ell]_d\to\R$ be a convex function and $h:[a,b]\to [c,\ell]_d$. Then
$$
\varphi\left(\dfrac{\Gamma(\alpha+1)}{(\psi(t)-\psi(a))^\alpha}\cdot (I_{a,\psi}^\alpha h)(t)\right)\leq \dfrac{\Gamma(\alpha+1)}{(\psi(t)-\psi(a))^\alpha}\cdot(I_{a,\psi}^\alpha\varphi\circ h)(t).
$$
\end{prop}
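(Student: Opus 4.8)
The plan is to rewrite the normalized $\psi$-fractional integral as an average against a probability measure and then reduce the claim to the classical multivariate Jensen inequality. First I would absorb the prefactor using $\Gamma(\alpha+1)=\alpha\Gamma(\alpha)$, so that for any integrable $u$ the operator appearing on both sides of the claimed inequality can be written as
$$
\dfrac{\Gamma(\alpha+1)}{(\psi(t)-\psi(a))^\alpha}(I_{a,\psi}^\alpha u)(t)=\int\limits_a^t u(\tau)\,d\mu_t(\tau),\qquad d\mu_t(\tau)\triangleq\dfrac{\alpha(\psi(t)-\psi(\tau))^{\alpha-1}\psi'(\tau)}{(\psi(t)-\psi(a))^\alpha}\,d\tau.
$$

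Then I would check that $\mu_t$ is a probability measure on $[a,t]$: its density is nonnegative because $\psi'>0$ and $\alpha>0$, and the substitution $r=\psi(\tau)$ gives $\int\limits_a^t\alpha(\psi(t)-\psi(\tau))^{\alpha-1}\psi'(\tau)\,d\tau=\alpha\int_{\psi(a)}^{\psi(t)}(\psi(t)-r)^{\alpha-1}\,dr=(\psi(t)-\psi(a))^\alpha$, so the total mass is $1$. With this normalization the left-hand side of the claim is exactly $\varphi(m_t)$, where $m_t\triangleq\int\limits_a^t h(\tau)\,d\mu_t(\tau)$ is the $\mu_t$-barycenter of $h$, and the right-hand side is $\int\limits_a^t(\varphi\circ h)(\tau)\,d\mu_t(\tau)$. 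Since $h$ takes values in the convex box $[c,\ell]_d$ and $\mu_t$ is a probability measure, $m_t$ again lies in $[c,\ell]_d$, so $\varphi(m_t)$ is well defined.

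I would finish with a supporting-hyperplane argument. Choosing a subgradient $g$ of the convex function $\varphi$ at $m_t$ produces the affine minorant $\varphi(x)\geq\varphi(m_t)+\inner{g}{x-m_t}$ valid for all $x\in[c,\ell]_d$; substituting $x=h(\tau)$ and integrating against $\mu_t$, together with the identity $\int\limits_a^t(h(\tau)-m_t)\,d\mu_t(\tau)=0$, gives $\int\limits_a^t(\varphi\circ h)\,d\mu_t\geq\varphi(m_t)$, which is precisely the asserted inequality.

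The main obstacle is guaranteeing the existence of the subgradient $g$ at $m_t$. A finite convex function admits a subgradient at every point of the relative interior of $[c,\ell]_d$, which handles the generic case; when $m_t$ lands on the boundary I would either restrict $\varphi$ to the smallest face of the box containing $m_t$ (on which $m_t$ is interior) or approximate $m_t$ by interior barycenters and pass to the limit using continuity of $\varphi$ on the compact box. Everything else — the constant bookkeeping, positivity of the density, and the change of variables — is routine.
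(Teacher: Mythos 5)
Your proof is correct and follows essentially the same route as the paper's: both establish that the normalized fractional integral (your barycenter $m_t$) lies in the box $[c,\ell]_d$, then apply the supporting-hyperplane inequality of convexity at that point and integrate against the kernel, with the linear term vanishing identically because the kernel integrates to exactly the normalizing constant. The only differences are cosmetic --- you phrase the kernel as a probability measure $\mu_t$, and you are somewhat more careful than the paper, which simply writes $\varphi'$ at the barycenter without addressing differentiability or the existence of subgradients at boundary points.
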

\begin{proof}
Let $h=(h_1,\cdots,h_d)$. By the assumption,
$$
c\leq h_j(s)\leq\ell,\quad\forall s\in [a,b],\forall j\in\{1,\cdots d\}.
$$
Multiplying the above inequality by $(\psi(t)-\psi(s))^{\alpha-1}\psi'(s)/\Gamma(\alpha)$ and integrating the resulting inequality with respect to $s$ over $[a,t]$, we obtain
$$
\dfrac{c}{\Gamma(\alpha)}\int\limits_a^t (\psi(t)-\psi(s))^{\alpha-1}\psi'(s)\,ds\leq (I_{a,\psi}^\alpha h_j)(t)\leq\dfrac{\ell}{\Gamma(\alpha)}\int\limits_a^t (\psi(t)-\psi(s))^{\alpha-1}\psi'(s)\,ds.
$$
Subsequently,
$$
c\leq \dfrac{\Gamma(\alpha+1)}{(\psi(t)-\psi(a))^\alpha}\cdot (I_{a,\psi}^\alpha h_j)(t)\leq\ell,\quad\forall a\leq t\leq b,\forall j\in\{1,\cdots d\}.
$$
From the convexity of $\varphi$, we obtain
\begin{eqnarray*}
\varphi(h(t))
&\geq&\varphi\left(\dfrac{\Gamma(\alpha+1)}{(\psi(\tau)-\psi(a))^\alpha}\cdot (I_{a,\psi}^\alpha h)(\tau)\right)\\
&&+\left(h(t)-\dfrac{\Gamma(\alpha+1)}{(\psi(\tau)-\psi(a))^\alpha}\cdot (I_{a,\psi}^\alpha h)(\tau)\right)\varphi'\left(\dfrac{\Gamma(\alpha+1)}{(\psi(\tau)-\psi(a))^\alpha}\cdot (I_{a,\psi}^\alpha h)(\tau)\right).
\end{eqnarray*}
Multiplying the above inequality by $(\psi(\tau)-\psi(t))^{\alpha-1}\psi'(t)/\Gamma(\alpha)$ and integrating the resulting inequality with respect to $t$ over $[a,\tau]$, we obtain
\begin{eqnarray*}
(I_{a,\psi}^\alpha\varphi\circ h)(\tau)
&\geq&\dfrac{(\psi(\tau)-\psi(a))^\alpha}{\Gamma(\alpha+1)}\cdot \varphi\left(\dfrac{\Gamma(\alpha+1)}{(\psi(\tau)-\psi(a))^\alpha}\cdot (I_{a,\psi}^\alpha h)(\tau)\right)\\
&&+\left((I_{a,\psi}^\alpha h)(\tau)-(I_{a,\psi}^\alpha h)(\tau)\right)\varphi'\left(\dfrac{\Gamma(\alpha+1)}{(\psi(\tau)-\psi(a))^\alpha}\cdot (I_{a,\psi}^\alpha h)(\tau)\right)\\
&=&\dfrac{(\psi(\tau)-\psi(a))^\alpha}{\Gamma(\alpha+1)}\cdot\varphi\left(\dfrac{\Gamma(\alpha+1)}{(\psi(\tau)-\psi(a))^\alpha}\cdot (I_{a,\psi}^\alpha h)(\tau)\right).
\end{eqnarray*}
\end{proof}

\subsection{Mittag-Leffler function}
The following result will be leveraged in the proofs of Theorems \ref{exp-rate-1} and \ref{exp-rate-2}.
\begin{lem}\label{limpsiEalpha=Gamma/L}
Let $\alpha\in (0,1]$ and $\ell\geq 0$. Suppose that the function $\psi:\R_{\geq\ell}\to\R_{\geq 0}$ satisfies Assumptions \ref{asu-psi}(1-2). Then the limit
$$
\lim\limits_{t\to\infty}\int\limits_\ell^t(\psi(t)-\psi(s))^{\alpha-1}\psi'(s)\calE_\alpha(-L(\psi(s)-\psi(\ell))^\alpha)\,ds=\dfrac{\Gamma(\alpha)}{L}
$$
holds.
\end{lem}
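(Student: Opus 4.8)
The plan is to strip away the weight $\psi$ by a change of variables, recognize the resulting expression as a Riemann--Liouville fractional integral of a Mittag-Leffler function, evaluate it in closed form by term-by-term integration, and then pass to the limit using Proposition \ref{->0}.

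First I would perform the substitution $u=\psi(s)$, which is legitimate since Assumption \ref{asu-psi}(1) guarantees that $\psi$ is strictly increasing, of class $\calC^1$, with $\psi'>0$; this absorbs the factor $\psi'(s)\,ds=du$. Writing $r=u-\psi(\ell)$ and setting $T\triangleq\psi(t)-\psi(\ell)$, the integral becomes
$$
\int_0^T (T-r)^{\alpha-1}\calE_\alpha(-Lr^\alpha)\,dr.
$$
By Assumption \ref{asu-psi}(2) one has $\psi(t)\to\infty$, hence $T\to\infty$, as $t\to\infty$, so it suffices to compute the limit of this integral as $T\to\infty$.

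Next I would expand $\calE_\alpha(-Lr^\alpha)=\sum_{j\geq 0}\frac{(-L)^j r^{\alpha j}}{\Gamma(\alpha j+1)}$ and integrate term by term. The interchange of summation and integration is justified because the series converges uniformly on the compact interval $[0,T]$ while $(T-r)^{\alpha-1}$ is integrable there (the singularity at $r=T$ is integrable as $\alpha-1>-1$); concretely, $\int_0^T (T-r)^{\alpha-1}\calE_\alpha(Lr^\alpha)\,dr<\infty$ supplies an absolutely convergent majorant for Tonelli's theorem. Each term is a Beta integral, $\int_0^T (T-r)^{\alpha-1}r^{\alpha j}\,dr=T^{\alpha(j+1)}\frac{\Gamma(\alpha)\Gamma(\alpha j+1)}{\Gamma(\alpha(j+1)+1)}$, so the factors $\Gamma(\alpha j+1)$ cancel and, after reindexing $k=j+1$, the sum reassembles into the Mittag-Leffler series, yielding the clean identity
$$
\int_0^T (T-r)^{\alpha-1}\calE_\alpha(-Lr^\alpha)\,dr=\frac{\Gamma(\alpha)}{L}\bigl[\,1-\calE_\alpha(-LT^\alpha)\,\bigr].
$$

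Finally, I would invoke Proposition \ref{->0} with $\gamma=L$: since $L>0$ gives $\abs{\arg(L)}=0<\alpha\pi/2$, we obtain $\calE_\alpha(-LT^\alpha)\to 0$ as $T\to\infty$, so the limit equals $\Gamma(\alpha)/L$, as claimed. The main obstacle is the rigorous justification of the term-by-term integration together with the Beta-integral bookkeeping that produces the telescoping identity above; once that identity is established, the conclusion is an immediate appeal to the decay of the Mittag-Leffler function already recorded in Proposition \ref{->0}.
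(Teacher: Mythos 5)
Your proposal is correct, but it reaches the paper's central identity by a genuinely different route. Both proofs pivot on the same formula
\begin{equation*}
\int\limits_\ell^t(\psi(t)-\psi(s))^{\alpha-1}\psi'(s)\,\calE_\alpha(-L(\psi(s)-\psi(\ell))^\alpha)\,ds=\frac{\Gamma(\alpha)}{L}\left[1-\calE_\alpha(-L(\psi(t)-\psi(\ell))^\alpha)\right],
\end{equation*}
and both finish with Proposition \ref{->0} applied with $\gamma=L>0$, using Assumption \ref{asu-psi}(2) to send $\psi(t)\to\infty$. The difference is how the identity is obtained. The paper gets it ``for free'' from the existing machinery: by \cite[Lemma 2]{almeida2017caputo} the function $\calE_\alpha(-L(\psi(\cdot)-\psi(\ell))^\alpha)$ solves the linear $\psi$-Caputo initial value problem \eqref{eq:linear-equation}, and Proposition \ref{inte-eq-equiv} converts that IVP into exactly the Volterra integral equation above. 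You instead derive the identity from scratch: the substitution $u=\psi(s)$ reduces everything to the classical case $\psi(t)=t$, and then termwise integration of the Mittag-Leffler series against $(T-r)^{\alpha-1}$ (Beta integrals, with the $\Gamma(\alpha j+1)$ factors cancelling and the sum reindexing back into $\calE_\alpha$) produces the closed form; your justification of the interchange via the majorant $\calE_\alpha(Lr^\alpha)$ is sound, as is the uniform convergence argument. What each approach buys: the paper's proof is shorter and stays entirely inside the $\psi$-fractional framework it has already set up, at the cost of leaning on two external results; yours is self-contained and elementary, needing neither the ODE characterization of the Mittag-Leffler function nor the equivalence of the differential and integral formulations, and it makes visible that the lemma is really the classical ($\psi(t)=t$) statement in disguise via a change of variables. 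Both are complete proofs.
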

\begin{proof}
It was indicated in \cite[Lemma 2]{almeida2017caputo}, that the function $\calE_\alpha(-L(\psi(\cdot)-\psi(\ell))^\alpha)$ is the solution of the initial value problem
\begin{equation}\label{eq:linear-equation}
{}^C\!D_{\ell,\psi}^\alpha u(t)=-Lu(t),\quad t\geq\ell,\quad u(\ell)=1.
\end{equation}
Hence by Proposition \ref{inte-eq-equiv}, we have
$$
\calE_\alpha(-L(\psi(t)-\psi(\ell))^\alpha)=1-\dfrac{L}{\Gamma(\alpha)}\int\limits_\ell^t(\psi(t)-\psi(s))^{\alpha-1}\psi'(s)\calE_\alpha(-L(\psi(s)-\psi(\ell))^\alpha)\,ds,
$$
and hence we make use of Proposition \ref{->0} in order to get the desired result.
\end{proof}

\section{Gradient methods with Riemann-Liouville type fractional derivatives}\label{sec-RL}
In this subsection, we study
\begin{equation}\label{frac-diff-eq-con-RL}
{}^{RL}\!D_{\ell,\psi}^\alpha z(t)=-\beta\nabla f(z(t)),\quad (I_{\ell,\psi}^{1-\alpha}z)(\ell^+)=z_0\in\R^d,\quad \forall t\geq\ell.
\end{equation}

For $y_*\in S(f)$, let us define the function $\varphi:\R_{\geq\ell}\to\R_{\geq 0}$ by setting
\begin{equation}\label{func-varphi}
\varphi(t)\triangleq \dfrac{1}{2}\norm{z(t)-y_*}^2,\quad t\geq\ell.
\end{equation}

\subsection{The case of convex $f(\cdot)$}
This subsection is devoted to investigating \eqref{frac-diff-eq-con-RL} under the assumption on the non-strongly convexity of $f(\cdot)$. 
\begin{thm}
Let $\alpha\in (0,1]$. Suppose that the function $f:\R^d\to\R$ satisfies Assumption \ref{asu-f}(2) and $\psi:\R_{\geq\ell}\to\R_{\geq 0}$ satisfies Assumption \ref{asu-psi}(1). Consider the fractional-order differential equation, \eqref{frac-diff-eq-con-RL}, where the step size $\beta$ is a constant. Then we have the following conclusions.
\begin{enumerate}
\item There exists a constant $C$ such that
\begin{equation}\label{convex-non-strong-res}
\norm{z(t)-y_*}^2\leq C[\psi(t)-\psi(\ell)]^{\alpha-1},\quad\forall t\geq\ell.
\end{equation}
\item If $\alpha=1$, then $z(\cdot)$ is bounded.
\end{enumerate}
\end{thm}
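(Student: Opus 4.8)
The plan is to treat $\varphi(t)=\tfrac12\norm{z(t)-y_*}^2$ from \eqref{func-varphi} as a Lyapunov functional and to run a fractional differential-inequality (comparison) argument, exactly as one does with energy functionals for classical gradient flows, but now using the Riemann--Liouville chain rule of Corollary \ref{lem-H} in place of the ordinary chain rule.

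First I would apply Corollary \ref{lem-H} with the convex function $g(x)=\tfrac12\norm{x-y_*}^2$ (so $\nabla g(x)=x-y_*$ and $g(0)=\tfrac12\norm{y_*}^2$) along the trajectory $z(\cdot)$. Writing $\varphi(t)=g(z(t))$, this yields
$$
{}^{RL}\!D_{\ell,\psi}^\alpha\varphi(t)\le\inner{z(t)-y_*}{{}^{RL}\!D_{\ell,\psi}^\alpha z(t)}+\frac{\norm{y_*}^2}{2\Gamma(1-\alpha)}(\psi(t)-\psi(\ell))^{-\alpha}.
$$
I would then substitute the governing equation \eqref{frac-diff-eq-con-RL}, namely ${}^{RL}\!D_{\ell,\psi}^\alpha z(t)=-\beta\nabla f(z(t))$, and dispose of the cross term by convexity: since $y_*\in S(f)$ we have $\nabla f(y_*)=0$, so Assumption \ref{asu-f}(2) gives $\inner{\nabla f(z(t))}{z(t)-y_*}\ge f(z(t))-f(y_*)\ge 0$; as $\beta>0$, the cross term is $\le 0$. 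This leaves the scalar inequality ${}^{RL}\!D_{\ell,\psi}^\alpha\varphi(t)\le R(t)$ with $R(t)=\tfrac{\norm{y_*}^2}{2\Gamma(1-\alpha)}(\psi(t)-\psi(\ell))^{-\alpha}$.

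Next I would integrate. Applying the positivity-preserving operator $I_{\ell,\psi}^\alpha$ together with the Riemann--Liouville inversion identity used in the proof of Proposition \ref{inte-eq-equiv}, i.e. $I_{\ell,\psi}^\alpha\circ{}^{RL}\!D_{\ell,\psi}^\alpha\varphi(t)=\varphi(t)-\tfrac{1}{\Gamma(\alpha)}(I_{\ell,\psi}^{1-\alpha}\varphi)(\ell^+)(\psi(t)-\psi(\ell))^{\alpha-1}$, and the Beta-integral evaluation $I_{\ell,\psi}^\alpha\big((\psi(\cdot)-\psi(\ell))^{-\alpha}\big)(t)=\Gamma(1-\alpha)$, I obtain
$$
\varphi(t)\le\frac{(I_{\ell,\psi}^{1-\alpha}\varphi)(\ell^+)}{\Gamma(\alpha)}(\psi(t)-\psi(\ell))^{\alpha-1}+\frac{\norm{y_*}^2}{2}.
$$
Doubling and absorbing the additive constant into the leading term delivers \eqref{convex-non-strong-res} with $C$ depending only on $z_0,y_*,\alpha$. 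For part (2), the cleanest route is to specialize $\alpha=1$ directly: then ${}^{RL}\!D_{\ell,\psi}^1=J_\psi$ and the classical chain rule gives $J_\psi\varphi(t)=\inner{z(t)-y_*}{J_\psi z(t)}=-\beta\inner{z(t)-y_*}{\nabla f(z(t))}\le 0$, whence $\varphi'\le 0$ since $\psi'>0$; thus $\varphi$ is nonincreasing and $z(\cdot)$ is bounded. (Consistently, in part (1) the forcing coefficient $1/\Gamma(1-\alpha)$ vanishes at $\alpha=1$ while $(\psi(t)-\psi(\ell))^{\alpha-1}\equiv 1$.)

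The main obstacle is the integration step: one must justify that passing from the pointwise fractional inequality ${}^{RL}\!D_{\ell,\psi}^\alpha\varphi\le R$ to a pointwise bound on $\varphi$ is legitimate, which is a fractional comparison principle resting on the monotonicity of $I_{\ell,\psi}^\alpha$ and on the inversion identity being valid for $\varphi$. A genuine subtlety is the boundary term $(I_{\ell,\psi}^{1-\alpha}\varphi)(\ell^+)$: for $\alpha<1$ the solution $z$ itself behaves like $(\psi(t)-\psi(\ell))^{\alpha-1}$ near $\ell$ (by Proposition \ref{inte-eq-equiv}(2)), so one must verify that this quantity is finite and compatible with the prescribed data $z_0$. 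Establishing the Beta-integral identity and controlling this boundary contribution are the technical crux; by contrast, the convexity argument that kills the cross term is routine.
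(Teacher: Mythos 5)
Your overall route coincides with the paper's: take $\varphi(t)=\tfrac12\norm{z(t)-y_*}^2$ as a Lyapunov function, apply the Riemann--Liouville chain rule along the trajectory, dispose of the cross term by convexity of $f$ (this part of your argument is correct), and then apply the monotone operator $I_{\ell,\psi}^\alpha$ together with the inversion identity
$$
I_{\ell,\psi}^\alpha\circ{}^{RL}\!D_{\ell,\psi}^\alpha\varphi(t)=\varphi(t)-\frac{1}{\Gamma(\alpha)}\bigl(I_{\ell,\psi}^{1-\alpha}\varphi\bigr)(\ell^+)\,[\psi(t)-\psi(\ell)]^{\alpha-1}.
$$
Your Beta-integral evaluation $I_{\ell,\psi}^\alpha\bigl((\psi(\cdot)-\psi(\ell))^{-\alpha}\bigr)(t)=\Gamma(1-\alpha)$ is also correct.

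The genuine gap is your final step. You arrive at
$$
\varphi(t)\leq\frac{\bigl(I_{\ell,\psi}^{1-\alpha}\varphi\bigr)(\ell^+)}{\Gamma(\alpha)}\,[\psi(t)-\psi(\ell)]^{\alpha-1}+\frac{\norm{y_*}^2}{2},
$$
and assert that the additive constant can be ``absorbed'' into the leading term. It cannot, in general: for $\alpha<1$ and unbounded $\psi$ (e.g.\ $\psi(t)=t$, which is exactly the setting of Assumption \ref{asu-psi}(2) under which the paper's convergence statements have content), $[\psi(t)-\psi(\ell)]^{\alpha-1}\to 0$ as $t\to\infty$, so no finite $C$ can satisfy $\norm{y_*}^2/2\leq C[\psi(t)-\psi(\ell)]^{\alpha-1}$ for all $t\geq\ell$. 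Thus your argument only yields $\norm{z(t)-y_*}^2\leq C[\psi(t)-\psi(\ell)]^{\alpha-1}+\norm{y_*}^2$, which is strictly weaker than \eqref{convex-non-strong-res} unless $y_*=0$ or $\psi$ is bounded. It is worth noting how the paper avoids this: its proof applies the chain rule \emph{without} any additive term, i.e.\ it uses ${}^{RL}\!D_{\ell,\psi}^\alpha\varphi(t)\leq\inner{z(t)-y_*}{{}^{RL}\!D_{\ell,\psi}^\alpha z(t)}$ directly, whereas by Proposition \ref{prop-H} that inequality carries the boundary contribution $[\psi(t)-\psi(\ell)]^{-\alpha}\,\tfrac12\bigl(\norm{y_*}^2-\norm{z(t)}^2\bigr)/\Gamma(1-\alpha)$, which is nonpositive only if $\norm{z(t)}\geq\norm{y_*}$. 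So your more careful bookkeeping of the $g(0)$-term has in fact surfaced a difficulty that the paper's own proof glosses over; nevertheless, as written, your absorption claim fails for unbounded $\psi$, and that step is where the proof breaks. (Your separate argument for part (2), specializing to $\alpha=1$ where ${}^{RL}\!D_{\ell,\psi}^1=J_\psi$ and the classical chain rule applies, is correct.)
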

\begin{proof}
By Proposition \ref{lem-H} and Assumption \ref{asu-f}(2), we have
\begin{eqnarray*}
{}^{RL}\!D_{\ell,\psi}^\alpha\varphi(t)
\leq-\beta\inner{z(t)-y_*}{\nabla f(z(t))}\leq\beta [f_*-f(z(t))]\leq 0,\quad\forall t\in\R_{\geq\ell}
\end{eqnarray*}
and so by Assumption \ref{asu-psi}(1),
\begin{eqnarray*}
0\geq\beta I_{\ell,\psi}^\alpha (f_*-f\circ z)(t)\geq I_{\ell,\psi}^\alpha\circ{}^{RL}\!D_{\ell,\psi}^\alpha\varphi(t)=\varphi(t)-\dfrac{1}{\Gamma(\alpha)}I_{\ell,\psi}^{1-\alpha}\varphi(\ell+)[\psi(t)-\psi(\ell)]^{\alpha-1},
\end{eqnarray*}
where in the last equality we use \cite[Theorem 2.6]{jarad2019generalized}.
\end{proof}

\subsection{The case of strongly convex $f(\cdot)$}
It turns out under the assumption of strong convexity, solutions of  \eqref{frac-diff-eq-con-RL} admit Mittag-Leffler convergence, which is a general type of exponential convergence to a stationary point.
\begin{thm}\label{1-}
Let $\alpha\in (0,1]$. Suppose that the function $f:\R^d\to\R$ satisfies Assumption \ref{asu-f}(3) and $\psi:\R_{\geq\ell}\to\R_{\geq 0}$ satisfies Assumption \ref{asu-psi}(1). Consider the fractional-order differential equation, \eqref{frac-diff-eq-con-RL}, where the step size $\beta$ is a constant. Then we have the following conclusions.
\begin{enumerate}
\item The solution $z(\cdot)$ converges to $y_*$ with rate:
\[\norm{z(t)-y_*}^2\leq\varphi_0[\psi(t)-\psi(\ell)]^{\alpha-1}\calE_{\alpha,\alpha}(-\beta m_f(\psi(t)-\psi(\ell))^\alpha),\quad\forall t\geq\ell,\]
where $\varphi_0\triangleq \frac{1}{2}(I_{\ell,\psi}^{1-\alpha}\norm{z-y_*}^2)(\ell^+)$.
\item If we additionally assume that Assumption \ref{asu-f}(1) hold, then
\[
f(z(t))-f(y_*)\leq\dfrac{1}{2}M_f \varphi_0[\psi(t)-\psi(\ell)]^{\alpha-1}\calE_{\alpha,\alpha}(-\beta m_f(\psi(t)-\psi(\ell))^\alpha),\quad\forall t\geq\ell.
\]
\end{enumerate}
\end{thm}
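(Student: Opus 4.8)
The plan is to run a fractional Lyapunov argument on the function $\varphi(t)=\frac12\norm{z(t)-y_*}^2$ from \eqref{func-varphi}: first derive a scalar fractional differential inequality of the form ${}^{RL}\!D_{\ell,\psi}^\alpha\varphi\le-\beta m_f\varphi$, and then convert it into the pointwise Mittag-Leffler estimate of conclusion (1) by comparison with the explicit solution of the corresponding linear equation. Conclusion (2) will then drop out of (1) by smoothness.

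First I would apply the Riemann-Liouville chain rule of Proposition \ref{prop-H} with outer function $g(x)=\frac12\norm{x-y_*}^2$, which is convex, and inner function $z(\cdot)$, so that $\nabla g(z(t))=z(t)-y_*$ and $\zeta_t(s)=\frac12\norm{z(s)-z(t)}^2\ge0$. Since $\psi'>0$ by Assumption \ref{asu-psi}(1), the integral term in Proposition \ref{prop-H} is nonnegative and may be discarded, leaving
\[
{}^{RL}\!D_{\ell,\psi}^\alpha\varphi(t)\le\inner{z(t)-y_*}{{}^{RL}\!D_{\ell,\psi}^\alpha z(t)}+R(t),\qquad R(t)=\tfrac{[\psi(t)-\psi(\ell)]^{-\alpha}}{\Gamma(1-\alpha)}\bigl(g(z(t))-\inner{\nabla g(z(t))}{z(t)}\bigr).
\]
Substituting the equation \eqref{frac-diff-eq-con-RL} and using that Assumption \ref{asu-f}(3) makes $f$ strongly convex, together with $\nabla f(y_*)=0$ (as $y_*\in S(f)$), so that $\inner{z(t)-y_*}{\nabla f(z(t))}\ge m_f\varphi(t)$, yields ${}^{RL}\!D_{\ell,\psi}^\alpha\varphi(t)\le-\beta m_f\varphi(t)+R(t)$.

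The core step is a comparison principle. The function $v(t)=\varphi_0[\psi(t)-\psi(\ell)]^{\alpha-1}\calE_{\alpha,\alpha}(-\beta m_f(\psi(t)-\psi(\ell))^\alpha)$ is exactly the $\psi$-Riemann-Liouville eigenfunction solving ${}^{RL}\!D_{\ell,\psi}^\alpha v=-\beta m_f v$ with $I_{\ell,\psi}^{1-\alpha}v(\ell^+)=\varphi_0$, as one checks termwise from the series for $\calE_{\alpha,\alpha}$ together with Proposition \ref{inte-eq-equiv}(2). Passing the differential inequality to integral form via Proposition \ref{inte-eq-equiv}(2) expresses $\varphi$ as a Mittag-Leffler convolution against a nonnegative kernel, the nonnegativity being guaranteed by the complete monotonicity of $\calE_{\alpha,\alpha}(-\,\cdot\,)$ in Proposition \ref{E-CM} (its second parameter equals $\alpha$); monotonicity of this solution map would then give $\varphi(t)\le v(t)$, which is conclusion (1), provided the forcing term $R$ is controlled.

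The main obstacle is exactly this boundary term $R(t)$, which is genuinely present in the Riemann-Liouville setting, in contrast to the Caputo chain rule of Proposition \ref{lem-C} that annihilates it, and one must verify it does not destroy the decay. Here I would use the exact form $g(z(t))-\inner{\nabla g(z(t))}{z(t)}=\frac12(\norm{y_*}^2-\norm{z(t)}^2)$, which vanishes as $z(t)\to y_*$, and treat $R$ as a forcing term in the integral equation; its fractional integral is controllable in closed form because $I_{\ell,\psi}^\alpha[\psi(\cdot)-\psi(\ell)]^{-\alpha}$ reduces to a Beta integral. Finally, conclusion (2) follows directly from (1): by $M_f$-smoothness (Assumption \ref{asu-f}(1)) and $\nabla f(y_*)=0$ one has the quadratic upper bound $f(z(t))-f(y_*)\le\frac{M_f}{2}\norm{z(t)-y_*}^2$, into which the estimate of part (1) is inserted.
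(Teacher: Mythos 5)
Your overall strategy coincides with the paper's: the Lyapunov function $\varphi(t)=\frac12\norm{z(t)-y_*}^2$, the Riemann--Liouville identity of Proposition \ref{prop-H} with $g(x)=\frac12\norm{x-y_*}^2$ (so $\zeta_t(s)=\frac12\norm{z(s)-z(t)}^2\ge0$ and the integral term may be discarded), strong convexity to obtain a linear fractional differential inequality, and then the forced-linear-equation solution formula with kernel $(\psi(t)-\psi(\tau))^{\alpha-1}\psi'(\tau)\calE_{\alpha,\alpha}(-\beta m_f(\psi(t)-\psi(\tau))^\alpha)$, whose nonnegativity (Proposition \ref{E-CM}) lets one drop the slack; your part (2) is the same quadratic upper bound the paper obtains from \eqref{str-conv-1} and \eqref{Nesterov-ineq-219}. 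Whether one quotes a variation-of-constants theorem, as the paper does, or verifies the eigenfunction property of $v$ and invokes monotonicity of the solution map, as you propose, is immaterial.

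The genuine gap is the boundary term $R(t)$: you correctly flag it as ``the main obstacle'' but never dispose of it, and the two devices you suggest cannot work (for $\alpha\in(0,1)$; at $\alpha=1$ the prefactor $1/\Gamma(1-\alpha)$ annihilates $R$). The observation that $R$ ``vanishes as $z(t)\to y_*$'' is circular --- convergence is the conclusion, not a hypothesis --- and irrelevant anyway, since $R$ enters through a convolution over all of $[\ell,t]$. Moreover, the Beta-type identity you appeal to quantifies the damage rather than removing it: bounding $g(z)-\inner{\nabla g(z)}{z}=\frac12(\norm{y_*}^2-\norm{z}^2)\le\frac12\norm{y_*}^2$ and using
\begin{equation*}
\int_\ell^t(\psi(t)-\psi(s))^{\alpha-1}\psi'(s)\,\calE_{\alpha,\alpha}\bigl(-\lambda(\psi(t)-\psi(s))^\alpha\bigr)(\psi(s)-\psi(\ell))^{-\alpha}\,ds=\Gamma(1-\alpha)\,\calE_\alpha\bigl(-\lambda(\psi(t)-\psi(\ell))^\alpha\bigr),
\end{equation*}
the best your scheme yields is $\varphi(t)\le v(t)+\frac12\norm{y_*}^2\calE_\alpha(-\beta m_f(\psi(t)-\psi(\ell))^\alpha)$. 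This parasitic term decays like $(\psi(t)-\psi(\ell))^{-\alpha}$, strictly slower than the asserted rate $[\psi(t)-\psi(\ell)]^{\alpha-1}\calE_{\alpha,\alpha}(-\beta m_f(\psi(t)-\psi(\ell))^\alpha)\asymp(\psi(t)-\psi(\ell))^{-\alpha-1}$, so it cannot be absorbed: your argument establishes conclusion (1) only when $R\le0$, e.g.\ when $y_*=0$ or $\norm{z(t)}\ge\norm{y_*}$ for all $t$. Be aware that the paper's own proof evades this point by invoking Corollary \ref{lem-H} as if the term $\frac{g(0)}{\Gamma(1-\alpha)}(\psi(t)-\psi(\ell))^{-\alpha}$ were absent, which is legitimate only when $g(0)=\frac12\norm{y_*}^2=0$; so you have located a real soft spot of the Riemann--Liouville case, but identifying the obstacle is not the same as overcoming it, and as written your proposal does not prove the stated bound.
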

\begin{proof}
(1) Note that
\[
f(x)\geq f(y)+\inner{\nabla f(y)}{x-y}+\dfrac{m_f}{2}\norm{x-y}^2,\quad\forall x,y\in\R^d.
\]
In particular with $y=z(t)$ and $x=y_*$, we get
\begin{eqnarray*}
\inner{z(t)-y_*}{\nabla f(z(t))}
&\geq& f(z(t))-f(y_*)+\dfrac{m_f}{2}\norm{z(t)-y_*}^2\\
&\geq& \dfrac{m_f}{2}\norm{z(t)-y_*}^2,\quad\forall t\geq\ell,
\end{eqnarray*}
where the last inequality holds since $y_*\in\underset{x\in\R^d}{\text{arg\,min}}f(x)$. By Proposition \ref{lem-H}, we have
\begin{eqnarray*}
{}^{RL}\!D_{\ell,\psi}^\alpha\varphi(t)
&\leq& \inner{z(t)-y_*}{{}^{RL}\!D_{\ell,\psi}^\alpha z(t)}\\
&=&-\beta\inner{z(t)-y_*}{\nabla f(z(t))}\\
&\leq&-\dfrac{\beta m_f}{2}\norm{z(t)-y_*}^2.
\end{eqnarray*}
For setting
$$
h(t)\triangleq -\dfrac{\beta m_f}{2}\norm{z(t)-y_*}^2-{}^{RL}\!D_{\ell,\psi}^\alpha\varphi(t),
$$
we have $h(t)\geq 0$ for all $t\geq 0$, and moreover
\begin{equation}\label{eq-1}
{}^{RL}\!D_{\ell,\psi}^\alpha\varphi(t)=-\beta m_f\varphi(t)-h(t).
\end{equation}
By \cite[Theorem 5.1]{jarad2019generalized}, we can write
\begin{eqnarray*}
\varphi(t)
&=&\varphi_0[\psi(t)-\psi(\ell)]^{\alpha-1}\calE_{\alpha,\alpha}(-\beta m_f(\psi(t)-\psi(\ell))^\alpha)\\
&&-\int\limits_\ell^t (\psi(t)-\psi(\tau))^{\alpha-1}\psi'(\tau)\calE_{\alpha,\alpha}(-\beta m_f(\psi(t)-\psi(\tau))^\alpha)h(\tau)\,d\tau\\
&\leq&\varphi_0[\psi(t)-\psi(\ell)]^{\alpha-1}\calE_{\alpha,\alpha}(-\beta m_f(\psi(t)-\psi(\ell))^\alpha),
\end{eqnarray*}
where in the last inequality, we use Proposition \ref{E-CM}.

(2) By inequalities \eqref{str-conv-1} and \eqref{Nesterov-ineq-219}, we get
\[
f(z(t))-f(y_*)\leq\dfrac{M_f}{2}\norm{z(t)-y_*}^2\leq\dfrac{1}{2}M_f \varphi_0[\psi(t)-\psi(\ell)]^{\alpha-1}\calE_{\alpha,\alpha}(-\beta m_f(\psi(t)-\psi(\ell))^\alpha).
\]
\end{proof}

\subsection{Convergence at an exponential rate}
Theorem \ref{1-} reveals that the solution of  \eqref{frac-diff-eq-con-RL} can converge to a stationary point at the Mittag-Leffler convergence rate. In particular with $\alpha=1$, we recover the exponential rate $O(e^{-\beta m_f\psi(t)})$ for the continuous gradient method \eqref{GM-ODE}. In this subsection, we study the exponential rate when $\alpha\in (0,1)$.
\begin{thm}\label{exp-rate-1}
Let $\alpha\in (0,1)$. Suppose that the function $f:\R^d\to\R$ satisfies Assumption \ref{asu-f}(1) and $\psi:\R_{\geq\ell}\to\R_{\geq 0}$ satisfies Assumptions \ref{asu-psi}(1-2). Consider the fractional-order differential equation, \eqref{frac-diff-eq-con-RL}, where the step size $\beta$ is constant. If the solution $z(\cdot)$ of  \eqref{frac-diff-eq-con-C} converges to $y_*\in S(f)$ at the exponential rate $O(e^{-\omega\psi(t)})$, then $y_*=0$.
\end{thm}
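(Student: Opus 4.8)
The plan is to pass to the integral form of the solution and show that the gradient contribution washes out in the limit, forcing $y_*=0$. First I would apply Proposition \ref{inte-eq-equiv}(2) with $g(s,z(s))=-\beta\nabla f(z(s))$ to write
\[
z(t)=\dfrac{z_0}{\Gamma(\alpha)}(\psi(t)-\psi(\ell))^{\alpha-1}-\dfrac{\beta}{\Gamma(\alpha)}\int\limits_\ell^t(\psi(t)-\psi(s))^{\alpha-1}\psi'(s)\nabla f(z(s))\,ds.
\]
Since $\alpha\in(0,1)$ forces $\alpha-1<0$ and $\psi(t)\to\infty$ by Assumption \ref{asu-psi}(2), the first term tends to $0$ as $t\to\infty$. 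As $z(t)\to y_*$, letting $t\to\infty$ reduces the claim $y_*=0$ to showing that the integral term vanishes in the limit.

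Next I would estimate the integrand via the exponential hypothesis. Because $y_*\in S(f)$ gives $\nabla f(y_*)=0$, the $L_f$-smoothness of Assumption \ref{asu-f}(1) yields $\norm{\nabla f(z(s))}=\norm{\nabla f(z(s))-\nabla f(y_*)}\leq L_f\norm{z(s)-y_*}\leq L_fCe^{-\omega\psi(s)}$, where $C$ is the constant implicit in the assumed rate $O(e^{-\omega\psi(t)})$. Hence the norm of the integral term is bounded by a constant multiple of $\int_\ell^t(\psi(t)-\psi(s))^{\alpha-1}\psi'(s)e^{-\omega\psi(s)}\,ds$, and the change of variables $u=\psi(s)$ (legitimate by Assumption \ref{asu-psi}(1)) turns this into a constant times $\int_{\psi(\ell)}^{\psi(t)}(\psi(t)-u)^{\alpha-1}e^{-\omega u}\,du$.

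The main work is to show this last integral tends to $0$, and the key device is a split at the midpoint $u=\psi(t)/2$ (permissible once $\psi(t)/2>\psi(\ell)$, hence for $t$ large). On the lower piece $[\psi(\ell),\psi(t)/2]$ I would bound $(\psi(t)-u)^{\alpha-1}\leq(\psi(t)/2)^{\alpha-1}$ (valid since $\alpha-1<0$) and integrate $e^{-\omega u}$ against it, producing a bound of order $(\psi(t)/2)^{\alpha-1}$, which tends to $0$. On the upper piece $[\psi(t)/2,\psi(t)]$ I would instead bound $e^{-\omega u}\leq e^{-\omega\psi(t)/2}$ and integrate the kernel $(\psi(t)-u)^{\alpha-1}$, producing a bound of order $e^{-\omega\psi(t)/2}(\psi(t)/2)^{\alpha}$, which also tends to $0$. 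Since both pieces vanish for every $\omega>0$, the integral term vanishes in the limit, and therefore $y_*=0$.

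The hard part is precisely this midpoint split. The kernel $(\psi(t)-u)^{\alpha-1}$ has an integrable singularity at the upper endpoint $u=\psi(t)$, while the factor $e^{-\omega u}$ decays as one moves away from the lower endpoint; neither a uniform bound on the kernel nor a uniform bound on the exponential alone suffices over the whole interval. Splitting the domain lets each region be dominated by the factor that is small there, and it is worth noting that this argument requires no lower bound on $\omega$, so the conclusion holds for \emph{every} exponential rate.
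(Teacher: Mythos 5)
Your proof is correct, but it follows a genuinely different route from the paper's. The paper argues by contradiction: assuming $y_*\ne 0$, it sets $K=\beta/\norm{y_*}+1$, invokes the algebraic decay of the Mittag-Leffler function (via \cite[Corollary 3.8]{gorenflo2014mittag}) to find $t_2$ with $\calE_\alpha(-L_f(\psi(t)-\psi(\ell))^\alpha)>Ke^{-\omega\psi(t)}$ for $t\geq t_2$, splits the memory integral at $t_2$, and then applies Lemma \ref{limpsiEalpha=Gamma/L} to conclude that $\lim_{t\to\infty}\norm{u(t)}\leq\beta\Gamma(\alpha)/K$, contradicting $\lim_{t\to\infty}\norm{u(t)}=\Gamma(\alpha)\norm{y_*}$ by the choice of $K$. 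You instead argue directly: the kernel term $z_0(\psi(t)-\psi(\ell))^{\alpha-1}/\Gamma(\alpha)$ vanishes because $\alpha<1$, and the memory integral vanishes by your midpoint split of $\int_{\psi(\ell)}^{\psi(t)}(\psi(t)-u)^{\alpha-1}e^{-\omega u}\,du$, an elementary, self-contained estimate that avoids Mittag-Leffler asymptotics, Proposition \ref{->0}, and Lemma \ref{limpsiEalpha=Gamma/L} altogether. What the paper's route buys is reuse of machinery (the same comparison and Lemma \ref{limpsiEalpha=Gamma/L} also drive Theorem \ref{exp-rate-2}) and the underlying intuition that solutions of fractional equations decay at most algebraically; what your route buys is brevity, a direct rather than contradiction-based argument, and transparency about why an exponentially decaying gradient washes out of the memory integral. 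One small point you should make explicit: the hypothesis $\norm{z(t)-y_*}=O(e^{-\omega\psi(t)})$ furnishes the bound only for $t\geq t_1$, so to control $\norm{\nabla f(z(s))}$ on all of $[\ell,t]$ you must enlarge the constant using the boundedness of $\norm{z(s)-y_*}$ on $[\ell,t_1]$ (available since solutions are continuous and $z(t)\to y_*$); the paper handles exactly this issue with its constant $Q$ and the split $\int_\ell^{t_2}+\int_{t_2}^t$.
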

\begin{proof}
Let $z(\cdot)$ be a solution of  \eqref{frac-diff-eq-con-C} converging to a stationary point $y_*$ with the convergence rate $O(e^{-\omega\psi(t)})$. Then there exists $t_1\geq\ell$ such that
$$
\norm{z(t)-y_*}\leq e^{-\omega\psi(t)},\quad\forall t\geq t_1.
$$
Assume by way of contradiction that $y_*\ne 0$ and so we can set
$$
K\triangleq\dfrac{\beta}{\norm{y_*}}+1.
$$
Since $\calE_\alpha(x)=2\calE_{2\alpha}(x^2)-\calE_\alpha(-x)$, by \cite[Corollary 3.8]{gorenflo2014mittag}, we can find $t_2\geq t_1$ with the property that
$$
\calE_\alpha(-L_f(\psi(t)-\psi(\ell))^\alpha)>Ke^{-\omega\psi(t)},\quad\forall t\geq t_2.
$$
Denote
$$
Q\triangleq\sup\{\norm{z(t)-y_*}:t\in [\ell,t_2]\},\quad u(t)\triangleq z_0(\psi(t)-\psi(\ell))^{\alpha-1}-\Gamma(\alpha)y_*.$$
By Proposition \ref{inte-eq-equiv}, $z(\cdot)$ is of the following form
$$
z(t)=\dfrac{z_0}{\Gamma(\alpha)}(\psi(t)-\psi(\ell))^{\alpha-1}-\dfrac{\beta}{\Gamma(\alpha)}\int\limits_\ell^t (\psi(t)-\psi(s))^{\alpha-1}\psi'(s)\nabla f(z(s))\,ds,
$$
and so
\begin{eqnarray*}
\Gamma(\alpha)\norm{u(t)}
&\leq&\Gamma(\alpha)\norm{z(t)-y_*}+\beta\int\limits_\ell^t (\psi(t)-\psi(s))^{\alpha-1}\psi'(s)\norm{\nabla f(z(s))}\,ds\\
&\leq&\Gamma(\alpha)\norm{z(t)-y_*}+\beta L_f\int\limits_\ell^t (\psi(t)-\psi(s))^{\alpha-1}\psi'(s)\norm{z(s)-y_*}\,ds\\
&=&\Gamma(\alpha)\norm{z(t)-y_*}+\beta L_f\left\{\int\limits_\ell^{t_2}+\int\limits_{t_2}^t\right\} (\psi(t)-\psi(s))^{\alpha-1}\psi'(s)\norm{z(s)-y_*}\,ds.
\end{eqnarray*}
We estimate
\begin{eqnarray*}
\Gamma(\alpha)\norm{u(t)}
&\leq&\Gamma(\alpha)\norm{z(t)-y_*}+\beta L_f Q\int\limits_\ell^{t_2}(\psi(t)-\psi(s))^{\alpha-1}\psi'(s)\,ds\\
&&+\dfrac{\beta L_f}{K}\int\limits_{t_2}^t(\psi(t)-\psi(s))^{\alpha-1}\psi'(s)\calE_\alpha(-L_f(\psi(s)-\psi(\ell))^\alpha)\,ds\\
&\leq&\Gamma(\alpha)\norm{z(t)-y_*}+\dfrac{\beta L_f Q}{\alpha}[(\psi(t)-\psi(\ell))^\alpha-(\psi(t)-\psi(t_2))^\alpha]\\
&&+\dfrac{\beta L_f}{K}\int\limits_\ell^t(\psi(t)-\psi(s))^{\alpha-1}\psi'(s)\calE_\alpha(-L_f(\psi(s)-\psi(\ell))^\alpha)\,ds.
\end{eqnarray*}
Note that by the Mean Value theorem (applied for the function $(\psi(t)-\psi(\cdot))^\alpha$), there is $\delta\in (\ell,t_2)$ with
$$
(\psi(t)-\psi(\ell))^\alpha-(\psi(t)-\psi(t_2))^\alpha=-\alpha(\psi(t)-\psi(\delta))^{\alpha-1}\psi'(\delta),
$$
which implies, as $\alpha\in (0,1)$ and $\psi$ satisfies Assumption \ref{asu-psi}(2), that
$$
\lim\limits_{t\to\infty}[(\psi(t)-\psi(\ell))^\alpha-(\psi(t)-\psi(t_2))^\alpha]=0.
$$
This limit and Lemma \ref{limpsiEalpha=Gamma/L} show
\begin{eqnarray*}
\lim\limits_{t\to\infty}\norm{u(t)}
&\leq&\lim\limits_{t\to\infty}\left\{\Gamma(\alpha)\norm{z(t)-y_*}+\dfrac{\beta L_f Q}{\alpha}[(\psi(t)-\psi(\ell))^\alpha-(\psi(t)-\psi(t_2))^\alpha]\right\}\\
&&+\dfrac{\beta L_f}{K}\lim\limits_{t\to\infty}\int\limits_\ell^t(\psi(t)-\psi(s))^{\alpha-1}\psi'(s)\calE_\alpha(-L_f(\psi(s)-\psi(\ell))^\alpha)\,ds\\
&=&\dfrac{\beta\Gamma(\alpha)}{K}.
\end{eqnarray*}
Thus,
$$
\dfrac{\beta\Gamma(\alpha)}{K}\geq\lim\limits_{t\to\infty}\norm{u(t)}=\Gamma(\alpha)\norm{y_*},
$$
but this is a contradiction.
\end{proof}

\subsection{Perturbations}
In this section, we study
\begin{equation}\label{frac-diff-eq-con-RL-per}
{}^{RL}\!D_{\ell,\psi}^\alpha z(t)=-\beta\nabla f(z(t))+g(t),\quad (I_{\ell,\psi}^{1-\alpha}z)(\ell^+)=z_0\in\R^d,\quad \forall t\geq\ell,
\end{equation}
where the function $g(\cdot)$ reflects an external action on the system. Thus, \eqref{frac-diff-eq-con-RL-per} can be viewed as a perturbation of \eqref{frac-diff-eq-con-RL}.

\begin{asu}\label{asu-g}
The function $g:\R_{\geq\ell}\to\R^d$ satisfies
$$
Q\triangleq\sup_{t\geq\ell}\int\limits_\ell^t(\psi(t)-\psi(s))^{\alpha-1}\psi'(s)\norm{g(s)}^2\,dt<\infty.
$$
\end{asu}

\begin{thm}
Let $\alpha\in (0,1]$. Suppose that the function $f:\R^d\to\R$ satisfies Assumption \ref{asu-f}(3) (in this case, $S(f)=\{y_*\}$) and $\psi:\R_{\geq\ell}\to\R_{\geq 0}$ satisfies Assumptions \ref{asu-psi}(1-2). Consider the fractional-order differential equation, \eqref{frac-diff-eq-con-RL-per}, where the step size $\beta$ is a constant. If the step size $\beta$ satisfies
\begin{equation}\label{beta>}
\beta>\dfrac{1}{m_f},
\end{equation}
then $y_*$ is a limit point of $z(\cdot)$, i.e. there exists a sequence $\{s_m\}\in\R_{\geq\ell}$ such that
$$
\lim\limits_{m\to\infty}z(s_m)=y_*.
$$
\end{thm}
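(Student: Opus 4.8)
The plan is to run a Lyapunov-type argument on the quantity $\varphi(t)=\tfrac12\norm{z(t)-y_*}^2$ from \eqref{func-varphi}, the same function used in Theorem \ref{1-}. First I would apply the Riemann–Liouville chain rule of Corollary \ref{lem-H} to the convex quadratic $x\mapsto\tfrac12\norm{x-y_*}^2$ (legitimate since $\psi$ satisfies Assumption \ref{asu-psi}(1)), which gives
\[
{}^{RL}\!D_{\ell,\psi}^\alpha\varphi(t)\leq\inner{z(t)-y_*}{{}^{RL}\!D_{\ell,\psi}^\alpha z(t)}+\dfrac{\norm{y_*}^2}{2\Gamma(1-\alpha)}(\psi(t)-\psi(\ell))^{-\alpha}.
\]
Substituting the perturbed equation \eqref{frac-diff-eq-con-RL-per} splits the inner product into the gradient contribution $-\beta\inner{z(t)-y_*}{\nabla f(z(t))}$ and the forcing contribution $\inner{z(t)-y_*}{g(t)}$.

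Next I would estimate these two pieces. From Assumption \ref{asu-f}(3), strong convexity together with $\nabla f(y_*)=0$ yields $\inner{z(t)-y_*}{\nabla f(z(t))}\geq m_f\varphi(t)$, exactly as in the proof of Theorem \ref{1-}, while Young's inequality gives $\inner{z(t)-y_*}{g(t)}\leq\varphi(t)+\tfrac12\norm{g(t)}^2$. Combining the three estimates produces
\[
{}^{RL}\!D_{\ell,\psi}^\alpha\varphi(t)\leq-(\beta m_f-1)\varphi(t)+\tfrac12\norm{g(t)}^2+\dfrac{\norm{y_*}^2}{2\Gamma(1-\alpha)}(\psi(t)-\psi(\ell))^{-\alpha}.
\]
It is precisely here that the hypothesis \eqref{beta>} enters: $\beta>1/m_f$ makes the damping coefficient $\lambda\triangleq\beta m_f-1$ strictly positive.

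The decisive step is to apply the order-preserving operator $I_{\ell,\psi}^\alpha$ (order-preserving because its kernel $(\psi(t)-\psi(s))^{\alpha-1}\psi'(s)$ is nonnegative) to the last inequality and to use the composition identity $I_{\ell,\psi}^\alpha\circ{}^{RL}\!D_{\ell,\psi}^\alpha\varphi(t)=\varphi(t)-\tfrac{1}{\Gamma(\alpha)}(I_{\ell,\psi}^{1-\alpha}\varphi)(\ell^+)[\psi(t)-\psi(\ell)]^{\alpha-1}$ from \cite[Theorem 2.6]{jarad2019generalized}. Bounding $I_{\ell,\psi}^\alpha(\tfrac12\norm{g}^2)(t)\leq Q/(2\Gamma(\alpha))$ via Assumption \ref{asu-g} and evaluating the power rule $I_{\ell,\psi}^\alpha[(\psi(\cdot)-\psi(\ell))^{-\alpha}](t)=\Gamma(1-\alpha)$, I would reach
\[
\varphi(t)+\lambda\,I_{\ell,\psi}^\alpha\varphi(t)\leq\dfrac{\varphi_0}{\Gamma(\alpha)}[\psi(t)-\psi(\ell)]^{\alpha-1}+\dfrac{Q}{2\Gamma(\alpha)}+\dfrac{\norm{y_*}^2}{2}.
\]
The right-hand side stays bounded as $t\to\infty$, since $[\psi(t)-\psi(\ell)]^{\alpha-1}$ is constant for $\alpha=1$ and decays to $0$ for $\alpha<1$ by Assumption \ref{asu-psi}(2). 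As both terms on the left are nonnegative and $\lambda>0$, the fractional integral $I_{\ell,\psi}^\alpha\varphi(t)$ is therefore bounded uniformly in $t$.

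The conclusion follows by contradiction. If $y_*$ were not a limit point of $z(\cdot)$, then $\liminf_{t\to\infty}\varphi(t)>0$, so $\varphi(t)\geq c>0$ for all $t\geq T$ and some $c$; substituting $u=\psi(s)$ in the kernel integral gives $I_{\ell,\psi}^\alpha\varphi(t)\geq\tfrac{c}{\Gamma(\alpha+1)}(\psi(t)-\psi(T))^\alpha\to\infty$ by Assumption \ref{asu-psi}(2), contradicting the uniform bound just established. Hence $\liminf_{t\to\infty}\norm{z(t)-y_*}=0$, which yields the required sequence $\{s_m\}$. The main obstacle I anticipate is bookkeeping rather than conceptual: one must track the constants $m_f/2$ (strong convexity) and $1/2$ (Young) so that the damping coefficient comes out to be exactly $\beta m_f-1$, and then verify that each error term is annihilated by $I_{\ell,\psi}^\alpha$ — the boundary term because $I_{\ell,\psi}^\alpha[(\psi(\cdot)-\psi(\ell))^{-\alpha}]$ is a constant, and the forcing term because of Assumption \ref{asu-g}.
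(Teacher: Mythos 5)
Your proposal is correct and takes essentially the same route as the paper's own proof: the Lyapunov function $\varphi(t)=\tfrac12\norm{z(t)-y_*}^2$, strong convexity plus Young's inequality to produce the damping coefficient $\beta m_f-1>0$, application of the order-preserving operator $I_{\ell,\psi}^\alpha$ together with the composition identity of \cite{jarad2019generalized}, and Assumptions \ref{asu-g} and \ref{asu-psi}(2) to force $\inf_{s\geq\ell}\norm{z(s)-y_*}=0$ (the paper phrases the integration step via a nonnegative slack function $h(t)$ rather than via monotonicity of $I_{\ell,\psi}^\alpha$, which is equivalent). If anything, your handling of the chain rule is more careful than the paper's: the paper cites Proposition \ref{lem-C} and silently discards the Riemann--Liouville boundary term, whereas you retain the term $\tfrac{\norm{y_*}^2}{2\Gamma(1-\alpha)}(\psi(t)-\psi(\ell))^{-\alpha}$ from Corollary \ref{lem-H} and correctly annihilate it with the power rule $I_{\ell,\psi}^\alpha\bigl[(\psi(\cdot)-\psi(\ell))^{-\alpha}\bigr](t)=\Gamma(1-\alpha)$.
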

\begin{proof}
By Proposition \ref{lem-C}, we have
\begin{eqnarray*}
{}^{RL}\!D_{\ell,\psi}^\alpha\varphi(t)
&\leq& \inner{z(t)-y_*}{{}^{RL}\!D_{\ell,\psi}^\alpha z(t)}=-\beta\inner{z(t)-y_*}{\nabla f(z(t))}+\inner{z(t)-y_*}{g(t)}\\
&\leq&-\dfrac{\beta m_f}{2}\norm{z(t)-y_*}^2+\inner{z(t)-y_*}{g(t)}\\
&\leq&-\dfrac{1}{2}(\beta m_f-1)\norm{z(t)-y_*}^2+\dfrac{1}{2}\norm{g(t)}^2.
\end{eqnarray*}
Set
$$
h(t)\triangleq-\dfrac{1}{2}(\beta m_f-1)\norm{z(t)-y_*}^2+\dfrac{1}{2}\norm{g(t)}^2-{}^{RL}\!D_{\ell,\psi}^\alpha\varphi(t).
$$
Thus, $h(t)\geq 0$ for $t\geq\ell$, and
$$
{}^{RL}\!D_{\ell,\psi}^\alpha\varphi(t)=-(\beta m_f-1)\varphi(t)+\dfrac{1}{2}\norm{g(t)}^2-h(t).
$$
Hence,
\begin{eqnarray*}
\varphi(t)-\dfrac{1}{\Gamma(\alpha)}[\psi(t)-\psi(\ell)]^{\alpha-1}I_{\ell,\psi}^{1-\alpha}\varphi(\ell+)
&=&I_{\ell,\psi}^\alpha\circ{}^{RL}\!D_{\ell,\psi}^\alpha\varphi(t)\\
&=&-(\beta m_f-1)I_{\ell,\psi}^\alpha\varphi(t)+\dfrac{1}{2}I_{\ell,\psi}^\alpha\norm{g}^2(t)-I_{\ell,\psi}^\alpha h(t)\\
&\leq&-(\beta m_f-1)I_{\ell,\psi}^\alpha\varphi(t)+\dfrac{1}{2}I_{\ell,\psi}^\alpha\norm{g}^2(t),
\end{eqnarray*}
which gives
\begin{eqnarray*}
(\beta m_f-1)I_{\ell,\psi}^\alpha\varphi(t)
&\leq&\dfrac{1}{2}I_{\ell,\psi}^\alpha\norm{g}^2(t)+\dfrac{1}{\Gamma(\alpha)}[\psi(t)-\psi(\ell)]^{\alpha-1}I_{\ell,\psi}^{1-\alpha}\varphi(\ell+)-\varphi(t)\\
&\leq&\dfrac{1}{2}I_{\ell,\psi}^\alpha\norm{g}^2(t)+\dfrac{1}{\Gamma(\alpha)}[\psi(t)-\psi(\ell)]^{\alpha-1}I_{\ell,\psi}^{1-\alpha}\varphi(\ell+).
\end{eqnarray*}
By Assumption \ref{asu-g}, we can write
\begin{eqnarray*}
&&\int\limits_\ell^t(\psi(t)-\psi(s))^{\alpha-1}\psi'(s)\norm{z(s)-y_*}^2\,ds\\
&&\leq\dfrac{Q+\dfrac{2}{\Gamma(\alpha)}[\psi(t)-\psi(\ell)]^{\alpha-1}I_{\ell,\psi}^{1-\alpha}\varphi(\ell+)}{\beta m_f-1},
\end{eqnarray*}
which implies, by Assumption \ref{asu-psi}(2), that
$$
\inf\{\norm{z(s)-y_*}:s\geq\ell\}=0.
$$
\end{proof}

\section{Gradient methods with Caputo type}\label{caputo-type-sec}
In this section, we study
\begin{equation}\label{frac-diff-eq-con-C}
{}^C\!D_{\ell,\psi}^\alpha z(t)=-\beta\nabla f(z(t)),\,z(\ell)=z_0\in\R^d,\quad \forall t\geq\ell.
\end{equation}

For $y_*\in S(f)$, let us define the function $\lambda:\R_{\geq\ell}\to\R_{\geq 0}$ by setting
\begin{equation}\label{func-psi}
\lambda(t)\triangleq \dfrac{1}{2}\norm{z(t)-y_*}^2,\quad t\geq\ell.
\end{equation}

\subsection{The case of convex $f(\cdot)$}
This subsection is devoted to studying \eqref{frac-diff-eq-con-C} under the assumption on that $f(\cdot)$ is convex but perhaps not strongly convex. Theorem \ref{fnxm} indicates that there exists a limit point of $z(\cdot)$ which belongs to $S(f)$. 

\begin{thm}\label{fnxm}
Let $\alpha\in (0,1]$. Suppose that the function $f:\R^d\to\R$ satisfies Assumption \ref{asu-f}(2) and $\psi:\R_{\geq\ell}\to\R_{\geq 0}$ satisfies Assumptions \ref{asu-psi}(1-2). Consider the fractional-order differential equation, \eqref{frac-diff-eq-con-RL}, where the step size $\beta$ is a constant. Then $z(\cdot)$ is bounded, and there is a sequence $\{z(s_m)\}$ converging to a point in $S(f)$. Furthermore, $f(\widehat{z}(t))$ converges to $f_*$ as $O(\psi(t)^{-\alpha})$, where
$$
\widehat{z}(t)\triangleq\dfrac{\Gamma(\alpha+1)}{(\psi(t)-\psi(\ell))^\alpha}\cdot (I_{\ell,\psi}^\alpha z)(t).
$$
\end{thm}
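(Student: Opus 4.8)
The plan is to run a fractional Lyapunov argument on $\lambda(t)=\frac{1}{2}\norm{z(t)-y_*}^2$, adapted to the Riemann--Liouville equation \eqref{frac-diff-eq-con-RL} rather than the Caputo one. First I would invoke the Riemann--Liouville chain rule of Corollary \ref{lem-H} (itself a consequence of Proposition \ref{prop-H}) with outer function $g(x)=\frac{1}{2}\norm{x-y_*}^2$, so that $g(0)=\frac{1}{2}\norm{y_*}^2$, and inner function $z(\cdot)$. Feeding in the equation and the convexity estimate $\inner{z(t)-y_*}{\nabla f(z(t))}\geq f(z(t))-f_*\geq 0$ from Assumption \ref{asu-f}(2), where $f_*=\min f$, this gives
\begin{equation*}
{}^{RL}\!D_{\ell,\psi}^\alpha\lambda(t)\leq-\beta\,[f(z(t))-f_*]+\dfrac{\norm{y_*}^2}{2\Gamma(1-\alpha)}(\psi(t)-\psi(\ell))^{-\alpha}.
\end{equation*}
The extra singular summand is the genuinely Riemann--Liouville feature: it encodes the nonvanishing fractional derivative of the constant $y_*$ and has no analogue in the Caputo calculus.

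Next I would integrate by applying the positivity-preserving operator $I_{\ell,\psi}^\alpha$ together with the composition identity $I_{\ell,\psi}^\alpha\circ{}^{RL}\!D_{\ell,\psi}^\alpha\lambda(t)=\lambda(t)-\frac{1}{\Gamma(\alpha)}(I_{\ell,\psi}^{1-\alpha}\lambda)(\ell^+)[\psi(t)-\psi(\ell)]^{\alpha-1}$ of \cite[Theorem 2.6]{jarad2019generalized}. The pivotal computation is that $I_{\ell,\psi}^\alpha$ flattens the singular kernel to a constant, $I_{\ell,\psi}^\alpha[(\psi(\cdot)-\psi(\ell))^{-\alpha}](t)=\Gamma(1-\alpha)$, so for $\alpha\in(0,1)$ the estimate collapses to the master inequality
\begin{equation*}
\lambda(t)+\beta\,(I_{\ell,\psi}^\alpha(f\circ z-f_*))(t)\leq\dfrac{(I_{\ell,\psi}^{1-\alpha}\lambda)(\ell^+)}{\Gamma(\alpha)}(\psi(t)-\psi(\ell))^{\alpha-1}+\dfrac{\norm{y_*}^2}{2},
\end{equation*}
while for $\alpha=1$ the singular term disappears and one is left with $\lambda(t)+\beta(I_{\ell,\psi}^1(f\circ z-f_*))(t)\leq\lambda(\ell)$.

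All three conclusions would then be extracted from this one inequality. Dropping the nonnegative integral term leaves $\lambda(t)\leq\frac{(I_{\ell,\psi}^{1-\alpha}\lambda)(\ell^+)}{\Gamma(\alpha)}(\psi(t)-\psi(\ell))^{\alpha-1}+\frac{1}{2}\norm{y_*}^2$; since $\psi(t)\to\infty$ by Assumption \ref{asu-psi}(2), the first term tends to $0$, giving boundedness of $z$ for large $t$ together with $\limsup_{t\to\infty}\lambda(t)\leq\frac{1}{2}\norm{y_*}^2$. Dropping instead the $\lambda$ term shows $(I_{\ell,\psi}^\alpha(f\circ z-f_*))(t)$ stays bounded while the total weight $\int_\ell^t(\psi(t)-\psi(s))^{\alpha-1}\psi'(s)\,ds=\alpha^{-1}(\psi(t)-\psi(\ell))^\alpha$ diverges; hence $f(z(s))-f_*$ cannot stay above a positive constant, so $\liminf_{s\to\infty}(f(z(s))-f_*)=0$. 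Picking $s_m$ with $f(z(s_m))\to f_*$ and using boundedness to pass to a convergent subsequence $z(s_m)\to z^*$, continuity yields $f(z^*)=f_*$ and hence $z^*\in S(f)$ by convexity. For the rate, the Jensen-type inequality (Proposition \ref{jense-inq}) applied to the convex $f$ and $h=z$ --- equivalently the subgradient inequality for $f$ at $\widehat z(t)$, which needs only convexity on $\R^d$ --- gives $f(\widehat z(t))-f_*\leq\frac{\Gamma(\alpha+1)}{(\psi(t)-\psi(\ell))^\alpha}(I_{\ell,\psi}^\alpha(f\circ z-f_*))(t)$; substituting the master bound produces $f(\widehat z(t))-f_*=O((\psi(t)-\psi(\ell))^{-1})+O((\psi(t)-\psi(\ell))^{-\alpha})=O(\psi(t)^{-\alpha})$, the first term being negligible because $\alpha\leq 1$.

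The main obstacle I anticipate is the singularity forced by the Riemann--Liouville initial condition $(I_{\ell,\psi}^{1-\alpha}z)(\ell^+)=z_0$: Proposition \ref{inte-eq-equiv}(2) gives $z(t)\sim\frac{z_0}{\Gamma(\alpha)}(\psi(t)-\psi(\ell))^{\alpha-1}$, which blows up as $t\to\ell^+$ whenever $\alpha<1$ and $z_0\neq 0$. Consequently ``$z(\cdot)$ is bounded'' can only be read on $[t_0,\infty)$ for $t_0>\ell$, and one must check that the fractional integrals $(I_{\ell,\psi}^\alpha z)(t)$ and $(I_{\ell,\psi}^\alpha(f\circ z))(t)$ defining $\widehat z$ and entering the master inequality actually converge near $\ell$ under the sole convexity hypothesis --- an integrability issue that is automatic in the Caputo formulation, where $z$ is continuous at $\ell$ with $z(\ell)=z_0$. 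Reconciling the stated unqualified claims with this initial-layer singularity, and confirming that the residual constant $\frac{1}{2}\norm{y_*}^2$ (which prevents full convergence $z\to y_*$ but is harmless for conclusions (2) and (3)) is compatible with the asserted limit point and $O(\psi(t)^{-\alpha})$ rate, is where the real care is needed.
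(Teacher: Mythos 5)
You read the theorem's reference to \eqref{frac-diff-eq-con-RL} literally, but that reference is evidently a typo: the theorem sits in the Caputo section, the surrounding text says the subsection studies \eqref{frac-diff-eq-con-C}, and the paper's own proof is entirely Caputo. It applies Proposition \ref{lem-C} to $\lambda(t)=\frac{1}{2}\norm{z(t)-y_*}^2$ to get ${}^C\!D_{\ell,\psi}^\alpha\lambda(t)\leq\beta[f_*-f(z(t))]\leq 0$, then uses $I_{\ell,\psi}^\alpha\circ{}^C\!D_{\ell,\psi}^\alpha\lambda(t)=\lambda(t)-\lambda(\ell)$ to obtain $\lambda(t)\leq\lambda(\ell)$ together with $\sup_t I_{\ell,\psi}^\alpha(f\circ z-f_*)(t)<\infty$, and finishes exactly as you do: the diverging total weight $\alpha^{-1}(\psi(t)-\psi(\ell))^\alpha$ forces $\inf_s f(z(s))=f_*$, Bolzano--Weierstrass plus continuity of $f$ gives the limit point in $S(f)$, and Proposition \ref{jense-inq} gives $0\leq f(\widehat{z}(t))-f_*\leq\Gamma(\alpha+1)\lambda(\ell)/[\beta(\psi(t)-\psi(\ell))^\alpha]$. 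Your three-part extraction (boundedness, limit point, Jensen rate) is precisely the paper's skeleton; the only divergence is your insistence on the Riemann--Liouville calculus, which replaces Proposition \ref{lem-C} by Corollary \ref{lem-H} and introduces the two singular terms you track.

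That RL execution, however, contains a genuine gap which you flagged as ``where the real care is needed'' but did not resolve, and it is not a removable technicality. Your master inequality carries the constant $(I_{\ell,\psi}^{1-\alpha}\lambda)(\ell^+)$, and the composition identity \cite[Theorem 2.6]{jarad2019generalized} you invoke presupposes that this quantity is finite. For any nonzero initial datum it is not: by Proposition \ref{inte-eq-equiv}(2), $z(t)\sim\frac{z_0}{\Gamma(\alpha)}(\psi(t)-\psi(\ell))^{\alpha-1}$ near $\ell$, so $\lambda(t)\sim\frac{\norm{z_0}^2}{2\Gamma(\alpha)^2}(\psi(t)-\psi(\ell))^{2\alpha-2}$ --- squaring doubles the singularity. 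For $\alpha\leq 1/2$ the integral defining $(I_{\ell,\psi}^{1-\alpha}\lambda)(t)$ already diverges, and for $1/2<\alpha<1$ the power rule gives $I_{\ell,\psi}^{1-\alpha}\left[(\psi(\cdot)-\psi(\ell))^{2\alpha-2}\right](t)=\frac{\Gamma(2\alpha-1)}{\Gamma(\alpha)}(\psi(t)-\psi(\ell))^{\alpha-1}\to\infty$ as $t\to\ell^+$. Hence for $z_0\neq 0$ your master inequality has an infinite right-hand side and is vacuous, and with it all three conclusions; the same squaring obstruction threatens the convergence of $I_{\ell,\psi}^\alpha(f\circ z)$ near $\ell$ whenever $f$ grows quadratically. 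Your argument does survive in the special cases $\alpha=1$ or $z_0=0$ (where $z$ is continuous at $\ell$ and the offending constant vanishes), and your computation $I_{\ell,\psi}^\alpha[(\psi(\cdot)-\psi(\ell))^{-\alpha}](t)=\Gamma(1-\alpha)$ is correct as far as it goes. But the theorem as intended concerns \eqref{frac-diff-eq-con-C}, where $z(\ell)=z_0$, $\lambda(\ell)=\frac{1}{2}\norm{z_0-y_*}^2$ is finite, both singular terms and your residual constant $\frac{1}{2}\norm{y_*}^2$ disappear, and your proof collapses to the paper's verbatim.
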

\begin{proof}
By Proposition \ref{lem-C}, we have
\begin{eqnarray*}
{}^C\!D_{\ell,\psi}^\alpha\lambda(t)
\leq-\beta\inner{z(t)-y_*}{\nabla f(z(t))}\leq\beta [f_*-f(z(t))]\leq 0,\quad\forall t\in\R_{\geq\ell}
\end{eqnarray*}
and so
\begin{eqnarray*}
0\geq\beta I_{\ell,\psi}^\alpha (f_*-f\circ z)(t)\geq I_{\ell,\psi}^\alpha\circ{}^C\!D_{\ell,\psi}^\alpha\lambda(t)=\lambda(t)-\lambda(\ell),
\end{eqnarray*}
where in the last equality we use \cite[Theorem 2.6 and equality 16]{jarad2019generalized}. As a result, $z(\cdot)$ is bounded and furthermore
$$
\sup\{I_{\ell,\psi}^\alpha (f\circ z-f_*)(t):t\geq\ell\}<\infty.
$$
Taking into account the form of $I_{\ell,\psi}^\alpha$ in Definition \ref{defn-frac-deri} and Assumption \ref{asu-psi}(2), we must have
$$
f_*=\inf\{f(z(s)):s\geq\ell\},
$$
which implies, by the definition of an infimum, that there exists a sequence $\{s_n\}\subseteq\R_{\geq\ell}$ such that $\lim\limits_{n\to\infty}f(z(s_n))=f_*$. Since $z(\cdot)$ is bounded, we can extract a convergent subsequence $\{z(s_{n_k})\}$ of $\{z(s_n)\}$. Suppose that $\omega\triangleq\lim\limits_{k\to\infty} z(s_{n_k})$ and then
$$
f_*=\lim\limits_{n\to\infty}f(z(s_n))=\lim\limits_{k\to\infty}f(z(s_{n_k}))=f(\omega);
$$
namely, $\omega$ is a stationary point.

Since $f(\cdot)$ is convex, it follows from Proposition \ref{jense-inq} that
$$
\dfrac{\Gamma(\alpha+1)}{(\psi(t)-\psi(\ell))^\alpha}\cdot (I_{\ell,\psi}^\alpha f\circ z)(t)\geq f(\widehat{z}(t)).
$$
Thus,
\begin{eqnarray*}
0\leq f(\widehat{z}(t))-f_*\leq\dfrac{\Gamma(\alpha+1)}{(\psi(t)-\psi(\ell))^\alpha}\cdot I_{\ell,\psi}^\alpha (f\circ z-f_*)(t)\leq\dfrac{\Gamma(\alpha+1)\lambda(\ell)}{\beta(\psi(t)-\psi(\ell))^\alpha},
\end{eqnarray*}
which implies at least an $O(\psi(t)^{-\alpha})$ convergence rate.
\end{proof}

\subsection{The case of strongly convex $f(\cdot)$}
Under the assumption on the strong convexity, the following result establishes Mittag-Leffler convergence to the optimal point, which is a general type of exponential convergence.

\begin{thm}\label{1}
Let $\alpha\in (0,1]$. Suppose that the function $f:\R^d\to\R$ satisfies Assumption \ref{asu-f}(3) and $\psi:\R_{\geq\ell}\to\R_{\geq 0}$ satisfies Assumption \ref{asu-psi}(1). Consider the fractional-order differential equation, \eqref{frac-diff-eq-con-C}, where the step size $\beta$ is a constant. Then we have the following conclusions.
\begin{enumerate}
\item The solution $z(\cdot)$ converges to $y_*$ with rate:
\[\norm{z(t)-y_*}^2\leq\norm{z_0-y_*}^2\calE_\alpha(-\beta m_f(\psi(t)-\psi(\ell))^\alpha),\quad\forall t\geq\ell.\]
\item If we additionally assume that Assumption \ref{asu-f}(1) holds, then
\[
f(z(t))-f(y_*)\leq\dfrac{1}{2}M_f\norm{z_0-y_*}^2\calE_\alpha(-\beta m_f(\psi(t)-\psi(\ell))^\alpha),\quad\forall t\geq\ell.
\]
\end{enumerate}
\end{thm}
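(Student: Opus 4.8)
The plan is to follow the Riemann--Liouville argument of Theorem \ref{1-} almost verbatim, replacing ${}^{RL}\!D_{\ell,\psi}^\alpha$ by ${}^C\!D_{\ell,\psi}^\alpha$ and exploiting the fact that, unlike in the Riemann--Liouville setting, the Caputo derivative annihilates constants; this makes $\lambda(\ell)=\frac12\norm{z(\ell)-y_*}^2=\frac12\norm{z_0-y_*}^2$ enter as a genuine initial datum and explains why the bound features $\calE_\alpha=\calE_{\alpha,1}$ rather than $\calE_{\alpha,\alpha}$. The whole argument reduces to turning a Caputo differential inequality for the Lyapunov candidate $\lambda$ of \eqref{func-psi} into the stated Mittag--Leffler estimate.

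For part (1), I would first record that Assumption \ref{asu-f}(3) yields the strong-convexity inequality $f(x)\geq f(y)+\inner{\nabla f(y)}{x-y}+\frac{m_f}{2}\norm{x-y}^2$ (integrate the Hessian lower bound along the segment joining $y$ to $x$). Taking $y=z(t)$, $x=y_*$ and using $\nabla f(y_*)=0$ together with $f(z(t))\geq f(y_*)$ gives $\inner{z(t)-y_*}{\nabla f(z(t))}\geq m_f\lambda(t)$. Applying the Caputo chain rule of Proposition \ref{lem-C} to the convex quadratic $g(x)=\frac12\norm{x-y_*}^2$, whose gradient at $z(t)$ is $z(t)-y_*$, and then inserting \eqref{frac-diff-eq-con-C}, I obtain
$$
{}^C\!D_{\ell,\psi}^\alpha\lambda(t)\leq\inner{z(t)-y_*}{{}^C\!D_{\ell,\psi}^\alpha z(t)}=-\beta\inner{z(t)-y_*}{\nabla f(z(t))}\leq-\beta m_f\lambda(t).
$$

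Next I would linearise by setting $h(t)\triangleq-\beta m_f\lambda(t)-{}^C\!D_{\ell,\psi}^\alpha\lambda(t)\geq 0$, so that $\lambda$ solves the inhomogeneous linear $\psi$-Caputo equation ${}^C\!D_{\ell,\psi}^\alpha\lambda(t)=-\beta m_f\lambda(t)-h(t)$ with $\lambda(\ell)=\frac12\norm{z_0-y_*}^2$. The variation-of-constants representation for this equation (cf. \cite{almeida2017caputo, jarad2019generalized}, where $\calE_\alpha$ already solves the homogeneous problem \eqref{eq:linear-equation}) reads
$$
\lambda(t)=\lambda(\ell)\calE_\alpha(-\beta m_f(\psi(t)-\psi(\ell))^\alpha)-\int\limits_\ell^t(\psi(t)-\psi(\tau))^{\alpha-1}\psi'(\tau)\calE_{\alpha,\alpha}(-\beta m_f(\psi(t)-\psi(\tau))^\alpha)h(\tau)\,d\tau.
$$
Since $0<\alpha\leq 1$ forces $\calE_{\alpha,\alpha}(-x)\geq 0$ for $x\geq 0$ by the complete monotonicity of Proposition \ref{E-CM} (the parameters satisfy $\beta=\alpha\geq\alpha$), and since $h\geq 0$, $\psi'>0$, the integral is nonnegative and may be dropped, giving $\lambda(t)\leq\lambda(\ell)\calE_\alpha(-\beta m_f(\psi(t)-\psi(\ell))^\alpha)$, which is the claim after multiplying by $2$.

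Part (2) then follows at once: under the additional Assumption \ref{asu-f}(1), the descent estimate $f(z(t))-f(y_*)\leq\frac{M_f}{2}\norm{z(t)-y_*}^2$ (a consequence of smoothness together with $\nabla f(y_*)=0$) combined with part (1) yields the stated bound. I expect the linearisation step to be the crux: one must justify the exact $\psi$-Caputo variation-of-constants formula and, crucially, use the sign of the kernel $\calE_{\alpha,\alpha}(-\cdot)$ to discard the $h$-term, which is precisely where Proposition \ref{E-CM} is indispensable. A secondary point deserving care is the invocation of Proposition \ref{lem-C}: its chain-rule inequality was stated under Assumptions \ref{asu-psi}(1,3), so one should either strengthen the hypothesis on $\psi$ to include $\psi''\geq 0$, or verify directly for the quadratic $g$ that the boundary term $-\psi'(\ell)\zeta_t(\ell)(\psi(t)-\psi(\ell))^{-\alpha}$ and the kernel bracket in the identity of Proposition \ref{lem-C} retain the correct sign under Assumption \ref{asu-psi}(1) alone.
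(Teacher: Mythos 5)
Your proof follows essentially the same route as the paper's: applying the chain rule of Proposition \ref{lem-C} to $\lambda$, using strong convexity to obtain ${}^C\!D_{\ell,\psi}^\alpha\lambda(t)\leq-\beta m_f\lambda(t)$, linearising via a nonnegative slack $h$, invoking the $\psi$-Caputo variation-of-constants formula of \cite{jarad2019generalized}, discarding the integral term by complete monotonicity (Proposition \ref{E-CM}), and deducing part (2) from the descent estimate $f(z(t))-f(y_*)\leq\frac{M_f}{2}\norm{z(t)-y_*}^2$. Your closing caveat is also well taken: the paper's own proof invokes Proposition \ref{lem-C}, whose stated hypotheses include Assumption \ref{asu-psi}(3), even though the theorem assumes only Assumption \ref{asu-psi}(1).
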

\begin{proof}
(1) 
By Proposition \ref{lem-C}, we have
\begin{eqnarray*}
{}^C\!D_{\ell,\psi}^\alpha\lambda(t)
&\leq& \inner{z(t)-y_*}{{}^C\!D_{\ell,\psi}^\alpha z(t)}\\
&=&-\beta\inner{z(t)-y_*}{\nabla f(z(t))}\\
&\leq&-\dfrac{\beta m_f}{2}\norm{z(t)-y_*}^2,\quad\text{(by Assumption \ref{asu-f}(3))}.
\end{eqnarray*}
For setting
$$
h(t)\triangleq -\dfrac{\beta m_f}{2}\norm{z(t)-y_*}^2-{}^C\!D_{\ell,\psi}^\alpha\lambda(t),
$$
we have $h(t)\geq 0$ for all $t\geq\ell$, and moreover
\begin{equation}\label{eq-1-C}
{}^C\!D_{\ell,\psi}^\alpha\lambda(t)=-\beta m_f\lambda(t)-h(t).
\end{equation}
By \cite[Theorem 5.2]{jarad2019generalized}, we can write
\begin{eqnarray*}
\lambda(t)
&=&\lambda(\ell)\calE_{\alpha}(-\beta m_f(\psi(t)-\psi(\ell))^\alpha)\\
&&-\int\limits_\ell^t (\psi(t)-\psi(\tau))^{\alpha-1}\psi'(\tau)\calE_{\alpha,\alpha}(-\beta m_f(\psi(t)-\psi(\tau))^\alpha)h(\tau)\,d\tau\\
&\leq&\lambda(\ell)\calE_{\alpha}(-\beta m_f(\psi(t)-\psi(\ell))^\alpha),
\end{eqnarray*}
where in the last inequality, we use Proposition \ref{E-CM}.

(2) The proof is similar to the second item of Theorem \ref{1}.
\end{proof}

\subsection{Convergence at an exponential rate}
Theorem \ref{1} reveals that the solution of \eqref{frac-diff-eq-con-C} can converge to a stationary point at a Mittag-Leffler convergence rate. In particular with $\alpha=1$, this convergence speed reduces to the exponential rate $O(e^{-\beta m_f\psi(t)})$. The following result indicates that if $\alpha\in (0,1)$, then there is no nontrivial solution of  \eqref{frac-diff-eq-con-C} converging to a stationary point with such exponential rate.

\begin{thm}\label{exp-rate-2}
Let $\alpha\in (0,1)$. Suppose that the function $f:\R^d\to\R$ satisfies Assumption \ref{asu-f}(1) and $\psi:\R_{\geq\ell}\to\R_{\geq 0}$ satisfies Assumptions \ref{asu-psi}(1-2). Consider the fractional-order differential equation, \eqref{frac-diff-eq-con-C}, where the step size $\beta$ is constant. Then every solution of equation \eqref{frac-diff-eq-con-C} does not converge to any stationary point at the exponential rate $O(e^{-\omega\psi(t)})$.
\end{thm}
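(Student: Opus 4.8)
The plan is to argue by contradiction, following the template of Theorem \ref{exp-rate-1} but now exploiting the fact that the Caputo integral representation retains the \emph{constant} initial term $z_0$ rather than a decaying singular term. Suppose some solution $z(\cdot)$ converges to a stationary point $y_*\in S(f)$ at the rate $O(e^{-\omega\psi(t)})$, so that there is $t_1\geq\ell$ with $\norm{z(t)-y_*}\leq e^{-\omega\psi(t)}$ for all $t\geq t_1$. Assuming the solution is nontrivial I may take $z_0\neq y_*$ and set $K\triangleq\frac{\beta}{\norm{z_0-y_*}}+1$. By Proposition \ref{inte-eq-equiv}(1) the solution obeys
\begin{equation*}
z(t)=z_0-\dfrac{\beta}{\Gamma(\alpha)}\int_\ell^t(\psi(t)-\psi(s))^{\alpha-1}\psi'(s)\nabla f(z(s))\,ds,
\end{equation*}
and since $\nabla f(y_*)=0$, Assumption \ref{asu-f}(1) will give $\norm{\nabla f(z(s))}\leq L_f\norm{z(s)-y_*}$.

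Next I would isolate the constant anchor: rearranging the representation yields $\Gamma(\alpha)(z_0-y_*)=\Gamma(\alpha)(z(t)-y_*)+\beta\int_\ell^t(\psi(t)-\psi(s))^{\alpha-1}\psi'(s)\nabla f(z(s))\,ds$, whence
\begin{equation*}
\Gamma(\alpha)\norm{z_0-y_*}\leq\Gamma(\alpha)\norm{z(t)-y_*}+\beta L_f\int_\ell^t(\psi(t)-\psi(s))^{\alpha-1}\psi'(s)\norm{z(s)-y_*}\,ds.
\end{equation*}
The crux is to show the integral on the right has $\limsup$ at most $\Gamma(\alpha)/K$. As in Theorem \ref{exp-rate-1}, I would first invoke the identity $\calE_\alpha(x)=2\calE_{2\alpha}(x^2)-\calE_\alpha(-x)$ together with \cite[Corollary 3.8]{gorenflo2014mittag} to secure a threshold $t_2\geq t_1$ beyond which $\calE_\alpha(-L_f(\psi(s)-\psi(\ell))^\alpha)>Ke^{-\omega\psi(s)}$, reflecting that the Mittag--Leffler function decays only polynomially while $e^{-\omega\psi(s)}$ decays exponentially in $\psi(s)$. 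Then I split $\int_\ell^t=\int_\ell^{t_2}+\int_{t_2}^t$.

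For the head, boundedness of $z$ on $[\ell,t_2]$ (with $Q\triangleq\sup_{s\in[\ell,t_2]}\norm{z(s)-y_*}$) reduces matters to $\frac{1}{\alpha}[(\psi(t)-\psi(\ell))^\alpha-(\psi(t)-\psi(t_2))^\alpha]$, which tends to $0$ by the Mean Value theorem applied to $(\psi(t)-\psi(\cdot))^\alpha$ combined with Assumption \ref{asu-psi}(2), exactly as in the earlier proof. For the tail, the threshold estimate bounds it by $\frac{\beta L_f}{K}\int_\ell^t(\psi(t)-\psi(s))^{\alpha-1}\psi'(s)\calE_\alpha(-L_f(\psi(s)-\psi(\ell))^\alpha)\,ds$, whose limit is $\frac{\beta\Gamma(\alpha)}{K}$ by Lemma \ref{limpsiEalpha=Gamma/L}. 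Passing to the limit $t\to\infty$ (using $\norm{z(t)-y_*}\to0$) then gives $\Gamma(\alpha)\norm{z_0-y_*}\leq\frac{\beta\Gamma(\alpha)}{K}$, i.e. $\norm{z_0-y_*}\leq\beta/K$; but the choice $K=\frac{\beta}{\norm{z_0-y_*}}+1$ forces $\norm{z_0-y_*}<\norm{z_0-y_*}$, the desired contradiction. Hence exponential convergence to a stationary point would force $z_0=y_*$ and, by uniqueness of solutions, the degenerate constant trajectory, so no nontrivial solution converges exponentially.

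I anticipate the main obstacle to be the tail estimate — specifically, justifying the comparison $\calE_\alpha(-L_f(\psi(s)-\psi(\ell))^\alpha)>Ke^{-\omega\psi(s)}$ uniformly for large $s$ and converting the resulting integral into the clean constant $\beta\Gamma(\alpha)/K$ via Lemma \ref{limpsiEalpha=Gamma/L}. The head term, while requiring the Mean Value theorem trick, is routine once Assumption \ref{asu-psi}(2) is in hand, and the algebraic contradiction at the end is immediate from the definition of $K$.
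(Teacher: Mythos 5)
Your proposal is correct and follows essentially the same route as the paper's own proof: the same contradiction setup with $K=\frac{\beta}{\norm{z_0-y_*}}+1$, the same integral representation from Proposition \ref{inte-eq-equiv}, the same Mittag--Leffler threshold via $\calE_\alpha(x)=2\calE_{2\alpha}(x^2)-\calE_\alpha(-x)$ and \cite[Corollary 3.8]{gorenflo2014mittag}, the same head/tail splitting at $t_2$ handled by the Mean Value theorem and Lemma \ref{limpsiEalpha=Gamma/L}, and the same final contradiction. Your explicit treatment of the degenerate case $z_0=y_*$ (which the paper leaves implicit in its definition of $K$) is a small but welcome refinement rather than a different argument.
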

\begin{proof}
Assume by way of contradiction that $z(\cdot)$ is a solution of equation \eqref{frac-diff-eq-con-C} which converges to a stationary point $y_*$ with the convergence rate $O(e^{-\omega\psi(t)})$. Then there exists $t_1\geq\ell$ such that
$$
\norm{z(t)-y_*}\leq e^{-\omega\psi(t)},\quad\forall t\geq t_1.
$$
Denote
$$
K\triangleq\dfrac{\beta}{\norm{z(\ell)-y_*}}+1.
$$
Since $\calE_\alpha(x)=2\calE_{2\alpha}(x^2)-\calE_\alpha(-x)$, by \cite[Corollary 3.8]{gorenflo2014mittag}, we can find $t_2\geq t_1$ with the property that
$$
\calE_\alpha(-L_f(\psi(t)-\psi(\ell))^\alpha)>Ke^{-\omega\psi(t)},\quad\forall t\geq t_2.
$$
Denote
$$Q\triangleq\sup\{\norm{z(t)-y_*}:t\in [\ell,t_2]\}.$$
By Proposition \ref{inte-eq-equiv}, $z(\cdot)$ is of the following form
$$
z(t)=z_0-\dfrac{\beta}{\Gamma(\alpha)}\int\limits_\ell^t (\psi(t)-\psi(s))^{\alpha-1}\psi'(s)\nabla f(z(s))\,ds,
$$
and so
\begin{eqnarray*}
\Gamma(\alpha)\norm{z_0-y_*}
&\leq&\Gamma(\alpha)\norm{z(t)-y_*}+\beta L_f\left\{\int\limits_\ell^{t_2}+\int\limits_{t_2}^t\right\} (\psi(t)-\psi(s))^{\alpha-1}\psi'(s)\norm{z(s)-y_*}\,ds.
\end{eqnarray*}
We estimate
\begin{eqnarray*}
\Gamma(\alpha)\norm{z_0-y_*}
&\leq&\Gamma(\alpha)\norm{z(t)-y_*}+\beta L_f Q\int\limits_\ell^{t_2}(\psi(t)-\psi(s))^{\alpha-1}\psi'(s)\,ds\\
&&+\dfrac{\beta L_f}{K}\int\limits_{t_2}^t(\psi(t)-\psi(s))^{\alpha-1}\psi'(s)\calE_\alpha(-L_f(\psi(s)-\psi(\ell))^\alpha)\,ds\\
&\leq&\Gamma(\alpha)\norm{z(t)-y_*}+\dfrac{\beta L_f Q}{\alpha}[(\psi(t)-\psi(\ell))^\alpha-(\psi(t)-\psi(t_2))^\alpha]\\
&&+\dfrac{\beta L_f}{K}\int\limits_\ell^t(\psi(t)-\psi(s))^{\alpha-1}\psi'(s)\calE_\alpha(-L_f(\psi(s)-\psi(\ell))^\alpha)\,ds.
\end{eqnarray*}
Since
$$
\lim\limits_{t\to\infty}[(\psi(t)-\psi(\ell))^\alpha-(\psi(t)-\psi(t_2))^\alpha]=0,
$$
by Lemma \ref{limpsiEalpha=Gamma/L} we have
\begin{eqnarray*}
\Gamma(\alpha)\norm{z_0-y_*}
&\leq&\lim\limits_{t\to\infty}\left\{\Gamma(\alpha)\norm{z(t)-y_*}+\dfrac{\beta L_f Q}{\alpha}[(\psi(t)-\psi(\ell))^\alpha-(\psi(t)-\psi(t_2))^\alpha]\right\}\\
&&+\dfrac{\beta L_f}{K}\lim\limits_{t\to\infty}\int\limits_\ell^t(\psi(t)-\psi(s))^{\alpha-1}\psi'(s)\calE_\alpha(-L_f(\psi(s)-\psi(\ell))^\alpha)\,ds\\
&=&\dfrac{\beta\Gamma(\alpha)}{K}.
\end{eqnarray*}
This contradicts the assumption.
\end{proof}

\subsection{Perturbations}
In this section, we study
\begin{equation}\label{frac-diff-eq-con-C-pe}
{}^C\!D_{\ell,\psi}^\alpha z(t)=-\beta\nabla f(z(t))+g(t),\,z(\ell)=z_0\in\R^d,\quad \forall t\geq\ell,
\end{equation}
where the function $g(\cdot)$ reflects an external action on the system. Equation \eqref{frac-diff-eq-con-RL-per} can be viewed as a perturbation of \eqref{frac-diff-eq-con-RL}.
\begin{thm}
Let $\alpha\in (0,1]$. Suppose that the function $f:\R^d\to\R$ satisfies Assumption \ref{asu-f}(3) (in this case, $S(f)=\{y_*\}$) and $\psi:\R_{\geq\ell}\to\R_{\geq 0}$ satisfies Assumption \ref{asu-psi}(1-2). Consider the fractional-order differential equation \eqref{frac-diff-eq-con-C-pe}, where the step size $\beta$ is a constant. If the step size $\beta$ satisfies \eqref{beta>}, then $y_*$ is a limit point of $z(\cdot)$, i.e. there exists a sequence $\{s_m\}\in\R_{\geq\ell}$ such that
$$
\lim\limits_{m\to\infty}z(s_m)=y_*.
$$
\end{thm}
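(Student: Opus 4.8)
The plan is to follow the same Lyapunov-type strategy used in the Riemann--Liouville perturbation theorem earlier in this section, replacing the Riemann--Liouville chain rule and inversion formula by their Caputo counterparts. I would work with the function $\lambda(t)=\frac{1}{2}\norm{z(t)-y_*}^2$ introduced in \eqref{func-psi}, which plays the role of a Lyapunov functional.

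First I would apply the Caputo chain rule Proposition \ref{lem-C} to the convex map $x\mapsto\frac{1}{2}\norm{x-y_*}^2$ and substitute the governing equation \eqref{frac-diff-eq-con-C-pe} to obtain
$$
{}^C\!D_{\ell,\psi}^\alpha\lambda(t)\leq\inner{z(t)-y_*}{{}^C\!D_{\ell,\psi}^\alpha z(t)}=-\beta\inner{z(t)-y_*}{\nabla f(z(t))}+\inner{z(t)-y_*}{g(t)}.
$$
I would then invoke strong convexity (Assumption \ref{asu-f}(3)), which, since $y_*$ is the unique minimizer, gives $\inner{z(t)-y_*}{\nabla f(z(t))}\geq\frac{m_f}{2}\norm{z(t)-y_*}^2$ exactly as in the proof of Theorem \ref{1-}, together with the elementary estimate $\inner{z(t)-y_*}{g(t)}\leq\frac{1}{2}\norm{z(t)-y_*}^2+\frac{1}{2}\norm{g(t)}^2$. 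Combining these yields the differential inequality
$$
{}^C\!D_{\ell,\psi}^\alpha\lambda(t)\leq-\frac{1}{2}(\beta m_f-1)\norm{z(t)-y_*}^2+\frac{1}{2}\norm{g(t)}^2.
$$

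Next I would introduce the nonnegative slack $h(t)$ equal to the difference of the two sides, so that ${}^C\!D_{\ell,\psi}^\alpha\lambda(t)=-(\beta m_f-1)\lambda(t)+\frac{1}{2}\norm{g(t)}^2-h(t)$, and apply the fractional integral $I_{\ell,\psi}^\alpha$ to both sides. The crucial Caputo-specific step is the inversion identity $I_{\ell,\psi}^\alpha\circ{}^C\!D_{\ell,\psi}^\alpha\lambda(t)=\lambda(t)-\lambda(\ell)$ already used in the proof of Theorem \ref{fnxm}; this is cleaner than the Riemann--Liouville version, which carries an extra $[\psi(t)-\psi(\ell)]^{\alpha-1}$ boundary term. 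Discarding the nonnegative contribution $I_{\ell,\psi}^\alpha h(t)$ and using $\lambda(t)\geq 0$ would give
$$
(\beta m_f-1)I_{\ell,\psi}^\alpha\lambda(t)\leq\lambda(\ell)+\frac{1}{2}I_{\ell,\psi}^\alpha\norm{g}^2(t).
$$

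Finally, the hypothesis \eqref{beta>}, i.e. $\beta>1/m_f$, makes the coefficient $\beta m_f-1$ strictly positive, while Assumption \ref{asu-g} bounds $I_{\ell,\psi}^\alpha\norm{g}^2(t)$ by $Q/\Gamma(\alpha)$ uniformly in $t$. Hence the integral $\int_\ell^t(\psi(t)-\psi(s))^{\alpha-1}\psi'(s)\norm{z(s)-y_*}^2\,ds$ remains bounded as $t\to\infty$. I expect the only real (though mild) obstacle to be the concluding contradiction argument: were $\inf_{s\geq\ell}\norm{z(s)-y_*}^2$ a positive constant $c$, then this integral would be at least $c\,(\psi(t)-\psi(\ell))^\alpha/\alpha$, which diverges by Assumption \ref{asu-psi}(2), contradicting the uniform bound. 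Therefore $\inf\{\norm{z(s)-y_*}:s\geq\ell\}=0$, so one can extract a sequence $\{s_m\}\subseteq\R_{\geq\ell}$ with $z(s_m)\to y_*$, establishing $y_*$ as a limit point of $z(\cdot)$.
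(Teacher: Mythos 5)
Your proposal follows the paper's own proof essentially step for step: the same application of Proposition \ref{lem-C} to $\lambda(t)=\frac{1}{2}\norm{z(t)-y_*}^2$, the same strong-convexity and Young-type estimates yielding the coefficient $-(\beta m_f-1)/2$, the same nonnegative slack $h(t)$ and Caputo inversion identity $I_{\ell,\psi}^\alpha\circ{}^C\!D_{\ell,\psi}^\alpha\lambda(t)=\lambda(t)-\lambda(\ell)$, and the same uniform bound on $I_{\ell,\psi}^\alpha\lambda(t)$ combined with Assumption \ref{asu-psi}(2) to force $\inf\{\norm{z(s)-y_*}:s\geq\ell\}=0$. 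Your explicit contradiction argument at the end (lower-bounding the integral by $c(\psi(t)-\psi(\ell))^\alpha/\alpha$) merely spells out a step the paper leaves implicit, so the two proofs coincide in substance.
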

\begin{proof}
By Proposition \ref{lem-C}, we have
\begin{eqnarray*}
{}^C\!D_{\ell,\psi}^\alpha\lambda(t)
&\leq& \inner{z(t)-y_*}{{}^C\!D_{\ell,\psi}^\alpha z(t)}=-\beta\inner{z(t)-y_*}{\nabla f(z(t))}+\inner{z(t)-y_*}{g(t)}\\
&\leq&-\dfrac{\beta m_f}{2}\norm{z(t)-y_*}^2+\inner{z(t)-y_*}{g(t)}\\
&\leq&-\dfrac{1}{2}(\beta m_f-1)\norm{z(t)-y_*}^2+\dfrac{1}{2}\norm{g(t)}^2.
\end{eqnarray*}
For setting
$$
h(t)\triangleq-\dfrac{1}{2}(\beta m_f-1)\norm{z(t)-y_*}^2+\dfrac{1}{2}\norm{g(t)}^2-{}^C\!D_{\ell,\psi}^\alpha\lambda(t),
$$
we have $h(t)\geq 0$ for $t\geq\ell$ and
$$
{}^C\!D_{\ell,\psi}^\alpha\lambda(t)=-(\beta m_f-1)\lambda(t)+\dfrac{1}{2}\norm{g(t)}^2-h(t).
$$
Hence,
\begin{eqnarray*}
\lambda(t)-\lambda(\ell)
&=&I_{\ell,\psi}^\alpha\circ{}^C\!D_{\ell,\psi}^\alpha\lambda(t)\\
&=&-(\beta m_f-1)I_{\ell,\psi}^\alpha\lambda(t)+\dfrac{1}{2}I_{\ell,\psi}^\alpha\norm{g}^2(t)-I_{\ell,\psi}^\alpha h(t)\\
&\leq&-(\beta m_f-1)I_{\ell,\psi}^\alpha\lambda(t)+\dfrac{1}{2}I_{\ell,\psi}^\alpha\norm{g}^2(t),
\end{eqnarray*}
which gives
\begin{eqnarray*}
(\beta m_f-1)I_{\ell,\psi}^\alpha\lambda(t)
&\leq&\dfrac{1}{2}I_{\ell,\psi}^\alpha\norm{g}^2(t)+\lambda(\ell)-\lambda(t)\\
&\leq&\dfrac{1}{2}I_{\ell,\psi}^\alpha\norm{g}^2(t)+\lambda(\ell).
\end{eqnarray*}
By Assumption \ref{asu-g}, we can write
$$
\int\limits_\ell^t(\psi(t)-\psi(s))^{\alpha-1}\psi'(s)\norm{z(s)-y_*}^2\,ds\leq\dfrac{Q+2\lambda(\ell)}{\beta m_f-1},\quad\forall t\geq\ell,
$$
which implies, by Assumption \ref{asu-psi}, that
$$
\inf\{\norm{z(s)-y_*}:s\geq\ell\}=0.
$$
\end{proof}

\section{The Design of the Numerical Methods}\label{de-num-me}
In the previous sections, we showed $\psi$-fractional derivative based methods are also a tool solving Problem \eqref{op-pro}. A problem arising is how to simulate these methods in practice. This section studies that problem in the more general context of fractional differential equations by extending the Adams-Bashforth-Moulton (ABM) method to $\psi$-fractional derivatives. After proving the convergence of the ABM method, we offer numerical examples and indicate that \emph{the fractional order $\alpha$ and weight $\psi$ can be adjusted in order to improve the performance}.

ABM methods are a standard approach to numerical methods for resolving IVPs in fractional order systems, and these methods have been employed successfully in works such as \cite{diethelm2010analysis}. To further validate the use of the ABM method, this section also considers a Picard method that is suitable for application to a subset of the problems considered in this manuscript. In particular, note that for $\psi(t) = t^k$ and $r > 0$, $I_{0,\psi}^\alpha t^r = \frac{\Gamma\left(\frac{r}{k}+1\right)}{\Gamma\left(\frac{r}{k} + \alpha + 1\right)} t^{r+\alpha k}$. Hence, if $\alpha k \in \mathbb{Z}_{\geq 0}$, $^CD_{0,\psi}^\alpha$ maps polynomial to polynomials by linearity. When $g$ is a polynomial, this fact is conducive to iteratively applying Picard's method as $\varphi_{m+1}(t) = x(0) + \frac{1}{\Gamma(\alpha)}\int_0^t(\psi(t)-\psi(s))^{\alpha - 1} \psi'(s)g(s,\varphi_m(s)) ds,$ with $\phi_0(t) = x_0$. The Picard iteration is guaranteed to converge in a neighborhood of the origin by way of the proof of the existence and uniqueness theorem.

\subsection{ABM method for $\psi$-fractional derivatives}
Given a function $g:[0,T] \times \mathbb{R}^d \to \mathbb{R}^d$ that is Lipschitz continuous with Lipschitz constant $L$, a solution of equation \eqref{FO-DE} is given as
$$
x(t) = x(a) + \frac{1}{\Gamma(\alpha)} \int_{a}^t (\psi(t) - \psi(s))^{\alpha - 1} \psi'(s) g(s,x(s)) ds.
$$

The Adams-Bashforth-Moulton method for this initial value problem is a predictor corrector method motivated by the work of Diethelm in \cite{diethelm2010analysis}. Just as in \cite{diethelm2010analysis}, the numerical method uses a piecewise constant predictor and a piecewise linear corrector. The advantage of utilizing this approach to the design of a numerical method for the fractional differential equations at hand is that the only substantial adjustments to the ABM method of \cite{diethelm2010analysis} is an alteration of the coefficients employed in the numerical method. Moreover, in contrast to work such as \cite{rosenfeld2017approximating} is that the evaluation of the ABM method at each time-step is $O(k)$ in computation time, where $k$ is the $k$-th step. Whereas, in \cite{rosenfeld2017approximating}, each timestep requires matrix inversion for interpolation, which typically requires $O(k^3)$ computation time. While the ABM method has slower convergence, the advantage gained through computation time makes it more practical for problems with longer time horizons.

Let $0 = t_0 < t_1 < \ldots < t_{k+1}$. As in \cite{diethelm2010analysis}, let 
\begin{equation}\label{eq:linear-basis}\phi_{j,k+1}(z) \triangleq
\left\{\begin{array}{ll}
\frac{z-t_{j-1}}{t_{j} - t_{j-1}} & \text{ if } t_{j-1} < z \le t_j\\
\frac{t_{j+1}-z}{t_{j+1} - t_{j}} & \text{ if } t_{j} < z \le t_{j+1}\\
0 & \text{ otherwise. }\\
\end{array}\right.
\end{equation}
Given a twice continuously differentiable function $f:[0,t_{k+1}] \to \mathbb{R}$, a piecewise linear expression of $f$ is given as $\tilde f \triangleq \sum_{i=0}^{k+1} f(t_i)\phi_{i,k+1}$, and \[\sup_{t} | f(t) - \tilde f(t) | \le  \max_{i=1,\ldots,k+1} |t_{i} - t_{i-1}|^2 \cdot \sup_{t} |f''(t)|.\] For regularly spaced $t_i$, with step size $h > 0$, it follows that 
\begin{equation}\label{O(h2)}
\sup_{t} | f(t) - \tilde f(t) | \le  h^2 \sup_{t} |f''(t)|.
\end{equation}
If a piecewise constant approximation of $f$ is given as $\hat f \triangleq \sum_{i=0}^{k} f(t_i) \chi_{[t_i,t_{i+1}]}$, where $\chi_{A}$ is the indicator function for the set $A$, then it can be also be shown that
\begin{equation}\label{O(h)}
    \sup_{t} | f(t) - \hat f(t) | \le h \sup_{t} |f'(t)|.
\end{equation}
Given a collection of points $\{ (t_i,x_i) \}_{i=0}^k$, the predictor for $x_{k+1}$ is then given as \begin{equation}\label{eq:predictor} x^P_{k+1} \triangleq x_0 + \frac{1}{\Gamma(\alpha)} \sum_{i=0}^k b_{i,k+1} g(t_i,x_i),\end{equation} where
\begin{align*}b_{i,k+1} \triangleq \int_{t_i}^{t_{i+1}} (\psi(t_{k+1})-\psi(\tau))^{\alpha - 1} \psi'(\tau) d\tau.
\end{align*}
In this setting $t \mapsto g(t,x(t))$ is being approximated by $\hat g = \sum_{i=0}^{k} g(t_i,x_i) \chi_{[t_i,t_{i+1}]}$ which utilizes the approximated points $x_i \approx x(t_i)$. Consequently,
\begin{align*}\| x(t_{k+1}) - x^P_{k+1} \| &= \frac{1}{\Gamma(\alpha)}\bigg\| \int_0^{t_{k+1}} (\psi(t_{k+1})-\psi(\tau))^{\alpha - 1} \psi'(\tau)[g(\tau, x(\tau))\\
&\hspace{0.75in}- \sum_{i=0}^{k} g(t_i,x_i) \chi_{[t_i,t_{i+1}]}(\tau)]d\tau \bigg\|\\
&= \frac{1}{\Gamma(\alpha)} \sum_{i=0}^k \bigg\| \int_{t_{i}}^{t_{i+1}} (\psi(t_{k+1})-\psi(\tau))^{\alpha - 1} \psi'(\tau) [g(\tau, x(\tau))\\
&\hspace{0.75in}- g(t_i,x_i) \chi_{[t_i,t_{i+1}]}(\tau)] d\tau \bigg\|.
\end{align*}
Examining each summand and noting that
$$g(\tau, x(\tau))-g(t_i,x_i) \chi_{[t_i,t_{i+1}]}(\tau)=g(\tau, x(\tau)) - g(t_i,x(t_i)) + g(t_i,x(t_i)) - g(t_i,x_i) \chi_{[t_i,t_{i+1}]}(\tau),$$
we have
\begin{gather*}\left\| \int_{t_{i}}^{t_{i+1}} (\psi(t_{k+1})-\psi(\tau))^{\alpha - 1} \psi'(\tau) \left(g(\tau, x(\tau)) - g(t_i,x_i) \chi_{[t_i,t_{i+1}]}(\tau) \right) d\tau \right\|\\
\le \left\|\int_{t_{i}}^{t_{i+1}} (\psi(t_{k+1})-\psi(\tau))^{\alpha - 1} \psi'(\tau)( g(\tau, x(\tau)) - g(t_i,x(t_i)) ) d\tau\right\| + b_{i,k+1}L\|x(t_i) - x_i\|.
\end{gather*}
Assuming $t \mapsto g(t,x(t))$ is continuously differentiable and employing the mean value theorem, the following is obtained:
\begin{gather*}
\left\| \sum_{i=0}^k \int_{t_{i}}^{t_{i+1}} (\psi(t_{k+1})-\psi(\tau))^{\alpha - 1} \psi'(\tau)( g(\tau, x(\tau)) - g(t_i,x(t_i)) ) d\tau \right\|\\
\le \sup_{t \in [t_0,t_{k+1}]} \left\| \frac{d}{dt} g(t,x(t)) \right\|\sum_{i=0}^k 
\left( \int_{t_{i}}^{t_{i+1}} (\psi(t_{k+1})-\psi(\tau))^{\alpha - 1} \psi'(\tau) (\tau - ih) d\tau\right)\\
\le \sup_{t \in [t_0,t_{k+1}]} \left\| \frac{d}{dt} g(t,x(t)) \right\|
h  \frac{(\psi(t_{k+1}) - \psi(t_0))^\alpha}{\alpha}\\
\end{gather*}
Combining the above results yields
\begin{align*}\| x(t_{k+1}) - x^P_{k+1} \| 
&\le \frac{1}{\Gamma(\alpha)}\left( \sup_{t \in [t_0,t_{k+1}]} \left\| \frac{d}{dt} g(t,x(t)) \right\|
h  \frac{(\psi(t_{k+1}) - \psi(t_0))^\alpha}{\alpha} \right.\\
&\hspace{0.75in}+ \left. L \sup_{i=1,\ldots,k}\|x(t_i) - x_i\| \sum_{i=0}^k b_{i,k+1}\right)\\
&= \frac{1}{\Gamma(\alpha)} \left(\sup_{t \in [t_0,t_{k+1}]} \left\| \frac{d}{dt} g(t,x(t)) \right\|
h  \frac{(\psi(t_{k+1}) - \psi(t_0))^\alpha}{\alpha}\right.\\
&\hspace{0.75in}\left.+ L \sup_{i=1,\ldots,k}\|x(t_i) - x_i\|  \frac{(\psi(t_{k+1}) - \psi(t_0))^\alpha}{\alpha} \right).
\end{align*}
This inequality completes the analysis for the predictor step. The corrector is given as
\begin{equation}\label{eq:corrector}x_{k+1} \triangleq x_0 + \frac{1}{\Gamma(\alpha)} \left( \sum_{i=0}^k a_{i,k+1} g(t_i,x_i) + a_{k+1,k+1} g(t_{k+1},x_{k+1}^P)\right),\end{equation} where \[a_{i,k+1} \triangleq \int_{0}^{t_{k+1}} (\psi(t_{k+1}) - \psi(\tau))^{\alpha-1}\psi'(\alpha)\phi_{i,k+1}(\tau) d\tau.\]
Thus,
\[ a_{i,k+1} =
\left\{
\begin{array}{ll}
\frac{(\psi(t_{k+1}) - \psi(t_0))^{\alpha}}{\alpha} - \frac{1}{t_1 - t_0}\int_{t_0}^{t_1} \frac{(\psi(t_{k+1}) -  \psi(\tau))^{\alpha}}{\alpha} d\tau; & i = 0\\
\frac{(\psi(t_{k+1}) - \psi(t_i))^{\alpha}}{\alpha}  - \frac{(\psi(t_{k+1}) - \psi(t_{i+1}))^\alpha}{\alpha} &\\
\hspace{0.25in}+\frac{1}{t_{i}-t_{i-1}} \int_{t_{i-1}}^{t_i} \frac{(\psi(t_{k+1}) - \psi(\tau))^\alpha}{\alpha} d\tau -  \frac{1}{t_{i+1}-t_{i}} \int_{t_{i}}^{t_{i+1}} \frac{(\psi(t_{k+1}) - \psi(\tau))^\alpha}{\alpha} d\tau& 1 \le i \le k\\
\frac{1}{t_{k+1}-t_k} \int_{t_k}^{t_{k+1}} \frac{(\psi(t_{k+1}) - \psi(\tau))^\alpha}{\alpha} d\tau; & i = k+1.
\end{array}
\right.
\]
\begin{rem}
For practical implementation a simple left or right hand quadrature rule may be employed to give approximate values for $a_{i,k+1}$:
\[ \tilde a_{i,k+1} \triangleq
\left\{
\begin{array}{ll}
\displaystyle\frac{(\psi(t_{k+1}) - \psi(t_0))^{\alpha}}{\alpha} - \frac{(\psi(t_{k+1}) -  \psi(t_0))^{\alpha}}{\alpha}; & i = 0\\
\displaystyle\frac{(\psi(t_{k+1}) - \psi(t_i))^{\alpha}}{\alpha}  - \frac{(\psi(t_{k+1}) - \psi(t_{i+1}))^\alpha}{\alpha} & 1 \le i \le k\\
\displaystyle \frac{(\psi(t_{k+1}) - \psi(t_k))^\alpha}{\alpha}; & i = k+1.
\end{array}
\right.,
\]
\end{rem}
To establish the relevant inequality for the corrector step consider,
\begin{align*}
    \| x(t_{k+1}) - x_{k+1} \| & = \frac{1}{\Gamma(\alpha)} \left\|  \int_{t_0}^{t_{k+1}} (\psi(t_{k+1}) - \psi(\tau))^{\alpha-1} \psi'(\tau) g(\tau,x(\tau)) d\tau\right.\\
    &\hspace{0.5in}\left.- \sum_{i=0}^{k} a_{i,k+1} g(t_i,x_i) - a_{k+1,k+1} g(t_{k+1},x_{k+1}^P) \right\|\\
    &= \frac{1}{\Gamma(\alpha)} \left\|  \int_{t_0}^{t_{k+1}} (\psi(t_{k+1}) - \psi(\tau))^{\alpha-1} \psi'(\tau) g(\tau,x(\tau)) d\tau - \sum_{i=0}^{k+1} a_{i,k+1} g(t_i,x(t_i))\right.\\
    &\hspace{0.5in}\left.+ \sum_{i=0}^{k+1} a_{i,k+1} g(t_i,x(t_i))  - \sum_{i=0}^{k} a_{i,k+1} g(t_i,x_i) - a_{k+1,k+1} g(t_{k+1},x_{k+1}^P) \right\|.
\end{align*}
Leveraging the Lipschitz property of $g$, it follows that 
\begin{gather*}
    \left\|\sum_{i=0}^{k+1} a_{i,k+1} g(t_i,x(t_i))  - \sum_{i=0}^{k} a_{i,k+1} g(t_i,x_i) - a_{k+1,k+1} g(t_{k+1},x_{k+1}^P)\right\|\\
    \le L  \sup_{i=1,\ldots,k} \|x(t_i) - x_i\| \sum_{i=0}^{k} a_{i,k+1} + La_{k+1,k+1}\|x(t_{k+1}) - x_{k+1}^P\|\\
    = L \sup_{i=1,\ldots,k} \|x(t_i) - x_i\| \left(\frac{(\psi(t_{k+1}) - \psi(t_0))^{\alpha}}{\alpha}-\frac{(\psi(t_{k+1}) - \psi(t_k))^\alpha}{\alpha}\right)\\
    + L \left(\frac{(\psi(t_{k+1}) - \psi(t_k))^\alpha}{\alpha}\right) \|x(t_{k+1}) - x_{k+1}^P\|
\end{gather*}
Now consider,
\begin{gather*}
     \left\|\int_{t_0}^{t_{k+1}} (\psi(t_{k+1}) - \psi(\tau))^{\alpha-1} \psi'(\tau) g(\tau,x(\tau)) d\tau - \sum_{i=0}^{k+1} a_{i,k+1} g(t_i,x(t_i))\right\|\\
     = \left\|\int_{t_0}^{t_{k+1}} (\psi(t_{k+1}) - \psi(\tau))^{\alpha-1} \psi'(\tau) \left( g(\tau,x(\tau)) - \sum_{i=0}^{k+1} g(t_i,x(t_i))\phi_{i,k+1}(\tau)\right) d\tau\right\|\\
     \le h^2 \sup_{t} \left\| \frac{d^2}{dt^2} g(t,x(t)) \right\| \int_{t_0}^{t_{k+1}} (\psi(t_{k+1}) - \psi(\tau))^{\alpha-1} \psi'(\tau) d\tau\quad\text{(use \eqref{O(h2)})}\\
     = h^2 \sup_{t} \left\| \frac{d^2}{dt^2} g(t,x(t)) \right\|  \frac{(\psi(t_{k+1}) - \psi(t_0))^{\alpha}}{\alpha}.
\end{gather*}
The following inequalities have been established:
\begin{gather}
    \|x(t_{k+1}) - x_{k+1}\| \le h^2 \sup_{t} \left\| \frac{d^2}{dt^2} g(t,x(t)) \right\|  \frac{(\psi(t_{k+1}) - \psi(t_0))^{\alpha}}{\alpha} + \nonumber\\ \label{eqcorrector-inequalityyyyyy}
    L \sup_{i=1,\ldots,k} \|x(t_i) - x_i\| \left(\frac{(\psi(t_{k+1}) - \psi(t_0))^{\alpha}}{\alpha}-\frac{(\psi(t_{k+1}) - \psi(t_k))^\alpha}{\alpha}\right)\\
    + L \left(\frac{(\psi(t_{k+1}) - \psi(t_k))^\alpha}{\alpha}\right) \|x(t_{k+1}) - x_{k+1}^P\|\nonumber
\end{gather}
and
\begin{gather}\label{eq:predictor-inequality}
    \|x(t_{k+1}) - x_{k+1}^P\| \le 
 \frac{1}{\Gamma(\alpha)}\left(\sup_{t \in [t_0,t_{k+1}]} \left\| \frac{d}{dt} g(t,x(t)) \right\|
h  \frac{(\psi(t_{k+1}) - \psi(t_0))^\alpha}{\alpha}\right.\\
\left. + L \sup_{i=1,\ldots,k}\|x(t_i) - x_i\|  \frac{(\psi(t_{k+1}) - \psi(t_0))^\alpha}{\alpha} \right).\nonumber
\end{gather}

\begin{thm}\label{thm:convergence}
Let $\alpha\in (0,1]$. Suppose that the function $\psi:\R_{\geq a}\to\R_{\geq 0}$ satisfies Assumption \ref{asu-psi}(1). Let $x(t)$ be a solution to \eqref{FO-DE}, and suppose that $t \mapsto g(t,x(t))$ is twice continuously differentiable over $[t_0,T]$, and that $g$ is Lipschitz continuous. Then predictor-corrector scheme above yields the convergence rate \[ \|x(t_k) - x_{k}\| = O(h^{1+\alpha}) \] for a suitably chosen $T$.
\end{thm}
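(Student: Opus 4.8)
The plan is to turn the two local estimates \eqref{eqcorrector-inequalityyyyyy} and \eqref{eq:predictor-inequality} into one recursion for the grid errors $e_k\triangleq\|x(t_k)-x_k\|$ (with $e_0=0$), and then to close it with a discrete fractional Gronwall inequality. First I would fix the uniform grid $t_i=t_0+ih$, $Nh=T-t_0$, and record that every relevant quantity stays bounded because the whole analysis lives on the compact interval $[t_0,T]$: set $M_1\triangleq\sup_t\|\frac{d}{dt}g(t,x(t))\|$, $M_2\triangleq\sup_t\|\frac{d^2}{dt^2}g(t,x(t))\|$, and $A_0\triangleq(\psi(T)-\psi(t_0))^\alpha/\alpha$, so that the total weight obeys $A\le A_0$; and, using Assumption \ref{asu-psi}(1) with the mean value theorem ($\psi(t_{k+1})-\psi(t_k)=\psi'(\xi)h$, with $\psi'$ continuous hence bounded on $[t_0,T]$), record the smallness $a_{k+1,k+1}=O(h^\alpha)$.

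Retracing the corrector estimate but retaining the genuine weighted sum $L\sum_{i=1}^k a_{i,k+1}e_i$ in place of the supremum bound, and substituting \eqref{eq:predictor-inequality} for the predictor term, produces a recursion of the schematic form
\[
e_{k+1}\le C_1 h^{1+\alpha}+L\sum_{i=1}^{k}a_{i,k+1}e_i+O(h^\alpha)\max_{1\le i\le k}e_i .
\]
Here the inhomogeneous term is $O(h^{1+\alpha})$: it gathers the $O(h^2)$ corrector defect with the product $a_{k+1,k+1}$ (which is $O(h^\alpha)$) times the pure $O(h)$ part of the predictor error, and since $\alpha\le1$ the exponent $1+\alpha$ is the dominant (slowest-decaying) one. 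It is essential \emph{not} to replace $\sum_i a_{i,k+1}e_i$ by $(\sum_i a_{i,k+1})\max_i e_i$ as the displayed inequality \eqref{eqcorrector-inequalityyyyyy} does, since $\sum_{i=1}^k a_{i,k+1}=A-a_{k+1,k+1}$ stays of order one rather than small; that coarser bound would make a naive induction grow like $(1+LA_0)^N$ and destroy the rate.

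The crux is then a weight estimate. Using $0\le\phi_{i,k+1}\le1$ supported on $[t_{i-1},t_{i+1}]$ I would bound $a_{i,k+1}\le\int_{t_{i-1}}^{t_{i+1}}(\psi(t_{k+1})-\psi(\tau))^{\alpha-1}\psi'(\tau)\,d\tau$ and evaluate this by the mean value theorem, obtaining $a_{i,k+1}\le C h^\alpha(k+1-i)^{\alpha-1}$ after comparing $\psi(t_{k+1})-\psi(t_i)$ with $(k+1-i)h$; here Assumption \ref{asu-psi}(1) enters again, through $\psi'$ being bounded above and below by positive constants on $[t_0,T]$. The recursion then attains the canonical fractional shape $e_{k+1}\le C_1 h^{1+\alpha}+LCh^\alpha\sum_{i=1}^k(k+1-i)^{\alpha-1}e_i$, to which a discrete fractional Gronwall lemma of Diethelm--Ford--Freed type applies, giving $e_k\le C_1 h^{1+\alpha}\calE_\alpha\!\big(c\,(t_k-t_0)^\alpha\big)$. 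Since $t_k-t_0\le T-t_0$ the Mittag-Leffler factor is a fixed constant, whence $e_k=O(h^{1+\alpha})$ uniformly in $k$, as claimed.

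I expect the main obstacle to be exactly this weight estimate together with the Gronwall step, not the predictor/corrector bookkeeping (which the excerpt already carries out): one must check that the product-integration weights $a_{i,k+1}$ really inherit the singular $(k+1-i)^{\alpha-1}$ profile of the $\psi$-fractional kernel, and that the benign $O(h^\alpha)\max_i e_i$ remainder coming from the predictor can be folded into the Gronwall comparison without degrading the exponent.
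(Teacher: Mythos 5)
Your proposal is correct in substance, but it takes a genuinely different — and stronger — route than the paper. The paper's own proof is a plain induction on $k$: it uses exactly the coarse bounds \eqref{eqcorrector-inequalityyyyyy} and \eqref{eq:predictor-inequality} with $\sup_{i\le k}\|x(t_i)-x_i\|$ times the total weight, and closes the induction hypothesis $\|x(t_i)-x_i\|\le Ch^{1+\alpha}$ by shrinking the horizon: choosing $T$ so close to $t_0$ that the factor $L\psi'(T)^\alpha(T-t_0)^\alpha/\alpha$ (plus the $h$-dependent remainders) keeps the bracketed constant below $C$. That is precisely why the theorem is stated ``for a suitably chosen $T$,'' and the conclusion section concedes this finite-horizon limitation. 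Your criticism that the sup-bound ``destroys the rate'' is therefore accurate only for a fixed, arbitrary $T$ (where $LA_0$ need not be a contraction); under the paper's small-$T$ caveat the naive induction does close, which is all the paper claims. What your approach buys is the removal of that caveat: the kernel-profile weight estimate $a_{i,k+1}\le Ch^\alpha(k+1-i)^{\alpha-1}$ (valid since Assumption \ref{asu-psi}(1) makes $\psi'$ bounded above and below by positive constants on the compact interval) plus a Dixon/Diethelm--Ford--Freed discrete fractional Gronwall lemma yields $\|x(t_k)-x_k\|=O(h^{1+\alpha})$ uniformly on any fixed $[t_0,T]$, which is exactly the extension the paper lists as future work. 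One step you assert rather than prove deserves attention: folding the $O(h^\alpha)\max_{i\le k}e_i$ remainder into the Gronwall comparison is not immediate (the crude bound $\max_i e_i\le k^{1-\alpha}\sum_i(k+1-i)^{\alpha-1}e_i$ introduces $h^{2\alpha-1}$, which blows up for $\alpha<1/2$). Two clean fixes: either keep the weighted sum in the predictor bound as well, so that after multiplication by $a_{k+1,k+1}=O(h^\alpha)$ this term becomes $O(h^{2\alpha})\sum_i(k+1-i)^{\alpha-1}e_i$ and folds directly into the main sum for $h\le 1$; or pass to the running maxima $E_k\triangleq\max_{i\le k}e_i$, absorb $C_2h^\alpha E_{k+1}$ to the left for small $h$, and use that $j\mapsto\sum_{i=1}^{j-1}(j-i)^{\alpha-1}E_i$ is non-decreasing to reach the canonical recursion $E_{k+1}\le 2C_1h^{1+\alpha}+2Mh^\alpha\sum_{i=1}^{k}(k+1-i)^{\alpha-1}E_i$. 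With either repair your argument is complete and strictly stronger than the paper's.
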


\begin{proof}
The proof will proceed by induction on $k$. For $k = 0$, $x_0 = x(t_0)$, so the result holds automatically. Now suppose that the result holds for $i=1,\ldots,k$. Then, by \eqref{eq:predictor-inequality}, 
\begin{gather*} \|x(t_{k+1}) - x_{k+1}^P\| \le \frac{1}{\Gamma(\alpha)} \left(\sup_{t \in [t_0,t_{k+1}]} \left\| \frac{d}{dt} g(t,x(t)) \right\|
h  \frac{(\psi(t_{k+1}) - \psi(t_0))^\alpha}{\alpha}\right.\\
\left.+ L \sup_{i=1,\ldots,k}\|x(t_i) - x_i\|  \frac{(\psi(t_{k+1}) - \psi(t_0))^\alpha}{\alpha} \right)\\
\le \frac{1}{\Gamma(\alpha)} \left(\sup_{t \in [t_0,t_{k+1}]} \left\| \frac{d}{dt} g(t,x(t)) \right\|
h  \frac{(\psi(t_{k+1}) - \psi(t_0))^\alpha}{\alpha} + L C h^{1+\alpha} \frac{(\psi(t_{k+1}) - \psi(t_0))^\alpha}{\alpha} \right).
\end{gather*}
To further simplify computations, note that \begin{gather*}
\frac{(\psi(t_{k+1})-\psi(t_0))^\alpha}{\alpha} \le \psi'(T)^{\alpha}\frac{(t_{k+1} - t_0)^\alpha}{\alpha} \le \psi'(T)^{\alpha}\frac{(T - t_0)^\alpha}{\alpha},\\
\text{ and } \frac{(\psi(t_{k+1})-\psi(t_k))^\alpha}{\alpha} \le \psi'(T)^{\alpha}\frac{h^\alpha}{\alpha}.
\end{gather*}
The applying the above to \eqref{eqcorrector-inequalityyyyyy} yields
\begin{align*} \|x(t_{k+1}) - x_{k+1}\| &\le h^2 \sup_{t} \left\| \frac{d^2}{dt^2} g(t,x(t)) \right\|  \psi'(T)^{\alpha}\frac{(T - t_0)^\alpha}{\alpha}\\
& + L C h^{1+\alpha} \left(\psi'(T)^{\alpha}\frac{(T - t_0)^\alpha}{\alpha} +\psi'(T)^{\alpha}\frac{h^\alpha}{\alpha}\right)\\
& +L \psi'(T)^{\alpha}\frac{h^{1+\alpha}}{\alpha\Gamma(\alpha)} \sup_{t \in [t_0,t_{k+1}]} \left\| \frac{d}{dt} g(t,x(t)) \right\| \psi'(T)^{\alpha}\frac{(T - t_0)^\alpha}{\alpha}\\
&+L^2\frac{h^{1+2\alpha}}{\alpha\Gamma(\alpha)} C \psi'(T)^{\alpha}\frac{(T - t_0)^\alpha}{\alpha}\\
&= h^{1+\alpha} \cdot \left( h^{1-\alpha} \sup_{t} \left\| \frac{d^2}{dt^2} g(t,x(t)) \right\|  \psi'(T)^{\alpha}\frac{(T - t_0)^\alpha}{\alpha}\right.\\
& + L C \left(\psi'(T)^{\alpha}\frac{(T - t_0)^\alpha}{\alpha} +\psi'(T)^{\alpha}\frac{h^\alpha}{\alpha}\right)\\
& +L \psi'(T)^{\alpha}\frac{1}{\alpha\Gamma(\alpha)} \sup_{t \in [t_0,t_{k+1}]} \left\| \frac{d}{dt} g(t,x(t)) \right\| \psi'(T)^{\alpha}\frac{(T - t_0)^\alpha}{\alpha}\\
&\left. +L^2 \frac{h^{\alpha}}{\alpha\Gamma(\alpha)} C \psi'(T)^{\alpha}\frac{(T - t_0)^\alpha}{\alpha}\right).
\end{align*}
With a selection of $T$ suitably close to $t_0$, the term in parentheses can be kept less than $C$. Hence, the convergence rate is established.
\end{proof}

\begin{rem}
Theorem \ref{thm:convergence} requires smoothness of both $g$ and $x$ for the convergence of the numerical method. Results for fractional order differential equations using the Caputo derivative show that $x$ inherits some smoothness on $(0,T]$ from $g$ (cf. \cite{diethelm2010analysis}), and these results are expected to carry over to the present case. Moreover, through the selection of $\psi$ having higher order zeros at $t=0$, numerical experiments indicate that $x$ may be smooth over $[0,T]$ as well.
\end{rem}


\subsection{Numerical examples}

Figure \ref{fig:validation} validates the numerical method through the comparison of the numerically generated solution to \eqref{eq:linear-equation} with the known closed form solution, $y(t) = E_{\alpha}(-(\psi(t) - \psi(0))^\alpha)$. The quantity $\alpha$ was selected to as $\alpha = 1/2$, and the figure presents the solutions to \eqref{eq:linear-equation} for $\psi(t) = t, t^2, t^3, t^4.$

Presented in Figure \ref{fig:booth}, Figure \ref{fig:exponential}, and Figure \ref{fig:zakharov} are the numerical experiments utilizing the functions given in \cite{liang2019fractional}. The gradient descent methods of this manuscript were utilized with $\psi = t, t*\ln(t+1), t^2, t^4$, and for comparison the integer order gradient descent using RK4 has also been included. It can be seen that setting $\psi =  t^2$ and $\psi =  t^4$ leads to better performance than integer order gradient descent.
 
The plots given show the norm convergence of the two dimensional state to the optimal point, so the ideal is to demonstrate convergence of this quantity to zero. The spacing $h$ is indicated in the title of each figure as is the fractional order of the derivative. For each fractional numerical method, the ABM method was used with $5$ corrector steps, where the corrector was applied iteratively to improve convergence.
 
On the Booth function, $f(x) = (x_1 + 2x_2 - 7)^2 + (2x_1 + x_2 - 5)^2$, it can be seen in Figure \ref{fig:booth} that with the initial condition $(10,5)$, $\alpha = 0.8$ and $\psi = t^4$, matching convergence rates are achieved between the integer order gradient descent method and the fractional gradient descent methods of this manuscript. Of the methods employed, the Caputo approached the optimal point most slowly. For the negative radial exponential function, $f(x) = -\exp\left(\frac{1}{2} \|x\|\right)$, with initial condition $(1,5)$, there is a much more dramatic difference between the methods, where $\psi =t^4$ converges very quickly as seen in Figure \ref{fig:exponential}. The noise after the trajectory achieves the optimal point is due to the nondifferentiability of the function at that point. Finally, for the Zakharov function, $f(x) = \|x\|_2^2 +  \left(\sum_{i=1}^n 0.5 i x_i \right)^2 + \left( \sum_{i=1}^n 0.5 i x_i\right)^4$ (with $n=2$ in this case), with initial condition $(10,5)$, integer order gradient descent performs poorly compared to $\psi = t^2$ and $\psi = t^4$, where the fastest convergence is achieved by $\psi = t^4$ in Figure \ref{fig:zakharov}.

Figure \ref{fig:compare_picard} compares the performance of the Picard method to that of the ABM method of the present manuscript in the numerical solution to the initial value problem with $g(t,x) = 1-2x-x^2$, $\psi(t) = t^4$, and $\alpha = 1/2$ where a closed form solution is unavailable. Here it is seen that the ABM method with $h=0.0001$ and $5$ corrector steps performs consistently throughout the interval $[0,1]$. However, the Picard method with $8$ iterates has a sudden spike. As there is no closed form solution for this IVP, this example lacks a ground truth for direct comparison. However, it should be noted that the Picard method breaks down after $8$ iterations and cannot improve upon this error. The instability validates the use of the ABM method in the other numerical experiments over that of the Picard method.

\begin{figure}
    \centering
    \includegraphics[scale=0.35]{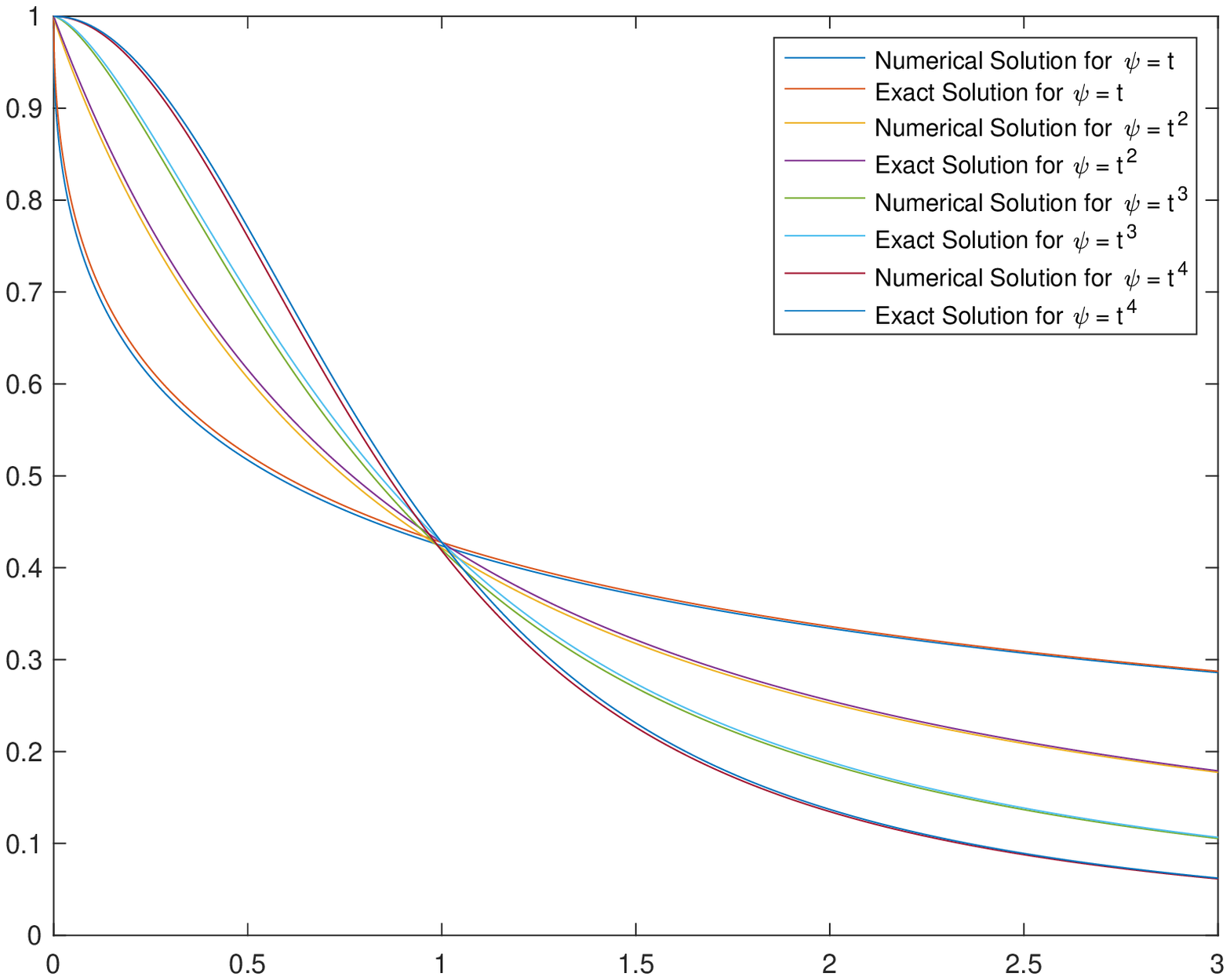}
    \includegraphics[scale=0.35]{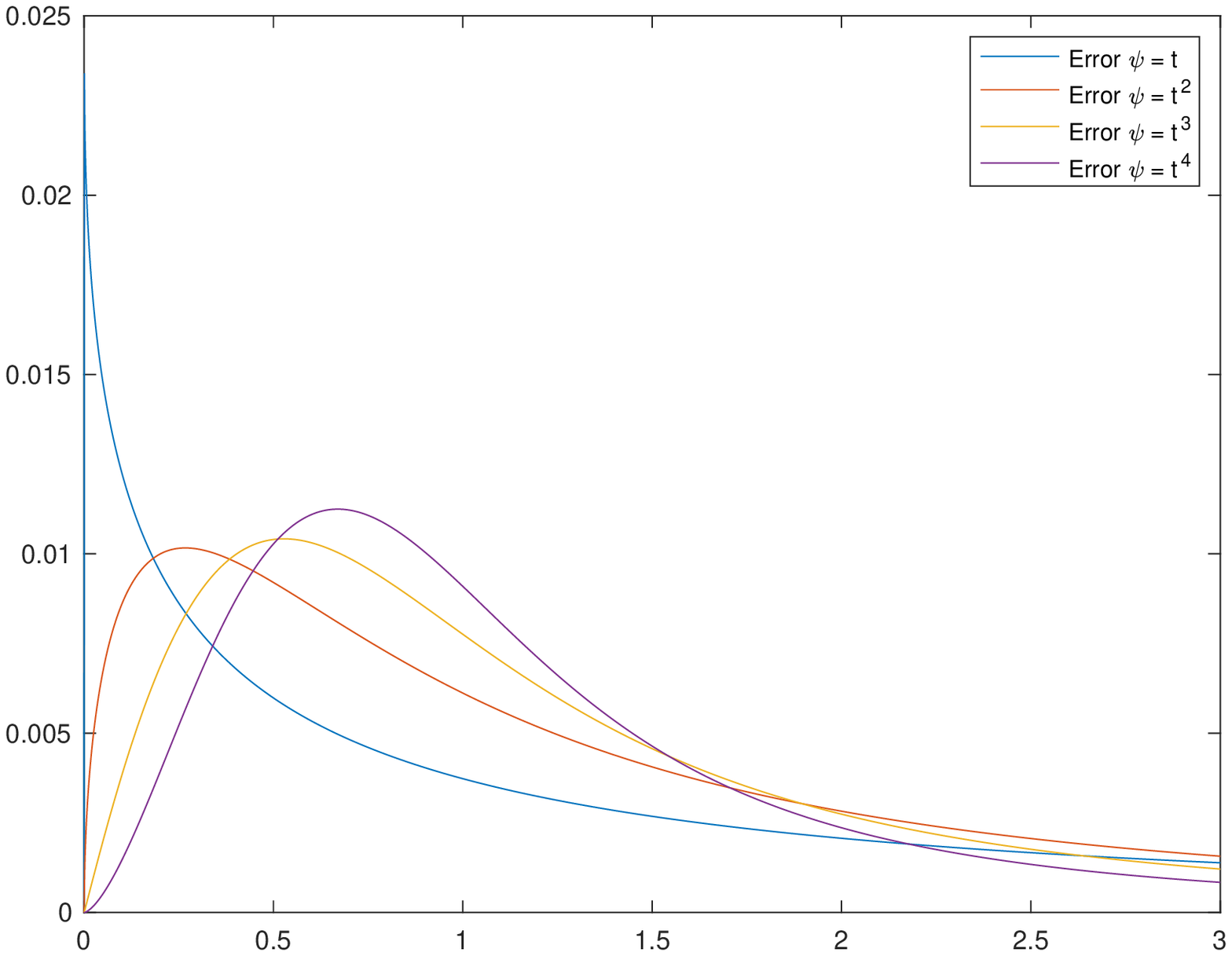}
    \caption{The figure on the left presents both the exact solutions to \eqref{eq:linear-equation} and the numerical solutions generated by the ABM method presented in this manuscript. Here $\alpha = 0.5$, $h = 0.001$, and $\psi(t) = t, t^2, t^3, t^4.$ The right figure presents plots of the respective absolute errors. The small error bounds validate the numerical method, where the selection of smaller $h$ yields smaller absolute errors (not presented).}
    \label{fig:validation}
\end{figure}

\begin{figure}
    \centering
    \includegraphics[scale=0.3]{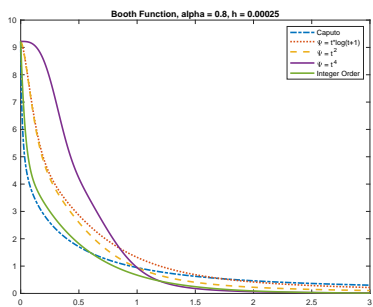}
    \caption{This figure shows the results of the gradient descent methods of this manuscript as applied to the optimization of the Booth function. This figure presents the norm difference between the known optimal point and the two dimensional state vector. Here it can be seen that the selection of $\psi = t^4$ leads to a method that matches integer order gradient descent.}
    \label{fig:booth}
\end{figure}

\begin{figure}
    \centering
    \includegraphics[scale=0.3]{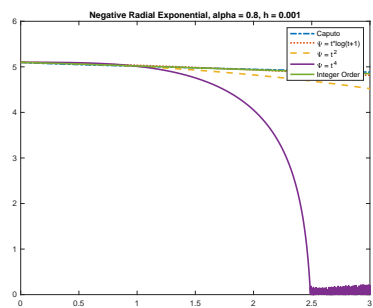}
    \caption{This figure shows the gradient descent methods of this manuscript as applied to the optimization of the negative radial exponential function. This figure presents the norm difference between the known optimal point and the two dimensional state vector.  Here it can be seen that the selection of $\psi = t^4$ gives a dramatic improvement over the other methods shown in this figure.}
    \label{fig:exponential}
\end{figure}

\begin{figure}
    \centering
    \includegraphics[scale=0.3]{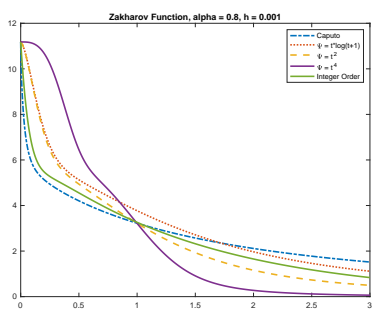}
    \caption{This figure shows the gradient descent methods of this manuscript as applied to the optimization of the Zakharov function. This figure presents the norm difference between the known optimal point and the two dimensional state vector.  Here it can be seen that the selection of $\psi = t^2$ and $\psi = t^4$ leads to a method that outperforms integer order gradient descent.}
    \label{fig:zakharov}
\end{figure}

\begin{figure}
    \centering
    \includegraphics[scale=0.3]{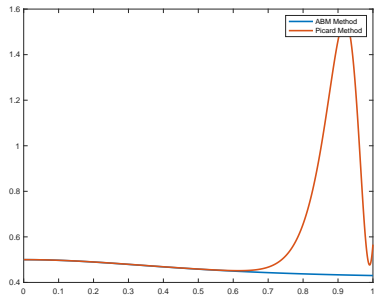}
    \caption{This figure compares the perfomance of the Picard iteration approach to solving the IVP for $g(t,x) = 1-2x-x^2$, $\psi(t) = t^4$, and $\alpha = 1/2$ with that of the ABM method developed in this manuscript. The Picard method has a sudden spike, and further iterations cause numerical errors. However, the ABM method performs consistently throughout the interval.}
    \label{fig:compare_picard}
\end{figure}
\section{Conclusions}\label{conl-sec}
In the paper, we design $\psi$-fractional derivatives based methods solving unconstrained optimization problems. The convergence analysis of these methods is carried out for both strongly convex and non-strongly convex cases. The key element of our analysis is the identification of a Lyapunov-type function, which allows to establish convergence of generated trajectories in the Riemann-Liouville as well as in the Caputo case. Chain rules and Jensen-type inequality play essential roles in the analysis of these Lyapunov functions. Numerical examples using the ABM method reveal that \emph{the fractional order $\alpha$ and weight $\psi$ are tunable parameters, which can be helpful for improving the convergence speed}. Future research may include extensions to constrained optimization problems, discrete-time methods and improvement of the convergence speed for specific problems. Moreover, the numerical methods given in Section \ref{de-num-me} generalize that of \cite{diethelm2010analysis}, and they have the same limitations where the convergence rate is valid only for a finite time horizon. Future developments for the numerical methods will be to provide a method and proof of convergence that is valid for arbitrarily large time horizons.

\section*{Acknowledgments}
We thank the Reviewers for their careful reading and insightful assessment of our work. Dr. Joel A. Rosenfeld was supported by the Air Force Office of Scientific Research (AFOSR) under contract number FA9550-20-1-0127 and the National Science Foundation under NSF Award ID 2027976.

\section*{CONFLICT OF INTEREST}
This work does not have any conflicts of interest.

\bibliographystyle{plain}
\bibliography{refs}

\begin{thebibliography}{10}

\bibitem{aguila2014lyapunov}
Norelys Aguila-Camacho, Manuel~A Duarte-Mermoud, and Javier~A Gallegos.
\newblock Lyapunov functions for fractional order systems.
\newblock {\em Communications in Nonlinear Science and Numerical Simulation},
  19(9):2951--2957, 2014.

\bibitem{almeida2017caputo}
Ricardo Almeida.
\newblock A {C}aputo fractional derivative of a function with respect to
  another function.
\newblock {\em Communications in Nonlinear Science and Numerical Simulation},
  44:460--481, 2017.

\bibitem{almeida2018fractional}
Ricardo Almeida, Agnieszka~B Malinowska, and M~Teresa~T Monteiro.
\newblock Fractional differential equations with a {C}aputo derivative with
  respect to a kernel function and their applications.
\newblock {\em Mathematical Methods in the Applied Sciences}, 41(1):336--352,
  2018.

\bibitem{BV}
S.~Boyd and L.~Vandenberghe.
\newblock {\em Convex Optimization}.
\newblock Cambridge University Press, Cambridge, 2004.

\bibitem{chen2017convex}
Weisheng Chen, Hao Dai, Yanfei Song, and Zhengqiang Zhang.
\newblock Convex {L}yapunov functions for stability analysis of fractional
  order systems.
\newblock {\em IET Control Theory $\&$ Applications}, 11(7):1070--1074, 2017.

\bibitem{chen2017study}
Yuquan Chen, Qing Gao, Yiheng Wei, and Yong Wang.
\newblock Study on fractional order gradient methods.
\newblock {\em Applied Mathematics and Computation}, 314:310--321, 2017.

\bibitem{coddington1955theory}
Earl~A Coddington and Norman Levinson.
\newblock {\em Theory of ordinary differential equations}.
\newblock Tata McGraw-Hill Education, 1955.

\bibitem{diethelm2010analysis}
Kai Diethelm.
\newblock {\em The Analysis of Fractional Differential Equations: An
  Application-Oriented Exposition Using Differential Operators of Caputo Type}.
\newblock Springer-Verlag, Berlin, 2010.

\bibitem{dixon2013nonlinear}
Warren~E Dixon, Aman Behal, Darren~M Dawson, and Siddharth~P Nagarkatti.
\newblock {\em Nonlinear control of engineering systems: a Lyapunov-based
  approach}.
\newblock Springer Science \& Business Media, 2013.

\bibitem{gao2017general}
Feng Gao.
\newblock General fractional calculus in non-singular power-law kernel applied
  to model anomalous diffusion phenomena in heat transfer problems.
\newblock {\em Thermal Science}, 21(suppl. 1):11--18, 2017.

\bibitem{gorenflo2014mittag}
R.~Gorenflo, A.~A. Kilbas, F.~Mainardi, and S.~V. Rogosin.
\newblock {\em {M}ittag-{L}effler functions, Related Topics and Applications}.
\newblock Springer Monogr. Math., Springer, Heidelberg. Springer, 2014.

\bibitem{jarad2019generalized}
Fahd Jarad and Thabet Abdeljawad.
\newblock Generalized fractional derivatives and {L}aplace transform.
\newblock {\em Discrete $\&$ Continuous Dynamical Systems-S}, pages 1775--1786,
  2019.

\bibitem{kamalapurkar2018reinforcement}
Rushikesh Kamalapurkar, Patrick Walters, Joel Rosenfeld, and Warren Dixon.
\newblock {\em Reinforcement learning for optimal feedback control}.
\newblock Springer, 2018.

\bibitem{kilbas2006north}
Anatoly~A Kilbas, Hari~M Srivastava, and Juan~J Trujillo.
\newblock {\em Theory and Applications of Fractional Differential Equations},
  volume 204.
\newblock North-Holland mathematics studies, 2006.

\bibitem{liang2019fractional}
Shu {Liang}, Leyi {Wang}, and George {Yin}.
\newblock {Fractional differential equation approach for convex optimization
  with convergence rate analysis}.
\newblock {\em {Optim. Lett.}}, 14(1):145--155, 2020.

\bibitem{liu2020generalized}
Jian-Gen Liu, Xiao-Jun Yang, Yi-Ying Feng, and Hong-Yi Zhang.
\newblock On the generalized time fractional diffusion equation: Symmetry
  analysis, conservation laws, optimal system and exact solutions.
\newblock {\em IJGMM}, 17(1):2050013--28, 2020.

\bibitem{liu2016asymptotical}
Song Liu, Xiang Wu, Xian-Feng Zhou, and Wei Jiang.
\newblock Asymptotical stability of {R}iemann--{L}iouville fractional nonlinear
  systems.
\newblock {\em Nonlinear Dynamics}, 86(1):65--71, 2016.

\bibitem{YN}
Y.~Nesterov.
\newblock {\em Introductory Lectures on Convex Optimization. A basic course}.
\newblock Applied Optimization, 87. Kluwer Academic Publishers, Boston, MA,
  2004.

\bibitem{rosenfeld2017approximating}
Joel~A Rosenfeld and Warren~E Dixon.
\newblock Approximating the {C}aputo fractional derivative through the
  {M}ittag-{L}effler reproducing kernel {H}ilbert space and the kernelized
  {A}dams--{B}ashforth--{M}oulton method.
\newblock {\em SIAM Journal on Numerical Analysis}, 55(3):1201--1217, 2017.

\bibitem{ruszczynski2006nonlinear}
Andrzej~P Ruszczy{\'n}ski.
\newblock {\em Nonlinear optimization}, volume~13.
\newblock Princeton university press, 2006.

\bibitem{schneider1996completely}
W.~R. Schneider.
\newblock Completely monotone generalized {M}ittag-{L}effler functions.
\newblock {\em Exposition. Math.}, 14:3--24, 1996.

\bibitem{scieur2017integration}
Damien Scieur, Vincent Roulet, Francis Bach, and Alexandre d'Aspremont.
\newblock Integration methods and optimization algorithms.
\newblock In {\em Advances in Neural Information Processing Systems}, pages
  1109--1118, 2017.

\bibitem{valerio2013fractional}
Duarte Val{\'e}rio, Juan~J Trujillo, Margarita Rivero, JA~Tenreiro Machado, and
  Dumitru Baleanu.
\newblock Fractional calculus: A survey of useful formulas.
\newblock {\em The European Physical Journal Special Topics},
  222(8):1827--1846, 2013.

\bibitem{wei2020generalization}
Yiheng Wei, Yu~Kang, Weidi Yin, and Yong Wang.
\newblock Generalization of the gradient method with fractional order gradient
  direction.
\newblock {\em Journal of the Franklin Institute}, 2020.

\bibitem{yang2017new}
Xiao-Jun Yang.
\newblock New rheological problems involving general fractional derivatives
  with nonsingular power-law kernels.
\newblock {\em Proceedings of the Romanian academy series A-mathematics physics
  technical sciences information science}, 19(01):45--52, 2018.

\bibitem{zbMATH07051303}
Xiao-Jun {Yang}.
\newblock {\em {General fractional derivatives. Theory, methods and
  applications}}.
\newblock Boca Raton, FL: CRC Press, 2019.

\bibitem{yang2019newgeneralcalculi}
Xiao-Jun Yang.
\newblock New general calculi with respect to another functions applied to
  describe the {N}ewton-like dashpot models in anomalous viscoelasticity.
\newblock {\em Thermal Science}, 23(6B):3751--3757, 2019.

\bibitem{yang2019newnonconventional}
Xiao-Jun Yang.
\newblock New non-conventional methods for quantitative concepts of anomalous
  rheology.
\newblock {\em Thermal Science}, 23(6B):4117--4127, 2019.

\bibitem{yang2019newmathematicalmodels}
Xiao-Jun Yang, Feng Gao, and Hong-Wen Jing.
\newblock New mathematical models in anomalous viscoelasticity from the
  derivative with respect to another function view point.
\newblock {\em Thermal Science}, 23(3A):1555--1561, 2019.

\bibitem{yang2017exactlocal}
Xiao-Jun Yang, Feng Gao, and Hari~M Srivastava.
\newblock Exact travelling wave solutions for the local fractional
  two-dimensional {B}urgers-type equations.
\newblock {\em Computers $\&$ Mathematics with Applications}, 73(2):203--210,
  2017.

\bibitem{yang2017newlocal}
Xiao-Jun Yang, Feng Gao, and HM~Srivastava.
\newblock New rheological models within local fractional derivative.
\newblock {\em Rom. Rep. Phys}, 69(3):113, 2017.

\bibitem{yang2018newcomputational}
Xiao-Jun Yang, Feng Gao, and HM~Srivastava.
\newblock A new computational approach for solving nonlinear local fractional
  {PDE}s.
\newblock {\em Journal of Computational and Applied Mathematics}, 339:285--296,
  2018.

\bibitem{zbMATH07092994}
Xiao-Jun {Yang}, Feng {Gao}, and Ju~{Yang}.
\newblock {\em {General fractional derivatives with applications in
  viscoelasticity}}.
\newblock Amsterdam: Elsevier/Academic Press, 2020.

\bibitem{yang2015new}
Xiao-Jun Yang, Hari~M Srivastava, and JA~Machado.
\newblock A new fractional derivative without singular kernel: application to
  the modelling of the steady heat flow.
\newblock {\em Thermal Science}, 20(02):753--756, 2016.

\end{thebibliography}
\end{document}